\newcommand\sign{\on{sign}}
\newtheorem{theorem}{Theorem}[section]
\newtheorem{corollary}[theorem]{Corollary}
\newtheorem{proposition}[theorem]{Proposition}
\newtheorem{lemma}[theorem]{Lemma}
\theoremstyle{definition}    
\newtheorem{definition}[theorem]{Definition}
\theoremstyle{remark}
\newtheorem{remark}[theorem]{Remark}
\newtheorem{remarks}[theorem]{Remarks}
\newtheorem{example}[theorem]{Example}
\newtheorem{examples}[theorem]{Examples}
\newcommand{\ts}{\textstyle}
\newcommand{\da}{\dasharrow}
\newcommand{\pt}{\operatorname{pt}}
\newcommand{\cox}{\mathsf{h}^\vee}
\newcommand\A{\mathcal{A}}
\newcommand{\K}{\mathbb{K}}
\newcommand{\V}{\mathcal{V}}
\newcommand{\UU}{\mathbb{U}}
\newcommand{\VV}{\mathbb{V}}
\newcommand{\XX}{\mathbb{X}}
\renewcommand{\L}{\mathcal{L}}
\newcommand{\T}{\mathbb{T}}
\newcommand{\Co}{\mathcal{C}}
\newcommand{\ca}{\mathcal}
\newcommand{\U}{\on{U}}
\newcommand{\E}{\ca{E}}
\renewcommand{\S}{\mathcal{S}}
\newcommand{\X}{\mathcal{X}}
\newcommand{\F}{\mathcal{F}}
\newcommand{\R}{\mathbb{R}}
\newcommand{\C}{\mathbb{C}}
\newcommand{\Z}{\mathbb{Z}}
\renewcommand{\P}{\ca{P}}
\newcommand{\Gr}{\on{Gr}}
\newcommand\lie[1]{\mathfrak{#1}}
\newcommand{\g}{\lie{g}}
\newcommand{\on}{\operatorname}
\newcommand{\Ad}{ \on{Ad} }
\newcommand{\ad}{\on{ad}}
\newcommand{\res}{{\on{res}}}
\renewcommand{\ker}{ \on{ker}}
\newcommand{\Spin}{ \on{Spin}}
\newcommand{\SO}{ \on{SO}}
\newcommand{\tpi}{{2\pi\i}}
\renewcommand{\i}{i}
\newcommand\qu{/\kern-.7ex/} % Categorical quotients
\newcommand{\fus}{{\on{fus}}}
\newcommand{\red}{{\on{red}}}
\newcommand{\hra}{\hookrightarrow}
\renewcommand{\d}{{\mbox{d}}}
\newcommand{\ol}{\overline}
\newcommand\Phinv{\Phi^{-1}}
\newcommand\Lag{\on{Lag}}
\newcommand\Sig{\Sigma}
\newcommand\sig{\sigma}
\newcommand\eps{\epsilon}
\newcommand\Om{\Omega}
\newcommand\om{\omega}
\newcommand{\f}{\frac}
\newcommand{\p}{\partial}
\renewcommand{\l}{\langle}
\renewcommand{\r}{\rangle}
\newcommand\hh{{\f{1}{2}}}
\newcommand{\ti}{\tilde}
\newcommand{\eeq}{\end{eqnarray*}}
\newcommand{\beq}{\begin{eqnarray*}}
\renewcommand{\H}{\ca{H}}
\newcommand{\Cl}{{\C\on{l}}}
\newcommand{\pr}{\on{pr}}
\newcommand{\wh}{\widehat}
\newcommand{\wt}{\widetilde}
\newcommand{\mf}{\mathfrak}
\begin{document}
%\sloppy

\title[]{Dirac structures and\\ Dixmier-Douady bundles}
\author{A. Alekseev}
\address{University of Geneva, Section of Mathematics,
2-4 rue du Li\`evre, 1211 Gen\`eve 24, Switzerland}
\email{alekseev@math.unige.ch}

\author{E. Meinrenken}
\address{University of Toronto, Department of Mathematics,
40 St George Street, Toronto, Ontario M4S2E4, Canada }
\email{mein@math.toronto.edu}
\date{\today}
\begin{abstract}
  A Dirac structure on a vector bundle $V$ is a maximal isotropic
  subbundle $E$ of the direct sum $V\oplus V^*$. We show how to
  associate to any Dirac structure a Dixmier-Douady bundle $\A_E$,
  that is, a $\Z_2$-graded bundle of $C^*$-algebras with typical fiber
  the compact operators on a Hilbert space. The construction has good
  functorial properties, relative to Morita morphisms of
  Dixmier-Douady bundles. As applications, we show that the
  Dixmier-Douady bundle $\A_G^{\on{Spin}}\to G$ over a compact,
  connected Lie group (as constructed by Atiyah-Segal) is
  multiplicative, and we obtain a canonical `twisted
  $\Spin_c$-structure' on spaces with group valued moment maps.
\end{abstract}

\maketitle

\begin{quote}
  {\it \small Dedicated to Richard Melrose on the occasion of his 60th
    birthday.}
\end{quote}
\vskip1cm

\maketitle
\tableofcontents

\section{Introduction}
A classical result of Dixmier and Douady \cite{di:ch} states that
 the degree three cohomology group $H^3(M,\Z)$ classifies 
Morita isomorphism classes of $C^*$-algebra bundles $\A\to M$, with
typical fiber $\K(\H)$ the compact operators on a Hilbert space. Here a
Morita isomorphism $\E\colon \A_1\da \A_2$ is a bundle $\E\to M$ of bimodules ,
locally modeled on the $\K(\H_2)-\K(\H_1)$ bimodule $\K(\H_1,\H_2)$.
Dixmier-Douady bundles $\A\to M$ may be regarded as higher analogues
of line bundles, with Morita isomorphisms 
replacing line bundle isomorphisms. An important example of
a Dixmier-Douady bundle is the Clifford algebra bundle of a Euclidean vector
bundle of even rank; a Morita isomorphism $\Cl(V)\da \C$ amounts to a
$\Spin_c$-structure on $V$.

Given a Dixmier-Douady bundle $\A\to M$, one has the twisted
$K$-homology group $K_0(M,\A)$, defined as the $K$-homology of the
$C^*$-algebra of sections of $\A$ (see Rosenberg \cite{ros:co}).
Twisted $K$-homology is a covariant functor relative to morphisms
\[(\Phi,\E)\colon \A_1\da \A_2,\] 
given by a proper map $\Phi\colon M_1\to M_2$ and a Morita isomorphism
$\E\colon \A_1\da \Phi^*\A_2$. For example, if $M$ is an
even-dimensional Riemannian manifold, the twisted $K$-group
$K_0(M,\Cl(TM))$ contains a distinguished \emph{Kasparov fundamental
class} $[M]$, and in order to push this class forward under the map
$\Phi\colon M\to \pt$ one needs a Morita morphism 
$\Cl(TM)\da \C$,
i.e. a $\Spin_c$-structure on $M$. The push-forward $\Phi_*[M]\in
K_0(\pt)=\Z$ is then the index of the associated $\Spin_c$-Dirac
operator. Similarly, if $\A\to G$ is a Dixmier-Douady bundle over a
Lie group, the definition of a `convolution product' on $K_0(G,\A)$ as
a push-forward under group multiplication $\on{mult}\colon G\times
G\to G$ requires an associative Morita morphism $(\on{mult},\E)\colon
\pr_1^*\A\otimes\pr_2^*\A\da \A$.

In this paper, we will relate the Dixmier-Douady theory to
Dirac geometry. A (linear) \emph{Dirac structure} $(\VV,E)$ over $M$
is a vector bundle $V\to M$ together with a subbundle 
\[ E\subset \VV:=V\oplus V^*,\] such that $E$ is maximal isotropic relative to the
natural symmetric bilinear form on $\VV$. Obvious examples of Dirac
structures are $(\VV,V)$ and $(\VV,V^*)$. 

One of the main results of this paper is the construction of a
\emph{Dirac-Dixmier-Douady functor}, associating to any Dirac
structure $(\VV,E)$ a Dixmier-Douady bundle $\A_E$, and to every
`strong' morphism of Dirac structures $(\VV,E)\da (\VV',E')$ a Morita
morphism $\A_E\da \A_{E'}$.

The Dixmier-Douady bundle $\A_{V^*}$ is canonically Morita trivial,
while $\A_V$ (for $V$ of even rank) is canonically Morita isomorphic to
$\Cl(V)$. An interesting example of a Dirac structure is the
Cartan-Dirac structure $(\T G,E)$ for a compact Lie group $G$. 
The Cartan-Dirac structures is multiplicative, in the sense that there
exists a distinguished Dirac morphism
\begin{equation}\label{eq:dirac1}
 (\T G,E)\times (\T G,E)\da (\T G,E)
\end{equation}
(with underlying map the group multiplication). The
associated Dixmier-Douady bundle $\A_E=:\A_G^{\on{spin}}$ is related
to the spin representation of the loop group $LG$. This bundle (or
equivalently the corresponding bundle of projective Hilbert spaces)
was described by Atiyah-Segal \cite[Section 5]{at:twi}, and plays a
role in the work of Freed-Hopkins-Teleman \cite{fr:lo2}.  As an
immediate consequence of our theory, the Dirac morphism \eqref{eq:dirac1}
gives rise to a Morita morphism
\begin{equation}\label{eq:morita}
(\on{mult},\E)\colon \pr_1^*\A_G^{\on{Spin}}\otimes \pr_2^*\A_G^{\on{Spin}}\da \A_G^{\on{Spin}}.
\end{equation}

Another class of examples comes from the theory of quasi-Hamiltonian
$G$-spaces, that is, spaces with $G$-valued moment maps $\Phi\colon
M\to G$ \cite{al:mom}. Typical examples of such spaces are products of
conjugacy classes in $G$.  As observed by Bursztyn-Crainic
\cite{bur:di}, the structure of a
quasi-Hamiltonian space on $M$ defines a strong Dirac morphism $(\T
M,TM)\da (\T G,E)$ to the Cartan-Dirac structure. Therefore, our theory 
gives a Morita morphism $\A_{TM}\da \A_G^{\on{Spin}}$. On the other
hand, as remarked above $\A_{TM}$ is canonically Morita isomorphic to 
the Clifford bundle $\Cl(TM)$, provided  $\dim M$ is even (this
is automatic if $G$ is connected). One may think of the resulting 
Morita morphism
\begin{equation}\label{eq:spin}
 \Cl(TM)\da  \A_G^{\on{Spin}}
\end{equation}
(with underlying map $\Phi$) as a `twisted $\Spin_c$-structure' on $M$ (following the terminology
of Bai-Lin Wang \cite{wan:ge} and Douglas \cite{dou:twi}).
%The Morita morphism \eqref{eq:spin} is
%$G$-equivariant, hence defines a push-forward map in $G$-equivariant
%twisted $K$-homology. One hence obtains an element $\Phi_*[M]\in
%K_0^G(G,\A_G^{\on{Spin}})$.  If $G$ is compact, simple and simply
%connected, this $K$-group is just $\Z$ (\cite{fr:lo1}, see also
%\cite{me:con}), and the resulting invariant is the quasi-Hamiltonian
%analogue of the arithmetic genus of a symplectic manifold.
In a forthcoming paper \cite{me:prep}, we will define a
\emph{pre-quantization} of $M$ \cite{sha:pre,xu:mom} in terms of a
$G$-equivariant Morita morphism $(\Phi,\E)\colon \C\da
\A^{\on{preq}}_G$.  Tensoring with \eqref{eq:spin}, one obtains a
push-forward map in equivariant twisted $K$-homology
\[\Phi_*\colon K_0^G(M,\Cl(TM))\to
K_0^G(G,\A^{\on{preq}}_G\otimes \A_G^{\on{Spin}}).\] 
For $G$ compact, simple and simply connected, the
Freed-Hopkins-Teleman theorem \cite{fr:lo1,fr:lo2} identifies the
target of this map as the fusion ring (Verlinde algebra) $R_k(G)$,
where $k$ is the given level.  The element $\mathcal{Q}(M)=\Phi_*[M]$
of the fusion ring will be called the \emph{quantization} of the
quasi-Hamiltonian space. We will see in \cite{me:prep} that its
properties are similar to the geometric quantization of Hamiltonian
$G$-spaces.

The organization of this paper is as follows. In Section \ref{sec:dir}
we consider Dirac structures and morphisms on vector bundles, and some
of their basic examples.
We observe that any Dirac morphism defines
a path of Dirac structures inside a larger bundle. We introduce
the `tautological' Dirac structure over the orthogonal group and show
that group multiplication lifts to a Dirac morphism. Section
\ref{sec:dix} gives a quick review of some Dixmier-Douady theory.  In
Section \ref{sec:fam} we give a detailed construction of 
Dixmier-Douady bundles from families of skew-adjoint real Fredholm
operators. In Section \ref{sec:ddd} we observe that any Dirac structure on a
Euclidean vector bundle gives such a family of skew-adjoint real
Fredholm operators, by defining a family of boundary conditions for
the operator $\f{\p}{\p t}$ on the interval $[0,1]$. Furthermore, to any Dirac morphism we
associate a Morita morphism of the Dixmier-Douady bundles, and we show
that this construction has good functorial properties. In Section
\ref{sec:qham} we describe the construction of twisted
$\Spin_c$-structures for quasi-Hamiltonian $G$-spaces. In Section
\ref{sec:loop}, we show that the associated Hamiltonian loop group
space carries a distinguished `canonical line bundle', generalizing
constructions from \cite{fr:lg} and \cite{me:can}.

\vskip.2in

{\bf Acknowledgments.} It is a pleasure to thank Gian-Michel Graf,
Marco Gualtieri and Nigel Higson for useful comments and discussion.
Research of A.A. was supported by the grants 200020-120042 and
200020-121675 of the Swiss National Science Foundation. 
E.M was supported by an NSERC Discovery Grant and a Steacie
Fellowship.

\section{Dirac structures and Dirac morphisms}\label{sec:dir}
We begin with a review of linear Dirac structures on vector spaces and
on vector bundles \cite{al:pur, bur:gauge}. In this paper, we will not
consider any notions of integrability.
\subsection{Dirac structures}
For any vector space $V$, the direct sum $\VV=V\oplus V^*$ carries a
non-degenerate symmetric bilinear form extending the pairing between
$V$ and $V^*$,
\[ \l x_1,x_2\r=\mu_1(v_2)+\mu_2(v_1),\ \ \ x_i=(v_i,\mu_i).\]
A \emph{morphism} $(\Theta,\om)\colon \VV\da \VV'$ is a linear map
$\Theta\colon V\to V'$ together with a 2-form $\om\in \wedge^2 V^*$.
The composition of two morphisms $(\Theta,\om)\colon \VV\da \VV'$ and
$(\Theta',\om')\colon \VV'\da \VV''$ is defined as follows:
\[ (\Theta',\om')\circ (\Theta,\om)=(\Theta'\circ\Theta,\om+\Theta^*\om').\]
Any morphism $(\Theta,\om)\colon \VV\da \VV'$ defines a relation between
elements of $\VV,\VV'$ as follows:
\[ (v,\alpha)\sim_{(\Theta,\om)}(v',\alpha')\ \Leftrightarrow\
v'=\Theta(v),\ \alpha=\iota_v\om+\Theta^*\alpha'.\]
Given a subspace $E\subset \VV$, we define its \emph{forward image} to
be the set of all $x'\in \VV'$ such that $x\sim_{(\Theta,\om)} x'$ for
some $x\in E$. For instance, $V^*$ has forward image equal to $(V')^*$.
Similarly, the \emph{backward image} of a subspace $E'\subset \VV'$ is the set
of all $x\in \VV$ such that $x\sim_{(\Phi,\om)} x'$ for some $x'\in
E'$. The backward image of $\{0\}\subset \VV'$ is denoted
$\ker(\Theta,\om)$, and the forward image of $\VV$ is denoted
$\on{ran}(\Theta,\om)$.

A subspace $E$ is called \emph{Lagrangian} if it is maximal isotropic,
i.e. $E^\perp=E$. Examples are $V,V^*\subset \VV$.  
%Morphisms preserve inner products in the sense that
%$ x_i\sim_{(\Theta,\om)}y_i,\ i=1,2\ \ \Rightarrow \ \ \l x_1,x_2\r=\l
%y_1,y_2\r$. 
The forward image of a Lagrangian subspace $E\subset \UU$ under a
Dirac morphism $(\Theta,\om)$ is again Lagrangian. On the set of
Lagrangian subspaces with $E\cap \ker(\Theta,\om)=0$, the forward
image depends continuously on $E$.  The choice of a Lagrangian
subspace $E\subset \VV$ defines a (linear) \emph{Dirac structure},
denoted $(\VV,E)$ . We say that $(\Theta,\om)$ defines a \emph{Dirac}
morphism
\begin{equation}\label{eq:dirac}
 (\Theta,\om)\colon (\VV,E)\da
(\VV',E')\end{equation}
if $E'$ is the \emph{forward image} of $E$, and a
\emph{strong} Dirac morphism if furthermore $E\cap
\ker(\Theta,\om)=0$. 
%that is, any $x'\in E'$ is related to a unique element $x\in E$. 
The composition of strong Dirac morphisms is again a
strong Dirac morphism.
\begin{examples}\label{ex:mor}
\begin{enumerate}
\item Every morphism $(\Theta,\om)\colon \VV\da \VV'$ defines a strong
      Dirac morphism $(\VV,V^*)\da (\VV',(V')^*)$.
\item The zero Dirac morphism $(0,0)\colon (\VV,E)\da (0,0)$ is strong
      if and only if $E\cap V=0$. 
%\item Any 2-form $\om$ defines a strong Dirac morphism 
%      $(\on{id},\om)\colon (\UU,U)\da (\UU,\on{Gr}_{-\om})$ where 
%        $\on{Gr}_{-\om}=\{(u,-\iota_u\om)|u\in U\}$.
\item\label{it:sym} Given vector spaces $V,V'$, any 2-form $\om\in
      \wedge^2 V^*$ defines a Dirac morphism
      $(0,\om)\colon (\VV,V)\da (\VV',(V')^*)$. It is a \emph{strong}
      Dirac morphism if and only if $\omega$ is non-degenerate. (This
      is true in particular if $V'=0$.)
\item If $E=V$, a Dirac morphism $(\Theta,\om)\colon (\VV,V)\da
      (\VV',E')$ is strong if and only if $\ker(\om)\cap
      \ker(\Theta)=0$. 
\end{enumerate}
\end{examples}

\subsection{Paths of Lagrangian subspaces}\label{subsec:morhom}
The following observation will be used later on. Suppose
\eqref{eq:dirac} is a strong Dirac morphism. 
Then there is a distinguished path connecting the subspaces
\begin{equation}\label{eq:values}
E_0=E\oplus (V')^*,\ \  E_1=V^*\oplus E',\ \ 
\end{equation}
of $\VV\oplus \VV'$, as follows. Define a family of morphisms
$(j_t,\om_t)\colon \VV\da \VV\oplus \VV'$ interpolating between
$(\on{id}\oplus 0,0)$ and $(0\oplus \Theta,\om)$:
\[ j_t(v)=((1-t)v,t\Theta(v)),\ \ \om_t=t\om.\]
Then 
\[ \ker(j_t,\om_t)=\begin{cases} 0 & t\not=1,\\
\ker(\Theta,\om) & t=0.
\end{cases}\]
Since $(\Theta,\om)$ is a strong
Dirac morphism, it follows that $E$ is transverse to $\ker(j_t,\om_t)$
for all $t$. Hence the forward images $E_t\subset \VV\oplus \VV'$ under
$(j_t,\om_t)$ are a continuous path of Lagrangian subspaces, taking
on the values \eqref{eq:values} for $t=0,1$. We will refer to $E_t$ as
the \emph{standard path} defined by the Dirac morphism \eqref{eq:dirac}. 

Given another strong
Dirac morphism $(\Theta',\om')\colon (\VV',E')\da (\VV'',E'')$, define
a 2-parameter family of morphisms $(j_{tt'},\om_{tt'})\colon
\VV\da \VV\oplus \VV'\oplus \VV''$ by
\[ j_{tt'}(v)=\big((1-t-t')v,t\Theta(v),t'\Theta'(\Theta(v))\big),\ \ \ 
\om_{tt'}=t\om+t'(\om+\Theta^*\om')\]
Then 
\[ \ker(j_{tt'},\om_{tt'})=\begin{cases} 0 & t+t'\not=1\\
  \ker(\Theta,\om) & t+t'=1,\ t\not=0,\\ 
\ker((\Theta',\om')\circ(\Theta,\om)),& t=0,\ t'=1\end{cases}.\] 
In all cases, $\ker(j_{tt'},\om_{tt'})\cap E=0$, hence we obtain a
continuous 2-parameter family of Lagrangian subspaces $E_{tt'}\subset 
\VV\oplus \VV'\oplus \VV''$ by taking the forward images of $E$. 
We have,
\[ E_{00}=E\oplus (V')^*\oplus (V'')^*,\ \ E_{10}=V^*\oplus E'\oplus (V'')^*,\ E_{01}=V^*\oplus
(V')^*\oplus E''\] 
Furthermore, the path $E_{s0}$ (resp. $E_{0s}$, $E_{1-s,s}$) is  
the direct sum of $(V'')^*$ (resp. of $(V')^*$, $V^*$)
with the standard path defined by $(\Theta,\om)$
(resp. by $(\Theta',\om')\circ(\Theta,\om)$, $(\Theta',\om')$.)

\subsection{The parity of a Lagrangian subspace}
Let $\on{Lag}(\VV)$ be the \emph{Lagrangian Grassmannian} of $\VV$,
i.e. the set of Lagrangian subspaces $E\subset \VV$. It is a
submanifold of the Grassmannian of subspaces of dimension $\dim V$.
$\on{Lag}(\VV)$ has two connected components, which are distinguished
by the $\mod 2$ dimension of the intersection $E\cap V$. We will say
that $E$ has \emph{even} or \emph{odd parity}, depending on whether
$\dim(E\cap V)$ is even or odd. The parity is preserved under strong
Dirac morphisms:
\begin{proposition}\label{prop:evenif}
  Let $(\Theta,\om)\colon (\VV,E)\da (\VV',E')$ be a strong Dirac
  morphism. Then the parity of $E'$ coincides with that of $E$. 
\end{proposition}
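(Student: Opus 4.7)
The plan is to exploit the standard path from Section~\ref{subsec:morhom}: since $(\Theta,\om)$ is a strong Dirac morphism, it produces a continuous path $E_t$ of Lagrangian subspaces in $\VV\oplus\VV'$ with the boundary values
\[ E_0=E\oplus (V')^*,\qquad E_1=V^*\oplus E'. \]
Because parity is a connected-component invariant of the Lagrangian Grassmannian, it suffices to check that the parity of $E_0$ in $\Lag(\VV\oplus\VV')$ equals the parity of $E$, and that the parity of $E_1$ equals the parity of $E'$.

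First I would identify the ``$V$-part'' of the big ambient space as $V\oplus V'\subset \VV\oplus\VV'$, so that parity in $\Lag(\VV\oplus\VV')$ is governed by $\dim(\,\cdot\cap (V\oplus V'))\bmod 2$. A direct computation then gives
\[ E_0\cap (V\oplus V')=(E\cap V)\oplus 0,\qquad E_1\cap (V\oplus V')=0\oplus (E'\cap V'), \]
since $(V')^*\cap V'=0$ and $V^*\cap V=0$. Therefore $\dim(E_0\cap (V\oplus V'))=\dim(E\cap V)$ and $\dim(E_1\cap (V\oplus V'))=\dim(E'\cap V')$, showing that parity of $E_0$ equals parity of $E$, while parity of $E_1$ equals parity of $E'$.

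Finally, the path $t\mapsto E_t$ lies entirely in $\Lag(\VV\oplus\VV')$ and is continuous (this is precisely the content of Section~\ref{subsec:morhom}, guaranteed by the strongness hypothesis $E\cap\ker(\Theta,\om)=0$). Since $\Lag(\VV\oplus\VV')$ has two connected components distinguished by parity, $E_0$ and $E_1$ must lie in the same component, so their parities agree, giving parity$(E)=$ parity$(E')$.

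The only potentially subtle point is confirming that the standard path genuinely stays inside $\Lag(\VV\oplus\VV')$ and not merely in the larger Grassmannian -- but this was already established in Section~\ref{subsec:morhom} via the transversality of $E$ to $\ker(j_t,\om_t)$ for all $t$, which is exactly where the strong Dirac hypothesis is consumed. Everything else is a one-line dimension count, so I do not expect any further obstacles.
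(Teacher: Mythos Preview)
Your argument is correct and is essentially the paper's own proof: the paper also uses the standard path $E_t$ from Section~\ref{subsec:morhom} to conclude that $E_0=E\oplus(V')^*$ and $E_1=V^*\oplus E'$ lie in the same component of $\Lag(\VV\oplus\VV')$, after noting that these have the same parities as $E$ and $E'$ respectively. You have simply made explicit the dimension count that the paper leaves as ``clearly''.
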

\begin{proof}
Clearly, $E$ has the same parity as $E_0=E\oplus (V')^*$, while 
$E'$ has the same parity as $E_1=V^*\oplus E'$. But the Lagrangian
subspaces $E_0,E_1\subset \VV\oplus \VV'$ have the same parity since
they are in the same path component of $\on{Lag}(\VV\oplus \VV')$. 
\end{proof}

\subsection{Orthogonal transformations}\label{subsec:orth1}
Suppose $V$ is a Euclidean vector space, with inner product $B$. Then
the Lagrangian Grassmannian $\on{Lag}(\VV)$ is isomorphic to
the orthogonal group of $V$, by the map associating to $A\in \on{O}(V)$ the Lagrangian
subspace
\[ E_A=\{((I-A^{-1})v,\ (I+A^{-1}) \ts\f{v}{2})|\ v\in V\}. \]
Here $B$ is used to identify $V^*\cong V$, and the factor of $\hh$ in
the second component is introduced to make our conventions consistent
with \cite{al:pur}. For instance,
\[ E_{-I}=V,\ \ E_I=V^*,\ \ E_{A^{-1}}=(E_A)^{\on{op}}\]
where we denote $E^{\on{op}}=\{(v,-\alpha)|\ (v,\alpha)\in E\}$.  It
is easy to see that the Lagrangian subspaces corresponding to
$A_1,A_2$ are transverse if and only if $A_1-A_2$ is invertible; more
generally one has $E_{A_1}\cap E_{A_2}\cong \ker(A_1-A_2)$.  As a
special case, taking $A_1=A,\ A_2=-I$ it follows that the parity of a
Lagrangian subspace $E=E_A$ is determined by $\det(A)=\pm 1$.

\begin{remark}
The definition of $E_A$ may also be understood as follows. Let
$V^-$ denote $V$ with the opposite bilinear form $-B$. Then $V\oplus
V^-$ with split bilinear form $B\oplus (-B)$ is
isometric to $\VV=V\oplus V^*$ by the map $(a,b)\mapsto (a-b,\
(a+b)/2)$. This defines an inclusion $\kappa\colon \on{O}(V)\hra
\on{O}(V\oplus {V}^-)\cong \on{O}(\VV)$. The group $\on{O}(\VV)$ acts
on Lagrangian subspaces, and one has $E_A=\kappa(A)\cdot V^*$.  
\end{remark}

\subsection{Dirac structures on vector bundles}
The theory developed above extends to (continuous) vector bundles
$V\to M$ in a straightforward way. Thus, Dirac structures
$(\VV,E)$ are now given in terms of Lagrangian sub-bundles $E\subset
\VV=V\oplus V^*$. Given a Euclidean metric on $V$, the Lagrangian
sub-bundles are identified with sections $A\in\Gamma(\on{O}(V))$.  A Dirac
morphism $(\Theta,\om)\colon (\VV,E)\da (\VV',E')$ is a vector bundle
map $\Theta\colon V\to V'$ together with a 2-form $\om\in
\Gamma(\wedge^2 V^*)$, such that the fiberwise maps and 2-forms define
Dirac morphisms $(\Theta_m,\om_m)\colon (\VV_m,E_m)\da
(\VV'_{\Phi(m)},E'_{\Phi(m)})$.  Here $\Phi$ is the map on the base
underlying the bundle map $\Theta$.

\begin{example}
  For any Dirac structure $(\VV,E)$, let $U:=\on{ran}(E)\subset V$ be
  the projection of $E$ along $V^*$. If $U$ is a sub-bundle of $V$,
  then the inclusion $U\hra V$ defines a strong Dirac morphism,
  $(\UU,U)\da (\VV,E)$. More generally, if $\Phi\colon N\to M$ is such
  that $U:=\Phi^*\on{ran}(E)\subset \Phi^*V$ is a sub-bundle, then
  $\Phi$ together with fiberwise inclusion defines a strong Dirac morphism
  $(\UU,U)\da (\VV,E)$. For instance, if $(\VV,E)$ is invariant under
  the action of a Lie group, one may take $\Phi$ to be the inclusion
  of an orbit.
\end{example}

\subsection{The Dirac structure over the orthogonal group}
Let $X$ be a vector space, and put $\mathbb{X}=X\oplus X^*$. The
trivial bundle $V_{\Lag(\XX)}=\Lag(\XX)\times X$ carries a
\emph{tautological Dirac structure} $(\VV_{\Lag(\XX)},E_{\Lag(\XX)})$,
with fiber $(E_{\Lag(\XX)})_m$ at $m\in \on{Lag}(\mathbb{X})$ the
Lagrangian subspace labeled by $m$.  Given a Euclidean metric $B$ on $X$,
we may identify $\on{Lag}(\mathbb{X})=\on{O}(X)$; the tautological
Dirac structure will be denoted by
$(\VV_{\on{O}(X)},\,E_{\on{O}(X)})$.  It is equivariant for the
conjugation action on $\on{O}(X)$.  We will now show that the
tautological Dirac structure over $\on{O}(X)$ is multiplicative, in
the sense that group multiplication lifts to a strong Dirac morphism.
Let $\Sigma\colon V_{\on{O}(X)}\times V_{\on{O}(X)}\to V_{\on{O}(X)}$ 
be the bundle map, given by the group multiplication on $
V_{\on{O}(X)}$ viewed as a semi-direct product $\on{O}(X)\ltimes
X$. That is, 
\begin{equation}\label{eq:semidirect}
\Sigma((A_1,\xi_1),(A_2,\xi_2))=(A_1A_2,\ A_2^{-1}\xi_1+\xi_2).
\end{equation}
Let $\sig$ be the 2-form on $V_{\on{O}(X)}\times V_{\on{O}(X)}$, 
given at $(A_1,A_2)\in \on{O}(X)\times\on{O}(X)$ as follows, 
\begin{equation}\label{eq:sigmaform}
 \sig_{(A_1,A_2)}((\xi_1,\xi_2),(\zeta_1,\zeta_2))
=\ts{\hh} ( B(\xi_1,\,A_2 \zeta_2)-B(A_2 \xi_2,\zeta_1)).
\end{equation}
Similar to \cite[Section 3.4]{al:pur} we have:
\begin{proposition}\label{prop:mult}
The map $\Sigma$ and 2-form $\sigma$ define a strong Dirac morphism 
\[ (\Sigma,\sig)\colon (\VV_{\on{O}(X)},E_{\on{O}(X)})\times (\VV_{\on{O}(X)},E_{\on{O}(X)})\da (\VV_{\on{O}(X)},E_{\on{O}(X)})\]
This morphism is associative in the sense that 
\[ (\Sig,\sig)\circ (\Sig\times\on{id},\sig\times 0)
=(\Sig,\sig)\circ (\on{id}\times \Sig,0\times \sig)\] as morphisms
$(\VV,E)\times (\VV,E)\times (\VV,E)\da (\VV,E)$.
\end{proposition}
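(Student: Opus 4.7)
The plan is to verify three claims in sequence: (a) that the forward image of the source Lagrangian under $(\Sigma, \sigma)$ equals $E_{\on{O}(X)}$, so $(\Sigma, \sigma)$ defines a Dirac morphism; (b) that this morphism is strong, i.e.\ the source Lagrangian meets $\ker(\Sigma, \sigma)$ trivially; and (c) that the two triple compositions agree as morphisms.

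For (a), I will fix $(A_1, A_2) \in \on{O}(X) \times \on{O}(X)$ and parametrize elements of $E_{A_i}$ by vectors $w_i \in X$ via the defining formula for $E_A$, using $B$ to identify $X^* \cong X$. Writing out the relation $x \sim_{(\Sigma, \sigma)} (v', \alpha')$ produces $v' = A_2^{-1}v_1 + v_2$ together with two linear equations that determine $\alpha'$ from $(v_i, \alpha_i)$. Eliminating $\alpha'$ yields a compatibility condition between $(v_1, \alpha_1)$ and $(v_2, \alpha_2)$; using the observations $\alpha_1 + \tfrac{1}{2}v_1 = w_1$ and $A_2(\alpha_2 - \tfrac{1}{2}v_2) = w_2$, both immediate from the parametrization, this condition reduces to $w_1 = w_2$. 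Whenever it holds, a short computation simplifies the resulting $(v', \alpha')$ to $\bigl((I - (A_1A_2)^{-1})w_1,\ \tfrac{1}{2}(I + (A_1A_2)^{-1})w_1\bigr) \in E_{A_1A_2}$, and conversely every element of $E_{A_1A_2}$ arises this way.

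For (b), I compute $\ker(\Sigma, \sigma)$ directly from the definition: its elements satisfy $A_2^{-1}v_1 + v_2 = 0$ together with $(\alpha_1, \alpha_2) = \iota_{(v_1, v_2)}\sigma_{(A_1, A_2)}$, which in particular forces $\alpha_1 = \tfrac{1}{2}v_1$. Imposing the $E_{A_1}$ parametrization $\alpha_1 = \tfrac{1}{2}(I + A_1^{-1})w_1$ and $v_1 = (I - A_1^{-1})w_1$, this reduces to $A_1^{-1}w_1 = 0$, so $w_1 = 0$; the remaining coordinates then vanish.

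For (c), both triple-composition underlying maps send $((A_i, \xi_i))_{i=1}^3$ to $(A_1A_2A_3,\ (A_2A_3)^{-1}\xi_1 + A_3^{-1}\xi_2 + \xi_3)$, so they agree by associativity of the semidirect product $\on{O}(X) \ltimes X$. It remains to verify the 2-form identity $(\sigma \times 0) + (\Sigma \times \on{id})^*\sigma = (0 \times \sigma) + (\on{id} \times \Sigma)^*\sigma$, which follows by expanding both sides through \eqref{eq:sigmaform} and repeatedly applying $B(A^{-1}\xi, \eta) = B(\xi, A\eta)$ to match corresponding terms. The main obstacle throughout is bookkeeping---tracking the factors of $\tfrac{1}{2}$, the $B$-identification $X^* \cong X$, and the pullbacks through the two distinct factorizations of $\Sigma$---rather than any conceptual difficulty; the one recurring simplification is orthogonality of each $A_i$.
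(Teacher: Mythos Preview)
Your proof is correct and follows essentially the same approach as the paper: both use the parametrization $e_i(w_i)=\bigl((I-A_i^{-1})w_i,\ \tfrac{1}{2}(I+A_i^{-1})w_i\bigr)$ of $E_{A_i}$ and reduce the relation $e_1(w_1)\times e_2(w_2)\sim_{(\Sigma,\sigma)} e(w)$ to the single condition $w_1=w_2=w$. The only differences are organizational---the paper establishes the forward image and strongness simultaneously by noting this gives a bijection $E_{A_1}\times E_{A_2}\to E_{A_1A_2}$, whereas you separate them into parts (a) and (b); and you spell out the associativity check (c), which the paper leaves entirely to the reader.
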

\begin{proof}[Outline of Proof]
Given $A_1,A_2\in \on{O}(X)$ let $A=A_1A_2$, and put 
\begin{equation}\label{eq:basis}
e(\xi)=((I-A^{-1})\xi,\ (I+A^{-1})\ts{\f{\xi}{2}}),\ \ \xi\in X.
\end{equation}
Define $e_i(\xi_i)$ similarly for $A_1,A_2$. One checks that 
\[ e_1(\xi_1)\times e_2(\xi_2)\sim_{(\Sigma,\sig)} e(\xi)\] if and only
if $\xi_1=\xi_2=\xi$.  The straightforward calculation  
is left to the reader. It follows that every element in
$E_{\on{O}(X)}|_A$ is related to a unique element in $
E_{\on{O}(X)}|_{A_1}\times E_{\on{O}(X)}|_{A_2}$. 
\end{proof}

\subsection{Cayley transform and exponential map}\label{subsec:exp}
The trivial bundle $V_{\wedge^2 X}=\wedge^2 X\times X$ carries a Dirac
structure $(\VV_{\wedge^2 X},\ E_{\wedge^2 X})$, with fiber at $a\in
\wedge^2 X$ the graph $\on{Gr}_a=\{(\iota_\mu a,\mu)|\,\mu\in X^*\}$.
It may be viewed as the restriction of the tautological Dirac
structure under the inclusion $\wedge^2 X\hra \Lag(\XX),\ a\mapsto
\Gr_a$. Use a Euclidean metric $B$ on $X$ to identify $\wedge^2
X=\mf{o}(X)$, and write $(\VV_{\mf{o}(X)},\,E_{\mf{o}(X)})$.  The
orthogonal transformation corresponding to the Lagrangian subspace
$\on{Gr}_a$ is given by the Cayley transform $\ts{\f{I+a/2}{I-a/2}}$.
Hence, the bundle map
\[ \Theta\colon 
V_{\mf{o}(X)}\to V_{\on{O}(X)},\ 
(a,\xi)\mapsto (\ts{\f{I+a/2}{I-a/2}},\,\xi)
\] 
together with the zero 2-form define a strong Dirac morphism
\[(\Theta,0)\colon (\VV_{\mf{o}(X)} ,E_{\mf{o}(X)})\da
(\VV_{\on{O}(X)},\ E_{\on{O}(X)}),\] 
with underlying map the Cayley transform. On the other hand, we may also try to lift the exponential
map $\exp\colon \mf{o}(X)\to \on{O}(X)$. Let 
\begin{equation}\label{eq:semiexp}
 \Pi\colon V_{\mf{o}(X)}\to V_{\on{O}(X)},\ 
(a,\xi)\mapsto (\exp(a),\ \ts{\f{I-e^{-a}}{a}}\xi), 
\end{equation}
the exponential map for the semi-direct product $\mf{o}(X)\ltimes X\to
\on{O}(X)\ltimes X$.  Define a 2-form
$\varpi$ on $V_{\mf{o}(X)}$ by 
\begin{equation}
\varpi_a(\xi_1,\xi_2)=-B(\ts{\f{a-\sinh(a)}{a^2}}\xi_1,\, \xi_2).
\end{equation} 
The following is parallel to 
\cite[Section 3.5]{al:pur}.
\begin{proposition}\label{prop:exp}
  The map $\Pi$ and the 2-form $\varpi$ define a Dirac morphism
  \[ (\Pi,-\varpi)\colon (\VV_{\mf{o}(X)},E_{\mf{o}(X)})\da
  (\VV_{\on{O}(X)},E_{\on{O}(X)}).\]  
It is a strong Dirac morphism
  over the open subset $\mf{o}(V)_\natural$ where the exponential map
  has maximal rank.
\end{proposition}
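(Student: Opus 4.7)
The plan is to verify this proposition by an explicit fiberwise computation, mirroring the argument suggested for Proposition~\ref{prop:mult}.

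Fix $a \in \mf{o}(X)$ and set $A = \exp(a)$. Using $B$ to identify $X^* \cong X$, every element of $E_{\mf{o}(X)}|_a$ takes the form $(a\zeta, \zeta)$ for some $\zeta \in X$, and every element of $E_{\on{O}(X)}|_A$ takes the form $e(v) = ((I-A^{-1})v,\,(I+A^{-1})v/2)$ as in \eqref{eq:basis}. I would guess the candidate relation $(a\zeta,\zeta) \sim_{(\Pi_a,-\varpi_a)} e(\zeta)$ and verify it directly: the vector part reduces to the tautology $\Pi_a(a\zeta) = \tfrac{I-e^{-a}}{a}(a\zeta)=(I - A^{-1})\zeta$, while the covector part requires
\[ -\iota_{a\zeta}\varpi_a + \Pi_a^*\bigl((I+A^{-1})\zeta/2\bigr) = \zeta. \]
Using $a^T = -a$ one finds $\Pi_a^* = \frac{e^a-I}{a}$, so the second term simplifies through the elementary identity $(e^a-I)(I+e^{-a}) = 2\sinh a$ to $\frac{\sinh a}{a}\zeta$, while the first term works out to $\frac{a-\sinh a}{a}\zeta$. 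The two sum telescopically to $\zeta$. Since the forward image of a Lagrangian subspace under a linear Dirac morphism is Lagrangian and has the same dimension $\dim X$ as $E_{\on{O}(X)}|_A$, the two coincide, proving the Dirac morphism assertion.

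For the strong Dirac claim, I would unwind
\[ \ker(\Pi_a,-\varpi_a) = \{(v,-\iota_v\varpi_a)\mid v\in \ker\Pi_a\}, \]
intersect it with $E_{\mf{o}(X)}|_a$, and substitute $v=a\zeta$. This yields the simultaneous conditions $(I-e^{-a})\zeta=0$ and $\tfrac{\sinh a}{a}\zeta=0$. Passing to the spectral decomposition of $a$ on $X_\C$, both can hold only on eigen-components at eigenvalues in $2\pi i\Z\setminus\{0\}$; these are precisely the eigenvalues at which the semi-direct-product exponential $\Pi$ fails to be a local diffeomorphism near $(a,\xi)$. Hence $E_{\mf{o}(X)}\cap \ker(\Pi,-\varpi)=0$ on $\mf{o}(V)_\natural$.

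The substantive piece of the argument is the telescoping identity $\frac{a-\sinh a}{a}+\frac{\sinh a}{a}=1$; everything else is bookkeeping. I expect the main obstacle to be the careful tracking of the $B$-identification $X^*\cong X$, both when computing $\Pi_a^*$ and when converting $\iota_v\varpi_a$ back into a vector in $X$. The form $\varpi$ is essentially forced by the requirement that this telescoping identity close up.
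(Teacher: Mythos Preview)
Your argument is correct and follows the same strategy as the paper: verify $e_0(\zeta)\sim_{(\Pi,-\varpi)}e(\zeta)$ by direct computation (the paper leaves this as ``straightforward calculation'', which is exactly your telescoping identity), then analyze $E_{\mf{o}(X)}\cap\ker(\Pi,-\varpi)$. The only difference is in the strong-morphism step: the paper argues more directly that on $\mf{o}(X)_\natural$ the fiber map $\Pi_a$ is invertible, so $\Pi_a(a\zeta)=0$ forces $a\zeta=0$, whence $\zeta=-\iota_{a\zeta}\varpi_a=0$; your spectral decomposition reaches the same conclusion but with slightly more bookkeeping.
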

\begin{proof}[Outline of Proof]
Let $a\in\mf{o}(X)$ and $A=\exp(a)$ be given. Let $e(\xi)$ be as in
\ref{eq:basis}, and define $e_0(\xi)=(a\xi,\xi)$. One checks by
straightforward calculation that 
\[ e_0(\xi)\sim_{(\Pi,-\varpi)} e(\xi)\]
proving that $(\Pi,-\varpi)\colon (\VV_{\mf{o}(X)},E_{\mf{o}(X)})\da
(\VV_{\on{O}(X)},E_{\on{O}(X)})$ is a Dirac morphism. Suppose now that
the exponential map is regular at $a$. By the well-known formula for
the differential of the exponential map, this is equivalent to
invertibility of $\Pi_a$. An element of the form $(a\xi,\xi)$ lies in
$\ker(\Theta,\om)$ if and only if $\Pi_a(a\xi)=0$ and
$\xi=\iota_{a\xi}\varpi_a$. The first condition shows $a\xi=0$, and
then the second condition gives $\xi=0$. Hence
$e_0(\xi)\sim_{(\Pi,-\varpi)}0 \Rightarrow \xi=0$. Conversely, if
$\Pi_a$ is not invertible, and $\xi\not=0$ is an element in the
kernel, then $(a\xi,\xi)\sim_{(\Pi,-\varpi)} 0$.
\end{proof}

%\begin{example} 
%  Suppose $C\subset \on{O}(X)$ a conjugacy class, and let $V\to C$ be
%  the vector bundle with fibers
%\[ V_A=\{(I-A^{-1})x)|\ x\in X\}\subset X.\]
%
%Let $\Theta\colon V\to \on{O}(X)\times X$ be the inclusion, and $\om
%\in \Gamma(\wedge^2 V^*)$ the 2-form 
%
%\[ \om((I-A^{-1})x,\,(I-A^{-1})y)=\hh (x,(A-A^{-1})y).\]
%
%Then $(\Theta,\om)$ is a strong Dirac morphism $(\VV,V)\da
%(\VV_{\on{O}(X)},E_{\on{O}(X)})$. 
%\end{example}

\section{Dixmier-Douady bundles and Morita morphisms}\label{sec:dix}
We give a quick review of Dixmier-Douady bundles, geared towards
applications in twisted $K$-theory. For more information we refer to
the articles \cite{di:ch,at:twi,ros:co,mat:fra,mat:ind,mat:inddec} and
the monograph \cite{rae:mo}. Dixmier-Douady bundles are also known as
\emph{Azumaya bundles}.

\subsection{Dixmier-Douady bundles}
A \emph{Dixmier-Douady bundle} is a locally trivial bundle $\A\to M$
of $\Z_2$-graded $C^*$-algebras, with typical fiber $\K(\H)$ the
compact operators on a $\Z_2$-graded (separable) complex Hilbert space, and with
structure group $\on{Aut}(\K(\H))=\on{PU}(\H)$, using the strong
operator topology. The tensor product of two such bundles
$\A_1,\A_2\to M$ modeled on $\K(\H_1),\K(\H_2)$ is a Dixmier-Douady
bundle $\A_1\otimes\A_2$ modeled on $\K(\H_1\otimes\H_2)$. For any
Dixmier-Douady bundle $\A\to M$ modeled on $\K(\H)$, the bundle of
opposite $C^*$-algebras $\A^{\on{op}}\to M$ is a Dixmier-Douady bundle
modeled on $\K(\H^{\on{op}})$, where $\H^{\on{op}}$ denotes the
opposite (or conjugate) Hilbert space. 

\subsection{Morita isomorphisms}
A \emph{Morita isomorphism} $\E\colon \A_1\da \A_2$
between two Dixmier-Douady bundles over $M$ is a $\Z_2$-graded bundle
$\E\to M$ of Banach spaces, with a fiberwise $\A_2-\A_1$ bimodule
structure
\[ \A_2\circlearrowright \E \circlearrowleft \A_1\]
that is locally modeled on $\K(\H_2)\circlearrowright \K(\H_1,\H_2)
\circlearrowleft \K(\H_1)$. Here $\K(\H_1,\H_2)$ denotes the
$\Z_2$-graded Banach space of compact operators from $\H_1$ to $\H_2$.
In terms of the associated principal bundles, a Morita isomorphism is
given by a lift of the structure group $\on{PU}(\H_2)\times
\on{PU}(\H_1^{\on{op}})$ of $\A_2\otimes\A_1^{\on{op}}$ to
$\on{PU}(\H_2\otimes\H_1^{\on{op}})$.  The composition of two Morita
isomorphisms $\E\colon \A_1\da \A_2$ and $\E'\colon \A_2\da \A_3$ is
given by $\E'\circ \E=\E'\otimes_{\A_2}\E$, the fiberwise completion
of the algebraic tensor product over $\A_2$. In local trivializations,
it is given by the composition $\K(\H_2,\H_3)\times \K(\H_1,\H_2)\to
\K(\H_1,\H_3)$.

\begin{examples}
\begin{enumerate}
\item 
A Morita isomorphism $\E\colon \C\da \A$ is called a
      \emph{Morita trivialization} of $\A$, and amounts to a Hilbert
      space bundle $\E$ with an isomorphism $\A=\K(\E)$.
\item Any $*$-bundle isomorphism $\phi\colon \A_1\to \A_2$ may be
      viewed as a Morita isomorphism $\A_1\da \A_2$, by taking $\E=\A_2$
      with the $\A_2-\A_1$-bimodule action $x_2\cdot y\cdot x_1=
      x_2\,y\,\phi(x_1)$.
%of $x_1\in \Gamma(\A_1),\ x_2\in\Gamma(\A_2)$ given by 
%      $y\mapsto x_2\,y\,\phi(x_1)$. 
\item For any Morita isomorphism $\E\colon \A_1\da \A_2$ there is an
      \emph{opposite} Morita isomorphism $\E^{\on{op}}\colon
      \A_2\da\A_1$, where $\E^{\on{op}}$ is equal to $\E$ as a real
      vector bundle, but with the opposite scalar multiplication.
      Denoting by $\chi\colon \E\to \E^{\on{op}}$ the anti-linear map
      given by the identity map of the underlying real bundle, the
      $\A_1-\A_2$-bimodule action reads $x_1\cdot\chi(e)\cdot
      x_2=\chi(x_2^*\cdot e\cdot x_1^*)$.
%      $x_1\,\chi(e)\,x_2=\chi(x_2^*\,e\,x_1^*)$, for $e\in\Gamma(\E),\ 
%      x_i\in\Gamma(\A_i)$.  
The Morita isomorphism $\E^{\on{op}}$ is
      `inverse' to $\E$, in the sense that there are canonical
      bimodule isomorphisms
\[ \E^{\on{op}}\circ
      \E\cong \A_1,\ \ \ \ \E\circ \E^{\on{op}}\cong \A_2.\] 
\end{enumerate}
\end{examples}

\subsection{Dixmier-Douady theorem}\label{subsec:dd}
The Dixmier-Douady theorem (in its $\Z_2$-graded version)
states that the Morita isomorphism classes of Dixmier-Douady bundles 
$\A\to M$ are classified by elements
\[ \on{DD}(\A)\in H^3(M,\Z)\times H^1(M,\Z_2),\]
called the \emph{Dixmier-Douady class} of $\A$. Write $\on{DD}(\A)=(x,y)$.
Letting $\hat{\A}$ be the Dixmier-Douady-bundle obtained from $\A$ by
forgetting the $\Z_2$-grading, the element $x$ is the obstruction to
the existence of an (ungraded) Morita trivialization $\hat{\E}\colon
\C\da \hat{\A}$.  The class $y$ corresponds to the obstruction of
introducing a compatible $\Z_2$-grading on $\hat{\E}$. In more detail,
given a loop $\gamma\colon S^1\to M$ representing a homology class
$[\gamma]\in H_1(M,\Z)$, choose a Morita trivialization
$(\gamma,\hat{\F})\colon \C\da \wh{\A}$. Then $y([\gamma])=\pm 1$,
depending on whether or not $\hat{\F}$ admits a compatible
$\Z_2$-grading. 
\begin{enumerate}
\item
The opposite Dixmier-Douady
bundle $\A^{\on{op}}$ has class $\on{DD}(\A^{\on{op}})=-\on{DD}(\A)$. 
\item If $\on{DD}(\A_i)=(x_i,y_i),\ i=1,2$, are the
classes corresponding to two Dixmier-Douady bundles $\A_1,\A_2$ over
$M$, then \cite[Proposition 2.3]{at:twi}
\[ \on{DD}(\A_1\otimes\A_2)=(x_1+x_2+\ti{\beta}(y_1\cup y_2),\
y_1+y_2)\]
where $y_1\cup y_2\in H^2(M,\Z_2)$ is the cup product, and
$\ti{\beta}\colon H^2(M,\Z_2)\to H^3(M,\Z)$ is the Bockstein homomorphism.
\end{enumerate}

\subsection{2-isomorphisms}
Let $\A_1,\A_2$ be given Dixmier-Douady bundles over $M$.
\begin{definition}
  A \emph{2-isomorphism} between two Morita isomorphisms
  \[\E,\E'\colon \A_1\da\A_2\] is a continuous bundle isomorphism
  $\E\to \E'$, intertwining the norms, the $\Z_2$-gradings and the
  $\A_2-\A_1$-bimodule structures.
\end{definition}
Equivalently, a 2-isomorphism may be viewed as a trivialization of the
$\Z_2$-graded Hermitian line bundle
  \begin{equation}\label{eq:linebundle}
 L=\on{Hom}_{\A_2-\A_1}(\E,\E')
\end{equation}
given by the fiberwise bimodule homomorphisms. Any two Morita
bimodules are related by \eqref{eq:linebundle} as $\E'=\E\otimes L$.
It follows that the set of 2-isomorphism classes of Morita isomorphisms
$\A_1\da \A_2$ is either empty, or is a a principal
homogeneous space (torsor) for the group $H^2(M,\Z)\times H^0(M,\Z_2)$
of $\Z_2$-graded line bundles.

\begin{example}\label{eq:hom2}
  Suppose the Morita isomorphisms $\E,\E'$are connected by a continuous
  path $\E_s$ of Morita isomorphisms, with $\E_0=\E,\ \E_1=\E'$.  Then
  they are 2-isomorphic, in fact $L_s=\on{Hom}_{\A_2-\A_1}(\E,\E_s)$
  is a path connecting \eqref{eq:linebundle} to the trivial line
  bundle.
\end{example}

\begin{example}\label{ex:hom3}
  Suppose $\A_s,\ s\in [0,1]$ is a continuous family of
  Dixmier-Douady-bundles over $M$, i.e. their union defines a
  Dixmier-Douady bundle $\A\to [0,1]\times M$. Then there exists a
  continuous family of isomorphisms $\phi_s\colon \A_0\to \A_s$, i.e. 
  an isomorphism $\pr_2^*\A_0\cong \A$ of bundles over $[0,1]\times M$. (The
  existence of such an isomorphism is clear in terms of the associated
  principal $\on{PU}(\H)$-bundles.) By composing with $\phi_0^{-1}$ 
if necessary, we may assume $\phi_0=\on{id}$. Any other such family of isomorphisms $\phi_s'\colon
  A_0\to \A_s,\ \phi_0'=\on{id}$ is related to $\phi_s$ by a family $L_s$ of line
  bundles, with $L_0$ the trivial line bundle. We conclude that the
  homotopy of Dixmier-Douady bundles $\A_s$ gives a distinguished
  2-isomorphism class of isomorphisms $\A_0\to \A_1$.
\end{example}

\subsection{Clifford algebra bundles}\label{subsec:clifford}
%Bundles of complex matrix algebras (also known as Azumaya bundles
%\cite{mat:fra}) are special cases of Dixmier-Douady bundles, where the
%Hilbert space $\H$ is taken to be finite-dimensional. The
%Dixmier-Douady classes corresponding to finite rank bundles are always
%torsion classes, and every torsion class is realized in this way.
%
Suppose that $V\to M$ is a Euclidean vector bundle of
rank $n$. A \emph{$\Spin_c$-structure on $V$} is given by an orientation on $V$
together with a lift of the structure group of $V$ from $\SO(n)$ to
$\on{Spin}_c(n)$, where $n=\on{rk}(V)$. According to Connes \cite{con:lon} and Plymen
\cite{ply:st}, this is equivalent to Definition \ref{def:spinc} below in
terms of Dixmier-Douady bundles. 

Recall that if $n$ is even, then the associated bundle of complex
Clifford algebras $\Cl(V)$ is a Dixmier-Douady bundle, modeled on
$\Cl(\R^n)=\on{End}(\wedge \C^{n/2})$. In this case, a
$\Spin_c$-structure may be defined to be  a Morita trivialization $\S\colon
\C\da \Cl(V)$, with $\S$ is the associated
\emph{spinor bundle}. To include the case of odd rank, it is
convenient to introduce 
\[ \wt{V}=V\oplus \R^n,\ \ \ \wt{\Cl}(V):=\Cl(\wt{V}). \]
\begin{definition} \label{def:spinc}
  A \emph{$\Spin_c$-structure} on a Euclidean vector bundle $V$ is a
  Morita trivialization
\[ \wt{\ca{S}}\colon \C\da \wt{\Cl}(V)\] 
The bundle $\wt{\S}$ is called the corresponding \emph{spinor
  bundle}. An isomorphism of two $\Spin_c$-structures is a 2-isomorphism
of the defining Morita trivializations. 
\end{definition}
If $n$ is even, one recovers $\S$ by composing with the Morita
isomorphism $\wt{\Cl}(V)\da \Cl(V)$. The Dixmier-Douady class $(x,y)$
of $\wt{\Cl}(V)$ is the obstruction to the existence of a
$\Spin_c$-structure: In fact $x$ is the third integral Stiefel-Whitney
class $\ti{\beta}(w_2(V))\in H^3(M,\Z)$, while $y$ is the first
Stiefel-Whitney class $w_1(V)\in H^1(M,\Z_2)$, i.e.  the obstruction
to orientability of $V$.

Any two $\Spin_c$-structures on $V$ differ by a $\Z_2$-graded
Hermitian line bundle, and an isomorphism of $\Spin_c$-structures
amounts to a trivialization of this line bundle. Observe that
there is a Morita trivialization
\[\wedge \wt{V}^\C\colon \C\da \wt{\Cl}(V\oplus V)=\wt{\Cl}(V)\otimes \wt{\Cl}(V)\]
defined by the complex structure on $\wt{V}\oplus \wt{V}\cong
\wt{V}\otimes\R^2$. Hence, given a $\Spin_c$-structure, 
we can define the Hermitian line bundle 
\begin{equation}\label{eq:canon}
 K_{\wt{S}}=\on{Hom}_{\wt{\Cl}(V\oplus V)}(\wt{S}\otimes\wt{S},\ \wedge \wt{V}^\C).\end{equation}
(If $n$ is even, one may omit the $\sim$'s.)  This is the
\emph{canonical line bundle} of the $\Spin_c$-structure.  If the
$\Spin_c$-structure on $V$ is defined by a complex structure $J$, then
the canonical bundle coincides with 
$\det(V_-)=\wedge^{n/2}V_-$, where $V_-\subset V^\C$ is the $-\i$ eigenspace of $J$.

%Given a
%$\Spin_c$-structure $\wt{\S}$ we have $\wt{\Cl}(\V)=\wt{\S}\otimes
%\wt{\S}^{\on{op}}$ as $\wt{\Cl}(\V)-\wt{\Cl}(\V)$ bimodules.  The
%standard an\-ti-\-au\-to\-mor\-phism $x\mapsto x^\top$ of the Clifford
%algebra turns $\ti{S}$ into a right $\wt\Cl(V)$-module $\ti{S}^\top$;
%the line bundle $K=\wt{S}^\top\circ
%\wt{S}=\wt{S}^\top\otimes_{\wt{\Cl}(V)}\wt{\S}$
%$\wt{\S}^{\on{op}}$ into a left $\wt\Cl(V)$-module; the line
%bundle $K_S=\on{Hom}_{\Cl(\V)}(\wt{S}^\on{op}},\wt\S)$ 
%is called the \emph{canonical line bundle} of the $\Spin_c$-structure.

\subsection{Morita morphisms}
It is convenient to extend the notion of Morita isomorphisms of
Dixmier-Douady bundles, allowing non-trivial maps on the base.  A
\emph{Morita morphism}
\begin{equation}\label{eq:morphism}
(\Phi,\E)\colon \A_1\da \A_2
\end{equation} 
of bundles $\A_i\to M_i,\ i=1,2$ is a continuous map $\Phi\colon
M_1\to M_2$ together with a Morita isomorphism $\E\colon \A_1\da
\Phi^*\A_2$.  A given map $\Phi$ lifts to such a Morita morphism if
and only if $\on{DD}(\A_1)=\Phi^*\on{DD}(\A_2)$.  Composition of
Morita morphisms is defined as $(\Phi',\E')\circ (\Phi,\E)=(\Phi'\circ
\Phi,\ \ \Phi^*\E'\circ \E)$. If $\E\colon
\C\da \A$ is a Morita trivialization, we can think of
$\E^{\on{op}}\colon \A\da \C$ as a Morita morphism covering the map
$M\to \pt$. As mentioned in the introduction, a Morita morphism
\eqref{eq:morphism} such that $\Phi$ is \emph{proper} induces a
push-forward map in twisted K-homology.

\subsection{Equivariance}
The Dixmier-Douady theory generalizes to the $G$-equi\-va\-ri\-ant
setting, where $G$ is a compact Lie group. $G$-equivariant
Dixmier-Douady bundles over a $G$-space $M$ are classified by
$H^3_G(M,\Z)\times H^1_G(M,\Z_2)$. If $M$ is a point, a
$G$-equivariant Dixmier-Douady bundle $\A\to \pt$ is of the form
$\A=\K(\H)$ where $\H$ is a $\Z_2$-graded Hilbert space with an action
of a central extension $\wh{G}$ of $G$ by $\U(1)$. (It is a well-known
fact that $H^3_G(\pt,\Z)=H^3(BG,\Z)$ classifies such central
extensions.) The definition of $\Spin_c$-structures in terms of Morita
morphisms extends to the
$G$-equivariant in the obvious way.

\section{Families of skew-adjoint real Fredholm operators}\label{sec:fam}
In this Section, we will explain how a continuous family of
skew-adjoint Fredholm operators on a bundle of real Hilbert spaces
defines a Dixmier-Douady bundle. The construction is inspired by ideas
in Atiyah-Segal \cite{at:twi}, Carey-Mickelsson-Murray\cite{car:ind,mic:ger},
and Freed-Hopkins-Teleman \cite[Section 3]{fr:lo2}.

\subsection{Infinite dimensional Clifford algebras}
We briefly review the spin representation for infinite dimensional
Clifford algebras. Excellent sources for this material are the book
\cite{ply:sp} by Plymen and Robinson and the article \cite{ara:bo} by
Araki.

Let $\V$ be an infinite dimensional real Hilbert space, and
$\V^\C$ its complexification. The Hermitian inner product on $\V^\C$
will be denoted $\l\cdot,\cdot\r$, and the complex conjugation map by
$v\mapsto v^*$. 
%The conjugation operation allows us to turn the
%Hermitian inner product into a symmetric $\C$-bilinear form
%$(v_1,v_2)=\l v_1^*,\ v_2\r$.
%
Just as in the finite-dimensional case, one defines the Clifford
algebra $\Cl(\V)$ as the $\Z_2$-graded unital complex algebra with odd
generators $v\in \V$ and relations, $v v=\l v,v\r$.  The Clifford
algebra carries a unique anti-linear anti-involution $x\mapsto x^*$
extending the complex conjugation on $\V^\C$, and a unique norm
$||\cdot||$ satisfying the $C^*$-condition $||x^*x||=||x||^2$. Thus
$\Cl(\V)$ is a $\Z_2$-graded pre-$C^*$-algebra.

A \emph{(unitary) module over $\Cl(\V)$} is a complex $\Z_2$-graded
Hilbert space $\E$ together with a $*$-homomorphism $\varrho\colon
\Cl(\V)\to \L(\E)$ preserving $\Z_2$-gradings.  Here $\L(\E)$ is the
$*$-algebra of bounded linear operators, and the condition on the
grading means that $\varrho(v)$ acts as an odd operator for each $v\in
\V^\C$.  

%An example is the Hilbert space completion $\E=\ol{\Cl(\V)}$
%with the action of $\Cl(\V)$ by left multiplication.

%For any Clifford module $\E$, its topological dual $\E^\vee=\L(\E,\C)$
%is again a Clifford module, with Clifford action given on generators
%$v\in \V^\C$ by
%
%\[ (\varrho^\vee(v)\phi)(s)=\phi(\varrho(v)s),\ \
%\phi\in\E^\vee,\ s\in \E.\]
%
We will view $\L(\V)$ (the bounded $\R$-linear operators on $\V$) as
an $\R$-linear subspace of $\L(\V^\C)$. Operators in $\L(\V)$ will be
called \emph{real}. A real skew-adjoint operator $J\in \L(\V)$ is
called an \emph{orthogonal complex structure} on $\V$ if it satisfies
$J^2=-I$. Note $J^*=-J=J^{-1}$, so that $J\in \on{O}(\V)$.

The orthogonal complex structure defines a decomposition $\V^\C=\V_+\oplus \V_-$
into maximal isotropic subspaces $\V_\pm=\on{ker}(J\mp\i)\subset
\V^\C$. Note
$v\in\V_+\Leftrightarrow v^*\in\V_-$.  Define a Clifford action
of $\Cl(\V)$ on $\wedge\V_+$ by the formula
\[ \rho(v)=\sqrt{2}(\eps(v_+)+\iota(v_-)),\]
writing $v=v_++v_-$ with $v_\pm \in \V_\pm$. Here $\eps(v_+)$ denotes
exterior multiplication by $v_+$, while the contraction $\iota(v_-)$
is defined as the unique derivation such that $\iota(v_-)w=\l
v_-^*,w\r$ for $w\in \V^\C\subset\wedge\V^\C$. Passing to the Hilbert
space completion one obtains 
%Then
%$\rho(v)^*=\rho(v^*)$ relative to the Hermitian inner product on
%$\wedge\V_+$. Hence $\rho$ extends to define 
a unitary $\Z_2$-graded Clifford module
\[  \ca{S}_J=\ol{\wedge \V_+},\]
called the \emph{spinor module} or \emph{Fock representation} defined
by $J$.
%Equivalently, we may think of $\ca{S}_J$ as a completion
%of $\Cl(\V)/\Cl(\V)\V_-$, or as the induced module
%$\on{ind}_{\wedge\V_-}^{\Cl(\V)}\C$.

%The bilinear pairing $\V_+\otimes\V_-\to \C,\ v_+\otimes v_-\mapsto \l
%v_+^*,v_-\r$ is non-degenerate, and extends to a non-degenerate
%pairing between the exterior algebras. The resulting pairing of $\S_J$
%with the Hilbert space completion $\ca{S}_{-J}=\ol{\wedge\V_-}$
%identifies $\ca{S}_{-J}\cong \ca{S}_J^\vee$ as unitary Clifford
%modules. 
The equivalence problem for Fock representations was solved by Shale
and Stinespring \cite{sha:spi}. See also \cite[Theorem 3.5.2]{ply:sp}.
\begin{theorem}[Shale-Stinespring]\label{th:equivalence}
  The $\Cl(\V)$-modules $\ca{S}_1,\ca{S}_2$ defined by orthogonal
  complex structures $J_1,J_2$ are unitarily isomorphic (up to
  possible reversal of the $\Z_2$-grading) if and only if
  $J_1-J_2\in\L_{\on{HS}}(\V)$. In this case, the unitary operator
  implementing the isomorphism is unique up to a scalar $z\in\U(1)$.
  The implementer has even or odd parity, according to the parity of
  $\hh\dim\ker(J_1+J_2)\in \Z$.
\end{theorem}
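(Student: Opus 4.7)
The plan is to reduce the question to the ``relative position'' of the two polarizations. Fix $J_1$ and decompose $\V^\C=\V_+\oplus \V_-$ with $\V_\pm=\ker(J_1\mp\i)$. Any other orthogonal complex structure $J_2$ has a block form with respect to this decomposition; the reality condition $J_2=\ol{J_2}$ (i.e.\ commutation with conjugation), together with $J_2^*=-J_2$ and $J_2^2=-I$, implies that the new positive subspace $\V_+^{(2)}:=\ker(J_2-\i)$ is the graph, over a finite-corank subspace of $\V_+$, of a bounded complex-linear map $Z\colon \V_+\to\V_-$ satisfying $Z^*=-Z^{\rm t}$ (i.e.\ $Z$ is antisymmetric when $\V_-$ is identified with $\V_+^*$ via the pairing). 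A short algebraic computation shows that $J_1-J_2\in\L_{\on{HS}}(\V)$ iff the off-diagonal blocks of $J_2$ are Hilbert--Schmidt iff $Z$ is Hilbert--Schmidt. The finite-dimensional ``missing'' part is precisely $W:=\ker(J_1+J_2)$, on which $J_2=-J_1$.

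For the sufficiency direction I would first split off the finite-dimensional factor $W$: on $W$ the Clifford algebra is finite-dimensional, the two spinor modules $\ca{S}_{J_i}|_W$ are both the unique irreducible (graded) modules up to parity, and the implementer is (up to scalar) Clifford multiplication by the product of a real orthonormal basis of $W$. This is even or odd according as $\tfrac12\dim W\in\Z$ is even or odd, which accounts for the parity statement of the theorem. After this reduction we may assume $W=0$, so $\V_+^{(2)}$ is the graph of a Hilbert--Schmidt $Z\colon\V_+\to\V_-$. The new vacuum $\Omega_{J_2}$ is then realized inside $\ca{S}_{J_1}$ as the ``squeezed state''
\[
\Omega_{J_2}=\det(I+Z^*Z)^{-1/4}\,\exp\!\bigl(\tfrac12\sum_{i,j}Z_{ij}\,e_i^*\wedge e_j^*\bigr)\,\Omega_{J_1},
\]
where $\{e_i\}$ is an orthonormal basis of $\V_+$. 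The Hilbert--Schmidt condition on $Z$ is exactly what makes the infinite product $\det(I+Z^*Z)$ (a Fredholm determinant) converge to a nonzero number, so $\Omega_{J_2}$ is a well-defined unit vector; annihilation by $\rho(v)$ for $v\in\V_-^{(2)}$ is verified by a direct calculation with $\eps,\iota$. The intertwiner $U\colon\ca{S}_{J_2}\to\ca{S}_{J_1}$ is then extended from $\Omega_{J_2}\mapsto\Omega_{J_2}$ by compatibility with the (cyclic) Clifford action, and unitarity follows from the Clifford relations.

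For the necessity direction, suppose $U\colon\ca{S}_{J_2}\to\ca{S}_{J_1}$ is a unitary intertwiner. Then $U\Omega_{J_2}\in\ca{S}_{J_1}$ is annihilated by $\rho(\V_-^{(2)})$, and expanding it in the $J_1$-Fock basis forces it to have the squeezed-state shape above; the condition that it has finite norm in $\ca{S}_{J_1}$ is exactly $Z\in\L_{\on{HS}}$, giving $J_1-J_2\in\L_{\on{HS}}(\V)$ by Step~1. Uniqueness up to $z\in\U(1)$ is Schur's lemma applied to the Fock representations, which are irreducible as $\Z_2$-graded $\Cl(\V)$-modules (standard; the vacuum is characterized up to scalar as the unique vector killed by $\rho(\V_-)$).

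I expect the main obstacle to be the sufficiency step: making rigorous sense of the formal squeezed state in the infinite-dimensional setting and verifying its vacuum property and unitarity of the resulting $U$. This is exactly where the Hilbert--Schmidt hypothesis enters sharply, via convergence of the Fredholm determinant $\det(I+Z^*Z)$, and is the technical heart of the Shale--Stinespring argument (see \cite{ply:sp}). The parity statement then falls out of the finite-dimensional $W$-factor handled separately in the reduction.
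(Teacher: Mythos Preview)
The paper does not prove this theorem; it is stated as the classical Shale--Stinespring result, with citations to \cite{sha:spi} and to \cite[Theorem~3.5.2]{ply:sp} for a textbook account. There is therefore no in-paper argument to compare your proposal against.

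Your outline is exactly the standard proof one finds in the cited reference: split off the finite-dimensional subspace $W=\ker(J_1+J_2)$ (which is indeed invariant under both $J_i$, since $v\in W$ gives $J_2J_1v=-J_2^2v=v$ and hence $(J_1+J_2)J_1v=-v+v=0$), realise the new vacuum on $W^\perp$ as a Gaussian vector $\exp(\tfrac12 Z)\,\Omega_{J_1}$ governed by an antisymmetric Hilbert--Schmidt operator $Z$, with normalisability controlled by the Fredholm determinant $\det(I+Z^*Z)$, and read off the parity of the implementer from the $\Cl(W)$-factor. The necessity direction via the shape of a vacuum vector in the $J_1$-Fock space, and uniqueness via irreducibility of Fock representations, are likewise the standard ingredients. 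So your sketch is correct and coincides with the argument in \cite{ply:sp}; there is simply nothing in the present paper to contrast it with.
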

\begin{definition}\cite[p. 193]{se:el}, \cite{fr:lo2} 
  Two orthogonal complex structures $J_1,J_2$ on a real Hilbert space
  $\V$ are called \emph{equivalent} (written $J_1\sim J_2$) if their
  difference is Hilbert-Schmidt. An equivalence class of complex
  structures on $\V$ (resp. on $\V\oplus \R$) is called an even (resp.
  odd) \emph{polarization} of $\V$.
\end{definition}
By Theorem \ref{th:equivalence} the $\Z_2$-graded $C^*$-algebra
$\K(\S_J)$ depends only on the equivalence class of $J$, in the sense
that there exists a canonical identification $\K(\S_{J_1})\equiv
\K(\S_{J_2})$ whenever $J_1\sim J_2$. That is, any polarization 
of $\V$ determines a Dixmier-Douady algebra.

\subsection{Skew-adjoint Fredholm operators}\label{subsec:skew}
Suppose $D$ is a real skew-adjoint (possibly unbounded) Fredholm
operator on $\V$, with dense domain $\on{dom}(D)\subset \V$.  In
particular $D$ has a finite-dimensional kernel, and $0$ is an isolated
point of the spectrum. Let $J_D$ denote the real skew-adjoint operator,
\[ J_D=i\on{sign}({\ts \f{1}{i}}D)\]
(using functional calculus for the self-adjoint operator ${\ts
  \f{1}{i}}D$).  Thus $J_D$ is an orthogonal complex structure on
$\ker(D)^\perp$, and vanishes on $\ker(D)$.  If $\ker(D)=0$, we may
also write $J_D=\f{D}{|D|}$. The same definition of $J_D$ also applies
to complex skew-adjoint Fredholm operators. We have:
\begin{proposition}\label{prop:graf}
  Let $D$ be a (real or complex) skew-adjoint Fredholm operator, and $Q$ a skew-adjoint
  Hilbert-Schmidt operator. Then $J_{D+Q}-J_D$ is Hilbert-Schmidt. 
\end{proposition}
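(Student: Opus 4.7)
The plan is to reduce to the invertible case, represent $\operatorname{sign}$ via an integral formula, and then exploit resolvent identities to expose the Hilbert–Schmidt factor $Q$ in each term.

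\textbf{Reduction to the invertible case.} Write $A=D/i$ and $B=(D+Q)/i$, so $A,B$ are self-adjoint Fredholm with $B-A=Q/i\in \L_{\on{HS}}(\V^\C)$, and $J_{D+Q}-J_D=i(\operatorname{sign}(B)-\operatorname{sign}(A))$. Since $A,B$ are Fredholm, $\ker A$ and $\ker B$ are finite dimensional; let $P_A,P_B$ denote the orthogonal projections onto them. Set $A'=A+P_A$ and $B'=B+P_B$. Then $A'$ and $B'$ are \emph{invertible} self-adjoint Fredholm operators, satisfying $|A'|\ge \eps$, $|B'|\ge \eps$ for some $\eps>0$ (since $0$ is an isolated point of the spectra of $A,B$). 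By construction $\operatorname{sign}(A')-\operatorname{sign}(A)=P_A$ and $\operatorname{sign}(B')-\operatorname{sign}(B)=P_B$ are finite rank, while $B'-A'=(B-A)+(P_B-P_A)$ is Hilbert–Schmidt. It therefore suffices to establish that $\operatorname{sign}(B')-\operatorname{sign}(A')$ is Hilbert–Schmidt.

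\textbf{Integral representation and resolvent expansion.} For any invertible self-adjoint operator $T$ with $|T|\ge\eps$, the elementary identity $\operatorname{sign}(x)=\tfrac{2}{\pi}\int_0^\infty \tfrac{x}{x^2+t^2}\,dt$ yields, via spectral calculus,
\[
\operatorname{sign}(T)=\tfrac{2}{\pi}\int_0^\infty T(T^2+t^2)^{-1}\,dt.
\]
Applying this to $A'$ and $B'$ and setting $\Delta=B'-A'$, the resolvent identity gives
\[
B'(B'^2+t^2)^{-1}-A'(A'^2+t^2)^{-1}
=\Delta(B'^2+t^2)^{-1} - A'(A'^2+t^2)^{-1}\bigl(B'\Delta+\Delta A'\bigr)(B'^2+t^2)^{-1}.
\]
Each of the three resulting terms is the product of bounded operators with one Hilbert–Schmidt factor $\Delta$, hence is itself Hilbert–Schmidt.

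\textbf{Hilbert–Schmidt estimate and conclusion.} The functional calculus gives the operator-norm bounds $\|(T^2+t^2)^{-1}\|\le (\eps^2+t^2)^{-1}$ and $\|T(T^2+t^2)^{-1}\|\le \min\!\bigl(\tfrac1{2t},\tfrac1\eps\bigr)$ for $T\in\{A',B'\}$. These yield, for each $t\ge 0$,
\[
\bigl\|\,B'(B'^2+t^2)^{-1}-A'(A'^2+t^2)^{-1}\bigr\|_{\on{HS}}
\le C\,\|\Delta\|_{\on{HS}}\,\Bigl(\tfrac{1}{\eps^2+t^2}+\min\!\bigl(\tfrac{1}{4t^2},\tfrac{1}{\eps^2}\bigr)\Bigr),
\]
an integrable function of $t$ on $(0,\infty)$. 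Hence the integral defining $\operatorname{sign}(B')-\operatorname{sign}(A')$ converges as a Bochner integral in the Hilbert–Schmidt class, proving that $\operatorname{sign}(B')-\operatorname{sign}(A')$, and therefore $J_{D+Q}-J_D$, is Hilbert–Schmidt.

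\textbf{Main obstacle.} The only real subtlety is the bookkeeping near $t=0$: the resolvents $(T^2+t^2)^{-1}$ are bounded but not small there, so the $\min\bigl(\tfrac{1}{4t^2},\tfrac{1}{\eps^2}\bigr)$-type estimates, which hinge on the spectral gap $\eps$ created in the reduction step, are essential for integrability and must be combined carefully to avoid losing a factor of $t^{-1}$ at the origin. Once the invertible reduction is in place, everything else is straightforward functional calculus.
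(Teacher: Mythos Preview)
Your approach is essentially the paper's, but in a disguised and more laborious form. Observe that $T(T^2+t^2)^{-1}=\tfrac12\bigl((T-it)^{-1}+(T+it)^{-1}\bigr)$, so your integral representation of $\operatorname{sign}(T)$ is a repackaging of the paper's formula $J_D=-\tfrac1\pi\int_{-\infty}^\infty (D-t)^{-1}\,dt$ for skew-adjoint invertible $D$. The paper stays with the first-order resolvent and applies the second resolvent identity $R_t(D+Q)-R_t(D)=-R_t(D+Q)\,Q\,R_t(D)$ directly; this yields a single term with the clean bound $\|R_t(D+Q)\,Q\,R_t(D)\|_{\on{HS}}\le (t^2+a^2)^{-1}\|Q\|_{\on{HS}}$ and the explicit estimate $\|J_{D+Q}-J_D\|_{\on{HS}}\le a^{-1}\|Q\|_{\on{HS}}$, which is used later (Remark~\ref{rem:continuity}, Proposition~\ref{prop:dd}). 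By squaring the resolvent you have traded one simple term for three and lost the clean constant.

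That complication also introduces a genuine (if easily repaired) gap. Your three terms are \emph{not} all products of bounded operators with one $\Delta$ factor when $D$ is unbounded: in $A'(A'^2+t^2)^{-1}\,B'\Delta\,(B'^2+t^2)^{-1}$ the unbounded factor $B'$ sits between a function of $A'$ and $\Delta$, and similarly the right-hand $A'$ in the other cross term is unbounded and not adjacent to its own resolvent. The fix is to use the alternative factorization $B'^2-A'^2=\Delta B'+A'\Delta$, so that each unbounded operator is adjacent to its own resolvent (producing the bounded factors $A'^2(A'^2+t^2)^{-1}$ and $B'(B'^2+t^2)^{-1}$); or, more simply, to drop the squaring and argue with first-order resolvents as the paper does.
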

The following simple proof was shown to us by Gian-Michele Graf.
\begin{proof}
  Choose $\eps>0$ so that the spectrum of $D,D+Q$ intersects the set
  $|z|<2\eps$ only in $\{0\}$. Replacing $D$ with $D+i\eps$ if
  necessary, and noting that $J_{D+i\eps}-J_D$ has finite rank, we may
  thus assume that $0$ is not in the spectrum of $D$ or of $D+Q$. One
  then has the following presentation of $J_D$ as a Riemannian
  integral of the resolvent $R_z(D)=(D-z)^{-1}$,
\[ J_D=-\f{1}{\pi}\int_{-\infty}^\infty R_t(D) \d t,\]
convergent in the strong topology. Using a similar expression for
$J_{D+Q}$ and the second resolvent identity
$R_t(D+Q)-R_t(D)=-R_t(D+Q)\,Q\, R_t(D)$,
%$(t-D')^{-1}-(t-D)^{-1}=(t-D')^{-1}(D'-D)(t-D)^{-1}$
we obtain 
\[ J_{D+Q}-J_D=\f{1}{\pi}
\int_{-\infty}^\infty R_t(D+Q)\,Q\, R_t(D) \ \d t.\]
Let $a>0$ be
such that the spectrum of $D,\ D+Q$ does not meet the disk $|z|\le a$.
Then $||R_t(D)||,\ ||R_t(D+Q)||\le (t^2+a^2)^{-1/2}$ for all $t\in\R$.
Hence
\[ || R_t(D+Q)\,Q\, R_t(D) ||_{HS}\le \f{1}{t^2+a^2} ||Q||_{HS},\]
using $||AB||_{HS}\le ||A||\, ||B||_{HS}$. Since $\int (t^2+a^2)^{-1}\d t=\pi/a$, we obtain
the estimate
\begin{equation}\label{eq:estimate}
  ||J_{D+Q}-J_D||_{HS} \le \f{1}{a}||Q||_{HS}.\qedhere\end{equation} 
\end{proof}
A real skew-adjoint Fredholm operator $D$ on $\V$ will be called of
\emph{even} (resp. \emph{odd}) \emph{type} if $\ker(D)$ has even (resp.  odd)
dimension. As in \cite[Section 3.1]{fr:lo2}, we associate to any $D$ of even
type the even polarization defined by the orthogonal complex
structures $J\in \on{O}(\V)$ such that $J-J_D$ is Hilbert-Schmidt. For
$D$ of odd type, we similarly obtain
an odd polarization by viewing $J_D$ as an operator on $\V\oplus \R$ 
(equal to $0$ on $\R$). 

Two skew-adjoint real Fredholm operators $D_1,D_2$ on $\V$ will be
called \emph{equivalent} (written $D_1\sim D_2$) if they define the
same polarization of $\V$, and hence the same Dixmier-Douady algebra
$\A$. Equivalently, $D_i$ have the same parity and $J_{D_1}-J_{D_2}$
is Hilbert-Schmidt. In particular, $D\sim D+Q$ whenever $Q$ is a
skew-adjoint Hilbert-Schmidt operator. In the even case, we
can always choose $Q$ so that $D+Q$ is invertible, while in the odd
case we can choose such a $Q$ after passing to $\V\oplus \R$.

\begin{remark}\label{rem:continuity}
  The estimate \eqref{eq:estimate} show that for fixed $D$ (such that
  $D,D+Q$ have trivial kernel), the difference $J_{D+Q}-J_D\in
  \L_{HS}(\X)$ depends continuously on $Q$ in the Hilbert-Schmidt
  norm. On the other hand, it also depends continuously on $D$
  relative to the norm resolvent topology \cite[page 284]{ree:fu}. 
 This follows from the integral
  representation of $J_{D+Q}-J_D$, together with resolvent identities
  such as
\[R_t(D')-R_t(D)=R_t(D')R_1(D')^{-1}\big(R_1(D')-R_1(D)\big) R_1(D)^{-1} R_t(D).\]
giving estimates  $||R_t(D')-R_t(D)||\le
\,(t^2+a^2)^{-1}\,||R_1(D')-R_1(D)||$ for $a>0$ such that the spectrum
of $D,D'$ does not meet the disk of radius $a$. 
\end{remark}

\subsection{Polarizations of bundles of real Hilbert spaces}
Let $\V\to M$ be a bundle of real Hilbert spaces, with typical fiber
$\mathcal{X}$ and with structure group $\on{O}(\mathcal{X})$ (using
the norm topology). A polarization on $\V$ is a family of
polarizations on $\V_m$, depending continuously on $m$. To make this
precise, fix an orthogonal complex structure $J_0\in \on{O}(\X)$, and
let $\L_{\on{res}}(\X)$ be the Banach space of bounded linear
operators $S$ such that $[S,J_0]$ is Hilbert-Schmidt, with norm
$\|S\|+\|[S,J_0]\|_{HS}$. Define
the \emph{restricted orthogonal group} $\on{O}_{\on{res}}(\X)=
\on{O}(\X)\cap \L_{\on{res}}(\X)$, with the subspace topology. It is a
Banach Lie group, with Lie algebra
$\mf{o}_{\on{res}}(\X)=\mf{o}(\X)\cap \L_{\on{res}}(\X)$.  The unitary
group $\on{U}(\mathcal{X})=\on{U}(\mathcal{X},J_0)$ relative to $J_0$,
equipped with the norm topology is a Banach subgroup of
$\on{O}_{\on{res}}(\mathcal{X})$. For more details on
the restricted orthogonal group, we refer to Araki \cite{ara:bo} or
Pressley-Segal\cite{pr:lo}. 

%$\on{O}_{\on{res}}(\mathcal{X}) =\on{O}(\mathcal{X},J_0)$ be the
%\emph{restricted orthogonal group}, consisting of all $g\in \on{O}(X)$
%such that $[g,J_0]$ is Hilbert-Schmidt. That is,
%$\on{O}_{\on{res}}(\mathcal{X})$ is the subgroup fixing the
%polarization defined by $J_0$. The group
%$\on{O}_{\on{res}}(\mathcal{X})$ is a Banach Lie group, relative to
%the topology where $g_i\to g$ if and only if $||g_i-g||\to 0$ and
%$||[g_i-g,J_0]||_{\on{HS}}\to 0$. The unitary group 
%$\on{U}(\mathcal{X})=\on{U}(\mathcal{X},J_0)$ relative to $J_0$,
%equipped with the norm topology is a
%subgroup of $\on{O}_{\on{res}}(\mathcal{X})$.  
%
\begin{definition}\label{def:polar}
  An even \emph{polarization} of the real Hilbert space bundle
  $\V\to M$ is a reduction of the structure group
  $\on{O}(\mathcal{X})$ to the restricted orthogonal group
  $\on{O}_{\on{res}}(\mathcal{X})$. An odd polarization of $\V$ 
  is an even polarization of $\V\oplus \R$. 
\end{definition}
Thus, a polarization is described by a system of local trivializations
of $\V$ whose transition functions are continuous maps into
$\on{O}_{\on{res}}(\mathcal{X})$. Any global complex structure on $\V$
defines a polarization, but not all polarizations arise in this way.

\begin{proposition}
  Suppose $\V\to M$ comes equipped with a polarization. For $m\in M$
  let $\A_m$ be the Dixmier-Douady algebra defined by the polarization
  on $\V_m$.  Then $\A=\bigcup_{m\in M}\A_m$ is a Dixmier-Douady
  bundle.
\end{proposition}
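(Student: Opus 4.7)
The goal is to show that $\A = \bigcup_m \A_m$ is locally trivial with structure group $\on{PU}(\H) = \on{Aut}(\K(\H))$ in the strong operator topology, for $\H$ a fixed Fock space. My plan is to manufacture local trivializations directly from the polarization data.

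Fix a reference orthogonal complex structure $J_0$ on the model fiber $\X$, and let $\S_0 = \ol{\wedge \X_+}$ be the associated Fock space, so that $\K(\S_0)$ is the model Dixmier--Douady algebra. The polarization gives an open cover $\{U_\alpha\}$ of $M$ together with trivializations $\psi_\alpha \colon \V|_{U_\alpha} \to U_\alpha \times \X$ whose transition maps $g_{\alpha\beta} \colon U_\alpha \cap U_\beta \to \on{O}_{\on{res}}(\X)$ are continuous in the restricted topology. For each point $m \in U_\alpha$, pulling back $J_0$ via $\psi_\alpha$ gives an orthogonal complex structure on $\V_m$ in the prescribed equivalence class, so the Shale--Stinespring theorem (Theorem \ref{th:equivalence}) produces a canonical identification $\A_m \cong \K(\S_0)$, giving a fiberwise bijection $\Psi_\alpha \colon \A|_{U_\alpha} \to U_\alpha \times \K(\S_0)$.

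The heart of the argument is that the resulting transition cocycle $\Psi_\alpha \circ \Psi_\beta^{-1}$ is continuous with values in $\on{Aut}(\K(\S_0))$ equipped with the strong operator topology. For each $g \in \on{O}_{\on{res}}(\X)$, the condition $[g, J_0] \in \L_{\on{HS}}(\X)$ implies $g^{-1} J_0 g - J_0 \in \L_{\on{HS}}(\X)$, so Shale--Stinespring supplies an implementer $U(g) \in \on{U}(\S_0)$, unique up to a phase; the map $g \mapsto [U(g)] \in \on{PU}(\S_0)$ is a group homomorphism, and the induced map $g \mapsto \on{Ad}(U(g))$ lands in $\on{Aut}(\K(\S_0))$. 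The key technical step is checking that this latter map is continuous from the restricted topology on $\on{O}_{\on{res}}(\X)$ to the strong topology on $\on{Aut}(\K(\S_0))$. This is a classical fact (see Pressley--Segal \cite{pr:lo} or Araki \cite{ara:bo}): one reduces to the dense subalgebra of finite-rank operators on $\S_0$, verifies continuity on generators $\rho(v)$ of the Clifford algebra (where it is automatic since $U(g)\rho(v)U(g)^* = \rho(gv)$), and uses that $\on{PU}(\S_0)$ acts continuously on $\K(\S_0)$.

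With this homomorphism in hand, the composition $\on{Ad} \circ U \circ g_{\alpha\beta} \colon U_\alpha \cap U_\beta \to \on{Aut}(\K(\S_0))$ is continuous and serves as the transition cocycle for $\A$, so $\A$ is a locally trivial bundle of $\Z_2$-graded $C^*$-algebras with typical fiber $\K(\S_0)$ and the required structure group. (In the odd case, one applies the same argument to $\V \oplus \R$.) The main obstacle is precisely the continuity of the projective spin representation on $\on{O}_{\on{res}}(\X)$; everything else is a formality once Shale--Stinespring is invoked fiberwise.
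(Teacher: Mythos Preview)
Your proposal is correct and follows essentially the same route as the paper: both use the local trivializations with $\on{O}_{\on{res}}(\X)$-valued transition functions, invoke Shale--Stinespring to obtain projective implementers, and then appeal to Araki for the continuity of the homomorphism $\on{O}_{\on{res}}(\X)\to \on{PU}(\S_0)$ (the paper cites \cite[Theorem 6.10(7)]{ara:bo} directly, whereas you sketch a heuristic before citing). Your phrasing in terms of $\on{Aut}(\K(\S_0))$ rather than $\on{PU}(\S_0)$ is of course equivalent, and your handling of the odd case by passing to $\V\oplus\R$ matches the paper's.
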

\begin{proof}
  We consider the case of an even polarization (for the odd case,
  replace $\V$ with $\V\oplus \R$).  By assumption, the bundle $\V$
  has a system of local trivializations with transition functions in
  $\on{O}_{\on{res}}(\mathcal{X})$. Let $\S_0$ be the spinor module
  over $\Cl(\mathcal{X})$ defined by $J_0$, and $\on{PU}(\S_0)$ the
  projective unitary group with the strong operator topology. A
  version of the Shale-Stinespring theorem \cite[Theorem
  3.3.5]{ply:sp} says that an orthogonal transformation is implemented
  as a unitary transformation of $\S_0$ if and only if it lies in
  $\on{O}_{\on{res}}(\mathcal{X})$, and in this case the implementer
  is unique up to scalar. According to Araki \cite[Theorem
  6.10(7)]{ara:bo}, the resulting group homomorphism
  $\on{O}_{\on{res}}(\mathcal{X})\to \on{PU}(\S_0)$ is continuous.
  That is, $\A$ admits the structure group $\on{PU}(\S_0)$ with the
  strong topology. 
\end{proof}

In terms of the principal $\on{O}_{\on{res}}(\X)$-bundle $\P\to M$
defined by the polarization of $\V$, the Dixmier-Douady bundle is an
associated bundle 
\[ \A=\P\times_{\on{O}_{\on{res}}(\X)}\K(\S_0). \] 

\subsection{Families of skew-adjoint Fredholm operators}\label{subsec:family}
Suppose now that $D=\{D_m\}$ is a family of (possibly unbounded) real
skew-adjoint Fredholm operators on $\V_m$, depending continuously on
$m\in M$ in the norm resolvent sense \cite[page 284]{ree:fu}. That is,
the bounded operators $(D_m-I)^{-1}\in \L(\V_m)$ define a continuous
section of the bundle $\L(\V)$ with the norm topology.  The map
$m\mapsto \dim\ker(D_m)$ is locally constant $\mod 2$. The family $D$
will be called of even (resp. odd) type if all $\dim\ker(D_m)$ are even
(resp. odd).
Each $D_m$ defines an even (resp. odd) polarization of $\V_m$, given by the
complex structures on $\V_m$ or $\V_m\oplus \R$ whose difference with
$J_{D_m}$ is Hilbert-Schmidt. 
\begin{proposition}\label{prop:dd}
  Let $D=\{D_m\}$ be a family of (possibly unbounded) real
  skew-adjoint Fredholm operators on $\V_m$, depending continuously on
  $m\in M$ in the norm resolvent sense. Then the corresponding family
  of polarizations on $\V_m$ depends continuously on $m$ in the sense of
  Definition \ref{def:polar}. That is, $D$ determines a polarization
  of $\V$.
\end{proposition}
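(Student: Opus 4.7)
The plan is to construct, in a neighborhood of any given point $m_0 \in M$, a local section of the polarization bundle that yields the $\on{O}_{\on{res}}(\X)$-reduction: namely, an orthogonal frame $f_m \colon \X \to \V_m$ satisfying $f_m^{-1} J_{D_m} f_m = J_0$ for a fixed reference complex structure $J_0$ on $\X$. Since the structure group of $\V$ is $\on{O}(\X)$ with the norm topology, I first choose an ordinary local trivialization $\psi \colon \V|_U \to U \times \X$, transporting $D$ to a family $\wt D_m$ of skew-adjoint real Fredholm operators on $\X$ continuous in the norm resolvent topology. Replacing $\V$ by $\V \oplus \R$ if necessary reduces to the even case. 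Adding a finite-rank skew-adjoint operator $Q$ supported on $\ker \wt D_{m_0}$ makes $\wt D_{m_0} + Q$ invertible; this leaves the polarization class unchanged by Proposition~\ref{prop:graf}, and by openness of the invertibility condition in the norm resolvent topology, shrinking $U$ I may assume $\wt D_m + Q$ is invertible throughout. I then replace $\wt D_m$ by $\wt D_m + Q$.

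Set $J_m = J_{\wt D_m}$ and let $P_m = \hh(I - iJ_m) \in \L(\X^\C)$ be the spectral projection of $\f{1}{i}\wt D_m$ onto its positive eigenspace. Fix a contour $\Gamma$ enclosing the positive spectrum of $\f{1}{i}\wt D_{m_0}$ and avoiding a disk around $0$. For $m$ close to $m_0$, the Riesz formula
\[ P_m = -\f{1}{2\pi i}\oint_\Gamma \bigl(\ts\f{1}{i}\wt D_m - z\bigr)^{-1} dz \]
is well defined and, combined with norm resolvent continuity of $\wt D_m$, shows that $P_m$ is continuous in operator norm. Moreover $\overline{P_m} = I - P_m$ under the real structure, so Kato's intertwining formula
\[ U_m = \bigl(I - (P_m - P_{m_0})^2\bigr)^{-1/2}\bigl(P_m P_{m_0} + (I-P_m)(I-P_{m_0})\bigr) \]
produces, for $m$ in a neighborhood of $m_0$ where $\|P_m - P_{m_0}\| < 1$, a norm-continuous family of real orthogonal transformations $U_m \in \on{O}(\X)$ with $U_{m_0} = I$ and $U_m P_{m_0} U_m^{-1} = P_m$, equivalently $U_m J_0 U_m^{-1} = J_m$ where $J_0 := J_{m_0}$. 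The modified trivialization $\psi_m' := U_m^{-1}\psi_m$ then satisfies $\psi_m' J_{D_m} (\psi_m')^{-1} = J_0$ identically on its domain.

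Any two such adapted local trivializations over overlapping charts differ by a norm-continuous map into the stabilizer $\on{O}(\X,J_0) = \on{U}(\X)$, whose elements commute with $J_0$; on this subgroup the $\on{O}_{\on{res}}(\X)$-topology agrees with the operator norm topology, as already observed in the paper. Hence the transition functions are continuous maps into $\on{O}_{\on{res}}(\X)$, and the adapted trivializations exhibit the required reduction of the structure group of $\V$ from $\on{O}(\X)$ to $\on{O}_{\on{res}}(\X)$; by construction this reduction realizes fiberwise the polarization class of $J_{D_m}$. The chief technical obstacle is establishing the operator-norm continuity of the spectral projections $P_m$, which requires keeping the spectrum of $\wt D_m$ uniformly bounded away from $0$; this is precisely what the local perturbation ensuring invertibility accomplishes. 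Once this is in place, Kato's intertwining and the identification of transition functions with the unitary subgroup are essentially formal.
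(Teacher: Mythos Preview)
Your strategy mirrors the paper's: perturb to invertibility, then adapt frames via an intertwiner (Kato's formula for you, $g_m=(I-J_mJ_0)\,|I-J_mJ_0|^{-1}$ in the paper). But two steps fail as written.

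The Riesz integral is not valid: $\tfrac{1}{i}\wt D_{m_0}$ is an unbounded self-adjoint operator whose positive spectrum extends to $+\infty$, so no bounded contour $\Gamma$ encloses it. Norm continuity of $m\mapsto J_m$ on the invertible locus does hold, but needs a different argument (for instance via the Cayley transform, which turns norm resolvent continuity into norm continuity of a unitary whose spectrum avoids neighborhoods of $\pm 1$, so that the positive spectral projection becomes a continuous function of it).

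More seriously, your claim that the transition functions lie in $\on{U}(\X)=\on{O}(\X,J_0)$ is false. Each chart is centered at its own $m_0^\alpha$ and uses its own finite-rank perturbation $Q^\alpha$; the adapted frame $\psi'^\alpha$ conjugates $J_{D+Q^\alpha}$ (not $J_D$ itself) to $J_0$. On an overlap the transition $\chi=\psi'^\beta(\psi'^\alpha)^{-1}$ therefore satisfies
$\chi J_0\chi^{-1}-J_0=\psi'^\beta\bigl(J_{D+Q^\alpha}-J_{D+Q^\beta}\bigr)(\psi'^\beta)^{-1}$,
which is Hilbert--Schmidt by Proposition~\ref{prop:graf} but generally nonzero. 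Hence $\chi\in\on{O}_{\on{res}}(\X)$ but not $\on{U}(\X)$, and your reduction of the $\on{O}_{\on{res}}$-topology to the norm topology on $\on{U}(\X)$ does not apply. To conclude that $\chi$ is continuous for the $\on{O}_{\on{res}}$-topology you must show that $m\mapsto J_{D_m+Q^\alpha}-J_{D_m+Q^\beta}$ is continuous in the Hilbert--Schmidt norm; this is precisely what the paper extracts from the estimate \eqref{eq:estimate} together with Remark~\ref{rem:continuity}, and it is the technical core of the argument that your route does not circumvent.
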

\begin{proof}
  We assume that the family $D$ is of even type.  (The odd case is
  dealt with by adding a copy of $\R$.) We will show the existence of
  a system of local trivializations \[\phi_\alpha\colon
  \V|_{U_\alpha}=U_\alpha\times \mathcal{X}\] and skew-adjoint Hilbert-Schmidt
  perturbations $Q_\alpha\in
  \Gamma(\L_{HS}(\V|_{U_\alpha}))$ of $D|_{U_\alpha}$, continuous in the Hilbert-Schmidt norm\footnote{The
    sub-bundle $\L_{HS}(\V)\subset \L(\V)$ carries a topology, where a
    sections is continuous at $m\in M$ if its expression in a local
    trivialization of $\V$ near $m$ is continuous. (This is
    independent of the choice of trivialization.)}, so that
\begin{enumerate}
\item[(i)] $\ker(D_m+Q_\alpha|_m)=0$ for all $m\in U_\alpha$, and
\item[(ii)] $\phi_\alpha\circ
      J_{D+Q_\alpha}\circ\phi_\alpha^{-1}=J_0$.
\end{enumerate}
The transition functions $\chi_{\alpha\beta}=\phi_\beta\circ
\phi_{\alpha}^{-1}\colon U_\alpha\cap U_\beta\to \on{O}(\X)$ will then take values in $\on{O}_\res(\X)$:
Indeed, by Proposition \ref{prop:graf} the difference
$J_{D+Q_\beta}-J_{D+Q_\alpha}$ is Hilbert-Schmidt, and (using
\eqref{eq:estimate} and Remark \ref{rem:continuity}) it is a
continuous section
of $\L_{HS}(\V)$ over $U_\alpha\cap U_\beta$. 
Conjugating by $\phi_\alpha$,
and using (ii) it follows that
\begin{equation} \label{eq:transc}
\chi_{\alpha\beta}^{-1}\circ J_0 \circ
\chi_{\alpha\beta}-J_0\ \colon\  U_\alpha\cap U_\beta\to \L(\X)\end{equation}
takes values in Hilbert-Schmidt operators, and is continuous
in the Hilbert-Schmidt norm.  Hence the $\chi_{\alpha\beta}$ are
continuous functions into $\on{O}_\res(\X)$. 

It remains to construct the desired system of local trivializations. 
It suffices to construct such a trivialization near any given $m_0\in
M$. Pick a continuous family of skew-adjoint Hilbert-Schmidt operators
$Q$ so that $\ker(D_{m_0}+Q_{m_0})=0$. (We may even take $Q$ of finite
rank.)  Hence $J_{D_{m_0}+Q_{m_0}}$ is a complex structure.  Choose an
isomorphism $\phi_{m_0}\colon \V_{m_0}\to \X$ intertwining
$J_{D_{m_0}+Q_{m_0}}$ with $J_0$, and extend to a local trivialization
$\phi\colon \V|_U\to U\times \X$ over a neighborhood $U$ of $m_0$. We
may assume that $\ker(D_m+Q_m)=0$ for $m\in U$, defining complex
structures $J_m=\phi_m\circ J_{D_m+Q_m}\circ \phi_m^{-1}$. By
construction $J_{m_0}=J_0$, and hence $||J_m-J_0||<2$ after $U$
is replaced by a smaller neighborhood if necessary. 
By \cite[Theorem 3.2.4]{ply:sp}, Condition (ii) guarantees that
\[ g_m=(I-J_m
  J_0)\ |I-J_m J_0|^{-1}\]
gives a well-defined continuous map $g\colon U\to \on{O}(\mathcal{X})$
with $J_m=g_m\,J_0\,g_m^{-1}$. 
Hence, replacing $\phi$ with $g\circ \phi$ we obtain a local
trivialization satisfying (i), (ii). 
\end{proof}

To summarize: A continuous family $D=\{D_m\}$ of skew-adjoint real
Fredholm operators on $\V$ determines a polarization of $\V$.  The
fibers $\P_m$ of the associated principal $\on{O}_\res(\X)$-bundle
$\P\to M$ defining the polarization are given as the set of
isomorphisms of real Hilbert spaces $\phi_m\colon \V_m\to \X$ such
that $J_0-\phi_m J_{D_m} \phi_m^{-1}$ is Hilbert-Schmidt. In turn, the
polarization determines a Dixmier-Douady bundle $\A\to M$. 

We list some elementary properties of this construction:
\begin{enumerate}
\item Suppose $\V$ has finite rank. Then $\A=\Cl(\V)$ if the rank is
      even, and $\A=\Cl(\V\oplus \R)$ if the rank is odd. In both
      cases, $\A$ is canonically Morita isomorphic to $\wt{\Cl}(V)$.
\item If $\ker(D)=0$ everywhere, the complex structure $J=D |D|^{-1}$ gives a
      global a spinor module $\S$, defining a Morita trivialization
      \[\S\colon \C\da \A.\]
\item If $\V=\V'\oplus \V''$ and $D=D'\oplus D''$, the corresponding
      Dixmier-Douady algebras satisfy $\A\cong \A'\otimes \A''$,
      provided the kernels of $D'$ or $D''$ are even-dimensional. If
      both $D',D''$ have odd-dimensional kernels, we obtain
      $\A\otimes\Cl(\R^2) \cong \A'\otimes\A''$. In any case, $\A$ is
      canonically Morita isomorphic to $\A'\otimes\A''$.
\item Combining the three items above, it follows that if
      $\V'=\ker(D)$ is a sub-bundle of $\V$, then there is a canonical
      Morita isomorphism %
\[ \wt{\Cl}(\V')\da \A.\]
\item Given a $G$-equivariant family of skew-adjoint Fredholm
      operators (with $G$ a compact Lie group) one obtains a
      $G$-Dixmier-Douady bundle. 
%
%\item Let $\A\to M$ be the Dixmier-Douady bundle defined by a family
%      of skew-adjoint Fredholm operators $D$ on $\V$. Then the family
%      $-D$ defines the opposite bundle $\A^{\on{op}}\to M$.  This
%      follows from the canonical isomorphism, for any complex
%      structure $J$ on $\V$,
%      $\K(\S_{-J})=\K(\S_J^\vee)=\K(\S_J)^{\on{op}}$.
%\item Suppose $C\colon V\to V'$ is an isomorphism of vector bundles
%      over $M$, and let $E'\subset \VV'$ be the image of $E\subset
%      \VV$ under this isomorphism. Then there is a canonical
%      isomorphism of Dixmier-Douady bundles $\A_E\cong \A_{E'}$. 
\end{enumerate}
Suppose $D_1,D_2$ are two families of skew-adjoint Fredholm operators
as in Proposition \ref{prop:dd}. We will call these families
equivalent and write $D_1\sim D_2$ if they define the same
polarization of $\V$, and therefore the same Dixmier-Douady bundle
$\A\to M$. We stress that different polarizations can induce
\emph{isomorphic} Dixmier-Douady bundles, however, the isomorphism is
usually not canonical.

\section{From Dirac structures to Dixmier-Douady bundles}\label{sec:ddd}
We will now use the constructions from the last Section to associate
to every Dirac structure $(\VV,E)$ over $M$ a Dixmier-Douady bundle
$\A_E\to M$, and to every strong Dirac morphism $(\Theta,\om)\colon
(\VV,E)\da (\VV',E')$ a Morita morphism. The construction is functorial
`up to 2-isomorphisms'.

\subsection{The Dixmier-Douady algebra associated to a Dirac structure}
Let $(\VV,E)$ be a Dirac structure over $M$. Pick a Euclidean metric
on $V$, and let $\V\to M$ be the bundle of real Hilbert spaces with
fibers 
\[ \V_m=L^2([0,1],V_m).\] 
Let $A\in \Gamma(\on{O}(V))$ be the orthogonal section corresponding
to $E$, as in Section \ref{subsec:orth1}.  Define a family
$D_E=\{(D_E)_m,\ m\in M\}$ of operators on $\V$, where
$(D_E)_m=\f{\p}{\p t}$ with domain
\begin{equation}\label{eq:domain}
\on{dom}((D_E)_m)=\{f\in \V_m|\ \dot{f}\in
\V_m,\ f(1)=-A_m\,f(0)\}.\end{equation}
The condition that the distributional derivative $\dot{f}$ lies in
$L^2\subset L^1$ implies that $f$ is absolutely continuous; hence the boundary
condition $f(1)=-A_m f(0)$ makes sense. The unbounded operators
$(D_E)_m$ are closed and skew-adjoint (see e.g. \cite[Chapter
VIII]{ree:fu}). By Proposition \ref{prop:resolv} in the Appendix, the
family $D_E$ is continuous in the norm resolvent sense, hence it
defines a Dixmier-Douady bundle $\A_E$ by Proposition \ref{prop:dd}.

The kernel of the operator $(D_E)_m$ is the intersection of
$V_m\subset \V_m$ (embedded as constant functions) with the domain 
\eqref{eq:domain}. That is, 
\[ \ker((D_E)_m)=\ker(A_m+I)=V_m\cap E_m\]
\begin{proposition}\label{prop:cdo}
Suppose $E\cap V$ is a sub-bundle of $V$. Then there is a canonical
Morita isomorphism 
\[ \wt{\Cl}(E\cap V)\da \A_E.\]
In particular there are canonical Morita isomorphisms 
\[ \C\da \A_{V^*},\ \ \ \wt{\Cl}(V)\da \A_V.\]
\end{proposition}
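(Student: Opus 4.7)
My plan is to recognize this statement as a direct application of property (4) from the enumerated list at the end of Section \ref{sec:fam}, applied to the family of operators $D_E=\{(D_E)_m\}$ constructed just above the proposition. Recall that property (4) asserts: if $\ker(D)$ is a sub-bundle of $\V$, then there is a canonical Morita isomorphism $\wt{\Cl}(\ker(D))\da \A$. So the task reduces to verifying the hypothesis for $D_E$ and then identifying the kernel sub-bundle with $E\cap V$ as a Euclidean vector bundle.

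First I would observe, as noted just before the proposition, that $\ker((D_E)_m)$ consists of constant functions $f(t)\equiv v$ with $v\in V_m$ satisfying the boundary condition $v=-A_m v$, i.e.\ $v\in\ker(A_m+I)=V_m\cap E_m$. The embedding $V_m\cap E_m\hra \V_m=L^2([0,1],V_m)$ as constant functions is isometric since $\int_0^1|v|^2\,\d t=|v|^2$. Therefore, under the hypothesis that $E\cap V\subset V$ is a sub-bundle, the fibers $\ker((D_E)_m)$ assemble into a sub-bundle of $\V$ canonically isometric to $E\cap V$.

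With this identification in hand, property (4) of Section \ref{sec:fam} delivers the canonical Morita isomorphism $\wt{\Cl}(E\cap V)\da \A_E$. For the two special cases we only need to compute $E\cap V$. For $E=V^*$ we have $A=I$ (by $E_I=V^*$ from Section \ref{subsec:orth1}), so $E\cap V=\ker(I+I)=0$ and $\wt{\Cl}(0)=\C$. For $E=V$ we have $A=-I$, so $E\cap V=\ker(0)=V$ and we obtain $\wt{\Cl}(V)\da \A_V$.

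The only mild subtlety, and the step I would treat most carefully, is the sub-bundle claim: continuity of $A$ as a section of $\on{O}(V)$ guarantees continuity of the fiberwise orthogonal projection onto $\ker(A+I)$ precisely when the rank of $\ker(A+I)$ is locally constant, which is exactly the hypothesis that $E\cap V$ is a sub-bundle. This matches the condition required in property (4) of Section \ref{sec:fam}, so no further work is needed beyond invoking that property. The remainder of the proof is then purely formal, using the construction recorded in Section \ref{sec:fam} (and the continuity of $D_E$ in the norm-resolvent sense, which is deferred to the appendix).
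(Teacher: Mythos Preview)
Your proposal is correct and is essentially identical to the paper's proof, which invokes item (d) of Section~\ref{subsec:family} after noting that $\ker(D_E)\cong E\cap V$ is a sub-bundle of $\V$. You have simply spelled out more of the details (the isometric embedding as constants, the special cases $A=\pm I$) than the paper's one-line argument.
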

\begin{proof}
  Since $\ker(D_E)\cong E\cap V$ is then a sub-bundle of $\V$, the
  assertion follows from item (d) in Section \ref{subsec:family}.
\end{proof}

\begin{remark}
  The definition of $\A_E$ depends on the choice of a Euclidean metric
  on $V$. However, since the space of Euclidean metrics is
  contractible, the bundles corresponding to two choices are related
  by a canonical 2-isomorphism class of isomorphisms. See Example
  \ref{ex:hom3}.
\end{remark}
\begin{remark}
  The Dixmier-Douady class $\on{DD}(\A_E)=(x,y)$ is an invariant of
  the Dirac structure $(\VV,E)$. It may
  be constructed more directly as follows: Choose $V'$ such that
  $V\oplus V'\cong X\times \R^N$ is trivial. Then $E\oplus (V')^*$
  corresponds to a section of the orthogonal bundle, or equivalently
  to a map $f\colon X\to \on{O}(N)$. The class $\on{DD}(\A_E)$ is the
  pull-back under $f$ of the class over $\on{O}(N)$ whose restriction
  to each component is a generator of $H^3(\cdot,\Z)$ respectively
  $H^1(\cdot,\Z_2)$. (See Proposition \ref{prop:ddclass} below.)
  However, not all classes in $H^3(X,\Z)\times H^1(X,\Z_2)$ are
  realized as such pull-backs.
\end{remark}

The following Proposition shows that the polarization defined by $D_E$
depends very much on the choice of $E$, while it is not affected by
perturbations of $D_E$ by skew-adjoint multiplication operators
$M_\mu$. Let $L^\infty([0,1],\mf{o}(V))$ denote the Banach bundle
with fibers $L^\infty([0,1],\mf{o}(V_m))$.  Its continuous sections
$\mu$ are given in local trivialization of $V$ by continuous maps to
$L^\infty([0,1],\mf{o}(X))$. Fiberwise multiplication by $\mu$ defines
a continuous homomorphism
\[ L^\infty([0,1],\mf{o}(V))\to \L(\V),\ \ \mu\mapsto M_\mu.\]
\begin{proposition}
\begin{enumerate}
\item
Let $E,E'$ be two Lagrangian sub-bundles of $V$. Then $D_E\sim D_{E'}$ 
if and only if $E=E'$. 
\item
Let $\mu\in \Gamma(L^\infty([0,1],\mf{o}(V)))$, defining a continuous
family of skew-adjoint multiplication operators $M_\mu \in \Gamma(\L(\V))$. 
For any Lagrangian sub-bundle $E\subset \VV$ one has
\[ D_E+M_\mu\sim D_E.\]
\end{enumerate}
\end{proposition}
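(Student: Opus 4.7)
The plan is to verify both parts fiberwise, so fix $m\in M$ and work on the Hilbert space $\V = L^2([0,1],V_m)$.

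For part (a), the implication $E=E' \Rightarrow D_E \sim D_{E'}$ is immediate. For the converse, I would reduce (passing to the complexification and restricting to an invariant subspace on which the orthogonal operators $A,A'$ disagree) to the scalar one-dimensional problem of comparing $\partial_t$ on $L^2([0,1],\C)$ under boundary conditions $f(1) = -e^{i\theta} f(0)$ vs.\ $f(1) = -e^{i\theta'} f(0)$ with $\theta \not\equiv \theta' \pmod{2\pi}$. Using the identity $||P_+ - P_+'||_{HS}^2 = ||P_+ P_-'||_{HS}^2 + ||P_- P_+'||_{HS}^2$ for the Hilbert-Schmidt norm of the difference of two orthogonal projections, I would expand the Fourier eigenbasis for one boundary condition in terms of the other. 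The off-diagonal matrix entries between positive-spectrum modes $n\geq 0$ and negative-spectrum modes $m\leq -1$ have modulus of order $1/|n-m|$; counting pairs with a fixed value of $k = n-m$ contributes $k$ terms of size $1/k^2$, giving a divergent series $\sum 1/k$ and hence a non-Hilbert-Schmidt difference.

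For part (b), the key construction is the unitary multiplication operator $(M_U f)(t) = U(t) f(t)$ on $\V$, where $U \in C([0,1], \on{O}(V_m))$ is determined by the initial value problem $U' = -\mu U$, $U(0) = I$. A direct verification of both the differential equation and the boundary condition establishes the intertwining
\[
M_U^{-1}(D_E + M_\mu) M_U = D_{E'}, \qquad A' := U(1)^{-1} A,
\]
whence $J_{D_E+M_\mu} = M_U J_{D_{E'}} M_U^{-1}$. To compare with $J_{D_E}$, I would factor $U(t) = W(t)\cdot e^{tB}$, choosing $B \in \mf{o}(V_m)$ with $e^B = U(1)$ so that $W(0) = W(1) = I$. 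A parallel computation gives $M_{e^{tB}}^{-1} D_E M_{e^{tB}} = D_{E'} + M_B$; the constant perturbation $M_B$ preserves the eigenspace decomposition of $D_{E'}$ and flips the signs of only finitely many eigenvalues, so $J_{D_{E'}+M_B} - J_{D_{E'}}$ is finite-rank. Assembling these pieces yields
\[
J_{D_E+M_\mu} - J_{D_E} = [M_W, J_{D_E}] M_W^{-1} + (\text{finite-rank conjugate}),
\]
and since $\mu \in L^\infty$ makes $U$ Lipschitz, $W$ is a Lipschitz loop in $\on{O}(V_m)$, extending to a Lipschitz (hence $H^{1/2}$-regular) function on $S^1$. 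A Fourier computation analogous to the one in part (a) then bounds $||[M_W, J_{D_E}]||_{HS}^2$ by a constant times $||W||_{H^{1/2}}^2 < \infty$.

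The main obstacle is part (b), since Proposition \ref{prop:graf} does not apply directly: the perturbation $M_\mu$ is bounded but not Hilbert-Schmidt. The argument hinges on a Peller-type commutator estimate for $[M_W, J_{D_E}]$, which succeeds precisely because the $L^\infty$-regularity of $\mu$ propagates (via the ODE for $U$) to just enough Sobolev regularity ($H^{1/2}$) on the endpoint-matching unitary loop $W$.
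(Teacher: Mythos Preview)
Your overall architecture for both parts matches the paper's, but there is a genuine gap in each.

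\textbf{Part (a).} The reduction ``to an invariant subspace on which $A,A'$ disagree'' and then to a scalar problem only works when $A$ and $A'$ commute; in general they share no common one-dimensional invariant subspace. The paper does not reduce: it works directly in $\C^n$, picks eigenvectors $v^{(r)}$ of $A$ and $v'^{(s)}$ of $A'$ with $\l v^{(r)},v'^{(s)}\r\neq 0$ and $e^{2\pi i\lambda^{(r)}}\neq e^{2\pi i\lambda'^{(s)}}$, and then runs exactly your divergent-harmonic-series computation on the matrix elements $\l\phi_k^{(r)},\phi_l'^{(s)}\r$. So your core estimate is right; only the reduction step needs to be dropped.

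\textbf{Part (b).} The claim that ``the constant perturbation $M_B$ preserves the eigenspace decomposition of $D_{E'}$ and flips the signs of only finitely many eigenvalues'' is false unless $B$ commutes with $A'$. Since $e^B=U(1)$ already commutes with $B$, this would force $[B,A]=0$, which you cannot arrange for generic $A$ and $U(1)$. Concretely, the eigenfunctions of $D_{E'}$ are $e^{2\pi i(\lambda'^{(r)}+k-\frac12)t}v'^{(r)}$, and $B$ sends such a function to one involving $Bv'^{(r)}$, which lies in the same eigenspace only if $v'^{(r)}$ is an eigenvector of $B$. So $J_{D_{E'}+B}-J_{D_{E'}}$ is \emph{not} finite-rank in general. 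It is Hilbert--Schmidt, but proving that already requires the commutator estimate you invoke at the end---applied to the \emph{non-closed} path $t\mapsto e^{tB}$, not to a loop. At that point the factorization $U=W\,e^{tB}$ buys you nothing: the paper simply proves the commutator estimate once in the right generality (any continuous $\gamma$ with $\dot\gamma\in L^\infty$ and $A\gamma(0)=\gamma(1)A'$ gives $M_\gamma J_{A'}-J_A M_\gamma\in\L_{HS}$; the integration-by-parts boundary terms cancel precisely because of this mixed boundary relation) and applies it directly to $\gamma=U$. That yields $J_{D_E+M_\mu}=M_U J_{D_{E'}}M_U^{-1}\equiv J_{D_E}\pmod{\L_{HS}}$ in one step.
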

The proof is given in the Appendix, see Propositions \ref{prop:p2} and \ref{prop:p3}.

\subsection{Paths of Lagrangian sub-bundles}\label{subsec:hom}
Suppose $E_s,\ s\in [0,1]$ is a path of Lagrangian sub-bundles of $V$,
and $A_s\in \Gamma(\on{O}(V))$ the resulting path of orthogonal
transformations. In Example \ref{ex:hom3}, we remarked that there is a path of
isomorphisms $\phi_s\colon \A_{E_0}\to \A_{E_s}$ with
$\phi_0=\on{id}$, and the 2-isomorphism class of the resulting
isomorphism $\phi_1\colon \A_{E_0}\to \A_{E_1}$ does not depend 
on the choice of $\phi_s$. It is also clear from the discussion 
in Example \ref{ex:hom3} that the isomorphism defined by a
concatenation of two paths is 2-isomorphic to the composition of the 
isomorphisms defined by the two paths. 

If the family $E_s$ is differentiable, there is a distinguished choice
of the isomorphism $\A_{E_0}\to \A_{E_1}$, as follows.  
\begin{proposition}\label{prop:hom}
  Suppose that $\mu_s:=-\f{\p A_s}{\p s} A_s^{-1}$ defines a
  continuous section of $L^\infty([0,1],\mf{o}(V))$.  Let $M_\gamma\in
  \Gamma(\on{O}(\V))$ be the orthogonal transformation given fiberwise
  by pointwise multiplication by $\gamma_t=A_t A_0^{-1}$.  Then
\[ M_\gamma\circ D_{E_0}\circ M_\gamma^{-1}=D_{E_1}+M_\mu\sim D_{E_1}.\]
Thus  $M_\gamma$ induces an isomorphism $\A_{E_0}\to \A_{E_1}$.     
\end{proposition}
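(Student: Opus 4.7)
The plan is to verify by direct computation that conjugation by $M_\gamma$ transforms $D_{E_0}$ into $D_{E_1} + M_\mu$, both in terms of domain (boundary conditions) and in terms of action. Once this identity is established, the claim that $D_{E_1} + M_\mu \sim D_{E_1}$ is immediate from part (b) of the preceding proposition, and the isomorphism $\A_{E_0} \to \A_{E_1}$ follows from the functoriality of the polarization-to-Dixmier-Douady-bundle construction under bundle automorphisms of $\V$ that intertwine the families of Fredholm operators (up to equivalence).

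For the domain, fix $m\in M$ and let $g\in \V_m=L^2([0,1],V_m)$. Setting $f=M_\gamma^{-1}g$, so $f(t)=\gamma_t^{-1}g(t)=A_0A_t^{-1}g(t)$, the condition $f\in \on{dom}((D_{E_0})_m)$, namely $f(1)=-A_0 f(0)$, becomes $A_0 A_1^{-1} g(1) = -A_0 g(0)$, i.e.\ $g(1)=-A_1 g(0)$, which is precisely the boundary condition defining $\on{dom}((D_{E_1})_m)$. For the action, differentiating $\gamma_t^{-1}g(t)$ using $\tfrac{d}{dt}\gamma_t^{-1}=-\gamma_t^{-1}\dot\gamma_t\gamma_t^{-1}$ and conjugating back by $\gamma_t$ yields
\[
(M_\gamma D_{E_0} M_\gamma^{-1} g)(t) = \dot g(t) - \dot\gamma_t \gamma_t^{-1} g(t).
\]
Since $\gamma_t=A_tA_0^{-1}$, we have $\dot\gamma_t\gamma_t^{-1}=\dot A_t A_t^{-1}=-\mu_t$, so the right hand side equals $(D_{E_1}g)(t)+(M_\mu g)(t)$. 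Hence $M_\gamma D_{E_0} M_\gamma^{-1} = D_{E_1}+M_\mu$ as closed, skew-adjoint operators on $\V$.

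By the hypothesis that $\mu$ is a continuous section of $L^\infty([0,1],\mf{o}(V))$, the operator $M_\mu$ is a continuous section of $\L(\V)$, and part (b) of the previous proposition gives $D_{E_1}+M_\mu \sim D_{E_1}$. Combined with the computation above, this shows $M_\gamma D_{E_0} M_\gamma^{-1} \sim D_{E_1}$, i.e.\ these two families of Fredholm operators define the same polarization on $\V$.

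Finally, since $M_\gamma\in \Gamma(\on{O}(\V))$ is a fiberwise isometric bundle automorphism of $\V$, conjugation by $M_\gamma$ maps the polarization defined by $D_{E_0}$ (consisting of complex structures Hilbert--Schmidt close to $J_{D_{E_0}}$) to complex structures Hilbert--Schmidt close to $M_\gamma J_{D_{E_0}}M_\gamma^{-1}=J_{M_\gamma D_{E_0}M_\gamma^{-1}}=J_{D_{E_1}+M_\mu}$, which by the above defines the same polarization as $J_{D_{E_1}}$. Thus $M_\gamma$ induces an isomorphism of the principal $\on{O}_{\on{res}}(\X)$-bundles defining the polarizations, hence an isomorphism $\A_{E_0}\to \A_{E_1}$ of the associated Dixmier-Douady bundles. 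The only routine point to double-check is that $\mu_t:=-\dot A_t A_t^{-1}$ indeed takes values in $\mf{o}(V)$, which follows by differentiating $A_t A_t^T=I$; no serious obstacle arises in the argument.
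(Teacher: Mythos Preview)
Your proof is correct and follows essentially the same approach as the paper: both verify the domain identity $M_\gamma(\on{dom}(D_{E_0}))=\on{dom}(D_{E_1})$ via the boundary conditions and then compute the conjugated action $A_tA_0^{-1}\frac{\partial}{\partial t}(A_0A_t^{-1}f)=\frac{\partial f}{\partial t}+\mu_t f$. The paper's proof is simply terser, leaving the appeal to the preceding proposition (for $D_{E_1}+M_\mu\sim D_{E_1}$) and the passage to the induced isomorphism $\A_{E_0}\to\A_{E_1}$ implicit, whereas you spell these out.
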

\begin{proof}
  We have
\[ f(1)=-A_0 f(0)\ \ \Leftrightarrow\ \ (M_\gamma f)(1)=-A_1 (M_\gamma f)(0),\]
which shows $M_\gamma(\on{dom}(D_{E_0}))=\on{dom}(D_{E_1})$, and \[A_t
A_0^{-1} \f{\p}{\p t} (A_0 A_t^{-1} f)=\f{\p f}{\p t}+\mu_t f.\]
\end{proof}

\begin{examples}\label{ex:paths}
\begin{enumerate}
%\item\label{it:2}  Suppose $E\cap V=0$ so that $E=\on{Gr}_b$ is the graph of $b\in
%      \Gamma(\mf{o}(V))$. Then $E_s=\on{Gr}_{sb}$ gives a homotopy
%      between $E_0=V^*$ and $E_1=E$, with $A_s=(I+sb/2)/(I-sb/2)$. 
%      This defines an isomorphism $\A_{V^*}\to \A_E$. 
\item\label{it:1} Suppose $E$ corresponds to $A=\exp(a)$ with $a\in
      \Gamma(\mf{o}(V))$. Then $A_s=\exp(s a)$ defines a path
      from $A_0=I$ and $A_1=A$. Hence we obtain an isomorphism
      $\A_{V^*}\to \A_E$. (The 2-isomorphism class of this isomorphism
      may depend on the choice of $a$.)
%A different choice $a'$ with $\exp(a')=A$
%      gives another homotopy, and the resulting isomorphism
%      $\A_{V^*}\to \A_E$ need not be 2-isomorphic to that defined by
%      $a$. Similarly, if $A+I$ is invertible, \eqref{it:2} gives
%      another such isomorphism, which again need not be 2-isomorphic
%      to that defined by $a$.
%
\item Any 2-form $\omega\in \Gamma(\wedge^2 V^*)$ defines an
      orthogonal transformation of $\VV$, given by $(v,\alpha)\mapsto
      (v,\alpha-\iota_v\omega)$. Let $E^\om$ be the image of the
      Lagrangian subbundle $E\subset\VV$ under this transformation.
      The corresponding orthogonal transformations $A,A^\om$ are
      related by 
\[A^\om=(A-\om (A-I))(I-\om(A-I))^{-1},\] 
where we identified the 2-form $\om$ with the corresponding
skew-adjoint map $\om\in\Gamma(\mf{o}(V))$.  Replacing $\omega$ with
$s\omega$, one obtains a path $E_s$ from $E_0=E$ to $E_1=E^\om$,
defining an isomorphism $\A_E\to \A_{E^\om}$.
\end{enumerate}
\end{examples}

\subsection{The Dirac-Dixmier-Douady functor}\label{subsec:functor}
Having assigned a Dixmier-Douady bundle to every Dirac structure on a
Euclidean vector bundle $V$, 
\begin{equation}\label{eq:31} (\VV,E)\ \ \ \leadsto \A_E\end{equation}
we will now associate a Morita morphism to every strong Dirac morphism:  
\begin{equation}\label{eq:32} 
\Big((\Theta,\om)\colon (\VV,E)\da (\VV',E')\Big) \ \ \leadsto\ \ 
\Big((\Phi,\E)\colon \A_E\da \A_{E'}\Big).\end{equation}
Here $\Phi\colon M\to M'$ is underlying the map on the base.  Theorem
\ref{th:functor} below states that \eqref{eq:32} is compatible with
compositions `up to 2-isomorphism'.  Thus, if we take the morphisms
for the category of Dixmier-Douady bundles to be the 2-isomorphism
classes of Morita morphisms, and if we include the Euclidean metric on
$V$ as part of a Dirac structure, the construction \eqref{eq:31},
\eqref{eq:32} defines a functor.  We will call this the
\emph{Dirac-Dixmier-Douady functor}.

The Morita isomorphism $\E\colon \A_E\da \Phi^*\A_{E'}=\A_{\Phi^*E'}$
in \eqref{eq:32} is defined as a composition
\begin{equation}\label{eq:compo} \A_E \da \A_{E\oplus \Phi^* (V')^*}\cong \A_{V^*\oplus
\Phi^* E'}\da \A_{\Phi^* E'},\end{equation}
where the middle map is induced by the path $E_s$ from $E_0=E\oplus
\Phi^* (V')^*$ to $E_1=V^*\oplus \Phi^* E'$, constructed as in Subsection
\ref{subsec:morhom}.  By composing with the Morita isomorphisms
$\A_E\da \A_{E\oplus \Phi^* (V')^*}$ and $\A_{V^*\oplus \Phi^* E'}\da
\A_{E'}$ this gives the desired Morita morphism $\A_E\da \A_{E'}$.
\begin{theorem}\label{th:functor}
i) The composition of the Morita morphisms $\A_E\da \A_{E'}$ and
$\A_{E'}\da \A_{E''}$ defined by two strong Dirac morphisms $
(\Theta,\om)$ and $(\Theta',\om')$ is 2-isomorphic to the Morita
morphism $\A_E\da \A_{E''}$ defined by $(\Theta',\om')\circ
(\Theta,\om)$. ii) The Morita morphism $\A_E\da \A_E$ defined by 
the Dirac morphism $(\on{id}_V,0)\colon (\VV,E)\da (\VV,E)$ is 2-isomorphic to the identity.  
\end{theorem}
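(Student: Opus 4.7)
The plan is to promote the 2-parameter family $\{E_{tt'}\}$ of Lagrangian sub-bundles from Section \ref{subsec:morhom} to a continuous family of Dixmier-Douady bundles via the construction of Section \ref{sec:ddd}, then exploit simple connectivity of the parameter triangle $T = \{(t,t') : t,t' \ge 0,\ t+t' \le 1\}$. Over $T$ the sub-bundles $E_{tt'} \subset \VV \oplus \Phi^*\VV' \oplus (\Phi'\circ\Phi)^*\VV''$ depend continuously on $(t,t')$; applying Section \ref{sec:ddd} fiberwise (with the norm-resolvent continuity supplied by Proposition \ref{prop:resolv}) yields a Dixmier-Douady bundle over $T \times M$ whose restriction to $\{(t,t')\} \times M$ is $\A_{E_{tt'}}$. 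By the natural two-parameter extension of Example \ref{ex:hom3}, any path in $T$ from $p$ to $q$ determines a 2-isomorphism class of isomorphisms $\A_{E_p} \to \A_{E_q}$, depending only on its homotopy class rel endpoints.

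For part (i), I would invoke the description of the three sides of $\partial T$ recalled at the end of Section \ref{subsec:morhom}: $\{(s,0)\}$ is the direct sum of the constant bundle $(V'')^*$ with the standard path of $(\Theta,\om)$; analogously $\{(1-s,s)\}$ corresponds to $V^*$ together with $(\Theta',\om')$, and $\{(0,s)\}$ corresponds to $(V')^*$ together with $(\Theta',\om') \circ (\Theta,\om)$. Using the canonical Morita trivialization $\A_{W^*} \cong \C$ from Proposition \ref{prop:cdo} together with the tensor-product identification from item (c) of Section \ref{subsec:family}, the three sides produce the three Morita morphisms defined by $(\Theta,\om)$, $(\Theta',\om')$, and $(\Theta',\om') \circ (\Theta,\om)$ respectively, in the sense of \eqref{eq:compo}. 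Since $T$ is simply connected, the concatenation of the first two sides is homotopic rel endpoints to the third, yielding the required 2-isomorphism between the composed Morita morphism and the Morita morphism of the composition.

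For part (ii), apply (i) to the special case $(\Theta,\om) = (\Theta',\om') = (\on{id}_V, 0)$, whose composition is again $(\on{id}_V, 0)$. Writing $f$ for the Morita morphism induced by the identity Dirac morphism, (i) yields $f \circ f \cong f$. Since $f$ covers $\on{id}_M$, it is an ordinary Morita isomorphism over $M$, hence invertible with inverse $f^{\on{op}}$; composing both sides on the right with $f^{\on{op}}$ and using associativity of composition up to canonical 2-iso gives $f \cong \on{id}$, as required. The hard part of this plan is not the topological input (simple connectivity of $T$) but the careful bookkeeping of direct-sum identifications: one must verify that each boundary side of $T$, after the reductions via Proposition \ref{prop:cdo} and item (c) of Section \ref{subsec:family}, produces \emph{precisely} the Morita morphism specified by \eqref{eq:compo}, not merely one that is canonically 2-isomorphic to it.
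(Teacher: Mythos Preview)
Your argument for part (i) is essentially the paper's own: both use the 2-parameter family $E_{tt'}$ over the triangle, identify the three sides with the standard paths for $(\Theta,\om)$, $(\Theta',\om')$, and their composite (each stabilized by a summand $V^*$, $(V')^*$, or $(V'')^*$), and then appeal to the homotopy between the concatenation of two sides and the third.  Your closing remark about the direct-sum bookkeeping is apt; the paper absorbs this into the phrase ``the path from $E_{00}$ to $E_{10}$ \dots\ is just the direct sum of $(V'')^*$ with the standard path'' without further comment.

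Your treatment of part (ii), however, is genuinely different and rather slick.  The paper proves (ii) directly: it rescales the inclusion $j_t\colon V\to V\oplus V$ to an isometry $\tilde{j}_t$, uses the orthogonal splitting $V\oplus V = \on{ran}(\tilde{j}_t)\oplus \on{ran}(\tilde{j}_t)^\perp$ to exhibit $\A_{\tilde{E}_t}$ as canonically Morita isomorphic to $\A_E$, and then observes that the resulting family of Morita self-isomorphisms of $\A_E$ interpolates between the identity (at $t=0$) and the morphism defined by $(\on{id}_V,0)$ (at $t=1$).  You instead plug $(\on{id}_V,0)$ into part (i) to obtain $f\circ f\cong f$, then cancel using that any Morita isomorphism over $\on{id}_M$ is invertible.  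This is correct and logically independent of the paper's argument for (ii), since the proof of (i) nowhere invokes (ii).  The advantage of your route is brevity and the elimination of the geometric construction with $\tilde{j}_t$; the advantage of the paper's route is that it produces an explicit homotopy of Morita isomorphisms rather than just an abstract 2-isomorphism extracted from cancellation.
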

\begin{proof}
  i) By pulling everything back to $M$, we may assume that $M=M'=M''$
  and that $\Theta,\Theta'$ induce the identity map on the base. As in
  Section \ref{subsec:morhom}, consider the three Lagrangian subbundles
\[ E_{00}=E\oplus (V')^*\oplus W^*,\ \ E_{10}=V^*\oplus E'\oplus W^*,\ E_{01}=V^*\oplus
(V')^*\oplus E''\] 
of $\VV\oplus \VV'\oplus \VV''$. We have canonical Morita isomorphisms
\[ \A_E\da \A_{E_{00}},\ \ \A_{E'}\da \A_{E_{10}},\ \ \A_{E''}\da
\A_{E_{01}}.\]
The morphism \eqref{eq:compo} may be equivalently
described as a composition 
\[ \A_E \da \A_{E_{00}}\cong \A_{E_{10}}\da \A_{E'},\]
since the path from $E_{00}$ to $E_{10}$ (constructed as in
Subsection \ref{subsec:morhom}) is just the direct sum of $W^*$ with the
standard path from $E\oplus (V')^*$ to $V^*\oplus E'$.  
Similarly, one describes the morphism $\A_{E'}\da \A_{E''}$ as
\[ \A_{E'} \da \A_{E_{10}}\cong \A_{E_{01}}\da \A_{E''}.\]
The composition of the Morita morphisms $\A_E\da \A_{E'}\da \A_{E''}$
defined by $(\Theta,\om),\ (\Theta',\om')$ is hence given by
\[ \A_E  \da \A_{E_{10}}\cong \A_{E_{01}} \cong \A_{E_{01}}\da \A_{E''}.\]
The composition $\A_{E_{10}}\cong \A_{E_{01}} \cong \A_{E_{01}}$ is
2-isomorphic to the isomorphism defined by the concatenation of
standard paths from $E_{00}$ to $E_{10}$ to $E_{01}$. As observed in 
Section \ref{subsec:morhom} this concatenation is homotopic to
the standard path from $E_{00}$ to $E_{01}$, which defines the
morphism $\A_E\da \A_{E''}$ corresponding to $ (\Theta',\om')\circ
(\Theta,\om)$.

ii) We will show that the Morita morphism $\A_E\da \A_{E_0}\cong
\A_{E_1}\da \A_E$ defined by $(\on{id}_V,0)$ is homotopic to the
identity. Here $E_0=E\oplus V^*,\ E_1=V^*\oplus E$, and the
isomorphism $\A_{E_0}\cong \A_{E_1}$ is defined by the standard path
$E_t$ connecting $E_0,E_1$. By definition, $E_t$ is the forward image of $E$ under the
morphism $(j_t,0)\colon \VV\da \VV\oplus \VV$ where 
\[ j_t \colon V\to V\oplus V,\ y\mapsto
((1-t)y,ty).\] 
It is convenient to replace $j_t$ by the isometry,
\[\ti{j}_t=(t^2+(1-t)^2)^{-1/2}\,j_t.\] 
This is homotopic to $j_t$ (e.g.
by linear interpolation), hence the resulting path $\ti{E}_t$ defines
the same 2-isomorphism class of isomorphisms $\A_{E_0}\to \A_{E_1}$.

The splitting of $V\oplus V$ into $V_t:=\on{ran}(\ti{j}_t)$ and $V_t^\perp$
defines a corresponding orthogonal splitting of $\VV\oplus \VV$. The subspace $\ti{E}_t$ 
is the direct sum of the intersections  
\[ \ti{E}_t\cap
\VV_t^\perp=\on{ann}(V_t)=(V_t^\perp)^*,\ \ \ \ti{E}_t\cap
\VV_t=:\ti{E}_t'.\] 
This defines a Morita
isomorphism 
\[ \A_{\ti{E}_t}\da \A_{\ti{E}_t'}\]
%
%and observe that $\ti{E}_t\supset \on{ann}(V_t)\cong (V_t^\perp)^*$.
%The inclusion $V_t^*\hra (V\oplus V)^*$ as the annihilator of
%$V_t^\perp$ defines an inclusion $\VV_t\hra \VV\oplus \VV$, and the
%intersection $\ti{E}_t':=\ti{E}_t\cap \VV_t\subset \VV_t$ is a
%complement to $(V_t^\perp)^*$ in $\ti{E}_t$. Explicitly,
%
%\[ \ti{E}_t=\{((1-t)y,ty,\alpha,\beta)|\
%\Big(y,\f{(1-t)\alpha+t\beta}{t^2+(1-t)^2}\Big)\in E\},\]
%
%and therefore $\ti{E}_t'=\{((1-t)y,ty,(1-t)\gamma,t\gamma)|\
%(y,\gamma)\in E\}$. 

On the other hand, the isometric
isomorphism $V\to V_t$ given by $\ti{j}_t$ extends to an isomorphism 
$\VV\to \VV_t$, taking $E$ to $\ti{E}_t'$. Hence $ \A_{\ti{E}_t'}\cong
\A_E$ canonically. In summary, we obtain a family of Morita isomorphisms
\[ \A_E\da \A_{E_0} \cong \A_{\ti{E}_t}\da \A_{\ti{E}_t'}\cong \A_E.\]
For $t=1$ this is the Morita isomorphism defined by $(\on{id}_V,0)$, while
for $t=0$ it is the identity map $\A_E\to \A_E$.
\end{proof}

%\begin{remark}
%Suppose $(\Theta,0)\colon (\VV,E)\da (\VV',E')$ is Dirac morphism,
%where $\Theta\colon V\to V'$ is a (fiberwise) isomorphism. Thus $V$ is
%identified with the pull-back $\Phi^*V'$ under the map on the base, 
%and similarly $E$ is identified with $\Phi^*E'$. The identification 
%$V\cong \Phi^*V'$ need not preserve metrics, but in any case 
%we obtain an isomorphism $\A_E\cong \A_{\Phi^*E'}$ and hence a 
%Morita morphism $\A_E\da \A_{E'}$. Part (ii) of the Proposition shows
%that this is the same as the Morita morphism defined by 
%the standard path from $E\oplus \Phi^* (V')^*$ to $V^*\oplus \Phi^*E'$. 
%\end{remark}

\subsection{Symplectic vector bundles}\label{subsec:vect}
Suppose $V\to M$ is a vector bundle, equipped with a fiberwise
symplectic form $\om\in \Gamma(\wedge^2 V^*)$. Given a Euclidean
metric $B$ on $V$, the 2-form $\om$ is identified with a skew-adjoint
operator $R_\om$, defining a complex structure $J_\om=R_\om/|R_\om|$
and a resulting spinor module $\S_\om\colon \C\da \Cl(V)$. 
(We may work with $\Cl(V)$ rather than $\wt{\Cl}(V)$, since $V$ has
even rank.) 
\begin{proposition}\label{prop:sympl}
The Morita isomorphism 
\[ S_\om^{\on{op}}\colon \Cl(V)\da \C\]
defined by the $\Spin_c$-structure $S_\om$ is 2-isomorphic to the 
Morita isomorphism  $\Cl(V)\da \A_V$, followed by the 
Morita isomorphism $\A_V\da \C$ defined by the strong Dirac morphism 
$(0,\om)\colon (\VV,V)\da (0,0)$ (cf.  Example
\ref{ex:mor}\eqref{it:sym}). 
\end{proposition}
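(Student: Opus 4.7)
The plan is to compare both sides as Morita isomorphisms $\Cl(V)\da\C$, realizing the right-hand side concretely as the Fock representation of an explicit complex structure on $L^2([0,1],V)$ and matching it against $\S_\omega^{\on{op}}$. The Morita morphism $\A_V \da \C$ coming from $(0,\omega)\colon (\VV,V)\da(0,0)$ is built in Section \ref{subsec:functor} from the standard path joining $V$ and $V^*$ inside $\Lag(\VV)$; by the homotopy invariance discussed in Example \ref{ex:hom3} and Section \ref{subsec:hom}, its 2-isomorphism class is computable using any convenient differentiable path from $A_0 = -I$ to $A_1 = I$ in $\on{O}(V)$.

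I would choose the explicit path $A_s = -\exp(\pi s J_\omega)$, noting that $\exp(\pi J_\omega) = -I$. Then $\mu_s = -\dot A_s A_s^{-1} = -\pi J_\omega$ is constant, and Proposition \ref{prop:hom} produces the orthogonal multiplication operator $M_\gamma$ (with $\gamma_t = \exp(\pi t J_\omega)$) satisfying $M_\gamma \circ D_V \circ M_\gamma^{-1} = D_{V^*} - \pi M_{J_\omega}$. By the invariance $D_E + M_\mu \sim D_E$ under skew-adjoint multiplication perturbations, this identifies the polarization of $\V = L^2([0,1],V)$ produced by the functor with the polarization of $D_V + \pi M_{J_\omega}$ on $\on{dom}(D_V)$. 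A direct computation shows this perturbed operator has trivial kernel (periodic solutions of $\dot f = -\pi J_\omega f$ satisfy $e^{-\pi J_\omega} f(0) = -f(0) = f(0)$, forcing $f=0$), so the Morita trivialization of $\A_V$ is realized as the Fock module of the complex structure $J_{D_V + \pi M_{J_\omega}}$.

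The crucial observation is that $D_V + \pi M_{J_\omega}$ preserves the orthogonal splitting $\V = V \oplus V^\perp$ of constants against mean-zero sections: both $D_V$ and $M_{J_\omega}$ do so, and on the constant subspace $V = \ker(D_V)$ the perturbed operator acts as $\pi J_\omega$, while on $V^\perp$ it is a bounded perturbation of $D_V|_{V^\perp}$. Consequently, the resulting complex structure splits as $J_\omega \oplus J_{D_V|_{V^\perp}}$ (up to Hilbert-Schmidt error), and the corresponding Fock module factors as $\S_\omega \otimes \S'$, where $\S_\omega$ is the spin module for $(V,J_\omega)$ and $\S'$ is the Fock module on $V^\perp$. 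Since the canonical Morita isomorphism $\Cl(V)\da\A_V$ from Proposition \ref{prop:cdo} uses the same identification $\A_V \cong \Cl(V) \otimes \K(\S')$ via item (d) of Section \ref{subsec:family}, composing the two Morita isomorphisms collapses the $\S'$ factors (using $\S' \otimes_{\K(\S')} \S' \cong \C$) and leaves $\S_\omega$ with its natural right $\Cl(V)$-action, i.e., $\S_\omega^{\on{op}}$.

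The main obstacle is the bookkeeping in this last step: one must verify that the tensor-product decomposition of the Fock module supplied by the Dirac-Dixmier-Douady functor is genuinely 2-isomorphic, with matching $\Z_2$-gradings and bimodule structures, to the one used in the canonical Morita isomorphism of Proposition \ref{prop:cdo}, and that the ensuing 2-isomorphism falls within the ambiguity allowed by the functor.
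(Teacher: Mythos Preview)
Your approach is essentially the paper's: replace the standard path by the exponential path $A_s=-\exp(\pi s J_\omega)$, use Proposition~\ref{prop:hom} to conjugate, split the resulting operator on $\V=V\oplus V^\perp$, and read off $\S_\omega$ on the constant summand. The bookkeeping you flag at the end is handled in the paper exactly as you outline.

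There is, however, a genuine gap in your first step. You assert that the 2-isomorphism class ``is computable using any convenient differentiable path from $A_0=-I$ to $A_1=I$ in $\on{O}(V)$.'' Homotopy invariance (Example~\ref{ex:hom3}, Section~\ref{subsec:hom}) only tells you that \emph{homotopic} paths yield 2-isomorphic Morita isomorphisms; it does not say all paths do. Since $\on{SO}(V)$ is not simply connected, there are in general several homotopy classes of paths from $-I$ to $I$, and they can produce Morita trivializations differing by a nontrivial $\Z_2$-graded line bundle. The Dirac--Dixmier--Douady functor prescribes a \emph{specific} path, namely the standard path $E_t$ of Section~\ref{subsec:morhom} for the morphism $(0,\omega)$, and you must check that your exponential path is homotopic to that one, fiberwise and continuously in $m\in M$.

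The paper does this explicitly: it first computes the orthogonal transformations along the standard path as
\[
A_t=\frac{tR_\omega-\tfrac12(1-t)^2}{tR_\omega+\tfrac12(1-t)^2},
\]
and then shows $A_t$ and $\tilde A_t=-\exp(t\pi J_\omega)$ are homotopic rel endpoints by observing that $J_\omega A_t+I$ and $J_\omega\tilde A_t+I$ are invertible for all $t$, so that both paths can be pushed through the Cayley transform into $\mf{o}(V)$ and connected by linear interpolation there. Without this step (or an equivalent argument), your choice of path is unjustified and the conclusion does not follow.
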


\begin{proof}
Consider the standard path for the Dirac morphism
$(0,\om)\colon (\VV,E)\da (0,0)$, 
\[  E_t=\{((1-t)v,\alpha)|\ t\iota_v\om+(1-t)\alpha=0\}\subset \VV,\]
defining $\A_V=\A_{E_0}\cong \A_{E_1}=\A_{V^*}\da
\C$. The path of orthogonal
transformations defined by $E_t$ is
\[ A_t=\f{t{R_\om}-\hh(1-t)^2}{t{R_\om}+\hh(1-t)^2}.\]
We will replace $A_t$ with a
more convenient path $\ti{A}_t$, 
\[ \ti{A}_t=-\exp(t\pi J_\om).\]
We claim that this is homotopic to $A_t$ with the same endpoints. 
Clearly $A_0=-I=-\ti{A}_0$ and $A_1=I=\ti{A}_1$. By considering the
action on any eigenspace of $R_\om$, one checks that the spectrum of
both $J_\om A_t$ and $J_\om\ti{A}_t$ is contained in the half space 
$\on{Re}(z)\ge 0$, for all $t\in [0,1]$. Hence  
\begin{equation}\label{eq:invert}
J_\om A_t+I,\ \ J_\om\ti{A}_t+I
\end{equation} 
are invertible for all $t\in [0,1]$. The Cayley transform $C\mapsto
(C-I)/(C+I)$ gives a diffeomorphism from the set of all $C\in
\on{O}(V)$ such that $C+I$ is invertible onto the vector space
$\mf{o}(V)$. By using the linear interpolation of the Cayley
transforms one obtains a homotopy between $J_\om A_t,\ J_\om \ti{A}_t$, and
hence of $A_t,\ti{A}_t$. 

By Proposition \ref{prop:hom}, the path $\ti{A}_t$ defines an
orthogonal transformation $M_\gamma\in \on{O}(\V)$, taking the complex
structure $J_0$ for $E_0=V^*$ to a complex structure
$J_1=M_\gamma\circ J_0\circ M_\gamma^{-1}$ in the equivalence class
defined by $D_{E_1}$. Consider the orthogonal decomposition
$\V=\V'\oplus \V''$ with $\V'=\ker(D_V)\cong V$. Let $J''$ be the
complex structure on $\V''$ defined by $D_V$, and put $J'=J_\om$.
Since
\[ M_\gamma\circ D_{V^*}\circ M_\gamma^{-1}=D_V+\pi J_\om.\]
we see that $J_1=J'\oplus J''$, hence
$\S_1=\S'\otimes\S''=\S_\om\otimes \S''$. The Morita isomorphism
$\Cl(V)\da \A_V$ is given by the bimodule $\E=\S'' \otimes \Cl(V)$.
Since $\Cl(\V)=\S_\om\otimes\S_\om^{\on{op}}$,
 it follows that that
$\E=\S'' \otimes \Cl(V)=\S_1\otimes \S_\om^{\on{op}}$, and
\[ \S_1^{\on{op}}\otimes_{\A_V}\E=\S_\om^{\on{op}}.\]
\end{proof}

\section{The Dixmier-Douady bundle over the orthogonal group}
\label{subsec:orth}
\subsection{The bundle $\A_{\on{O}(X)}$}
As a special case of our construction, let us consider the
tautological Dirac structure $(\VV_{\on{O}(X)},E_{\on{O}(X)})$ for a
Euclidean vector space $X$. Let $\A_{\on{O}(X)}$ be the corresponding
Dixmier-Douady bundle; its restriction to $\SO(X)$ will be denoted
$\A_{\SO(X)}$. The Dirac morphism
$(\VV_{\on{O}(X)},E_{\on{O}(X)})\times
(\VV_{\on{O}(X)},E_{\on{O}(X)})\da (\VV_{\on{O}(X)},E_{\on{O}(X)})$
gives rise to a Morita morphism
\[ \pr_1^*\A_{\on{O}(X)}\otimes \pr_2^*\A_{\on{O}(X)}\da \A_{\on{O}(X)},\]
which is associative up to 2-isomorphisms. 

\begin{proposition}\begin{enumerate}\item
  There is a canonical Morita morphism $\C\da \A_{\on{O}(X)}$ with
  underlying map the inclusion of the group unit, $\{I\}\hra
  \on{O}(X)$. \item For any orthogonal decomposition $X=X'\oplus X''$,
  there is a canonical Morita morphism
\[ \pr_1^*\A_{\on{O}(X')}\otimes \pr_2^*\A_{\on{O}(X'')} \da \A_{\on{O}(X)}\]
with underlying map the inclusion $\on{O}(X')\times\on{O}(X'')\hra \on{O}(X)$. 
\end{enumerate}
\end{proposition}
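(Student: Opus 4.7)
The plan is to derive both Morita morphisms by applying the Dirac-Dixmier-Douady functor of Section \ref{subsec:functor} to suitable strong Dirac morphisms, in each case taking the underlying vector bundle map to be the identity on $X$ with zero 2-form. Since $\ker(\on{id}_X, 0) = 0$, the strongness condition is automatic.

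For part (a), I would observe that the restriction of the tautological Dirac structure to the point $\{I\} \in \on{O}(X)$ is $(\XX, X^*)$, because $E_I = X^*$ under the parametrization of Section \ref{subsec:orth1}. The pair $(\on{id}_X, 0)$ over the inclusion $\{I\} \hra \on{O}(X)$ thus defines a strong Dirac morphism $(\XX, X^*) \da (\VV_{\on{O}(X)}, E_{\on{O}(X)})$, since the forward image of $X^*$ is $X^* = E_I$. Applying the Dirac-Dixmier-Douady functor produces a Morita morphism $\A_{X^*} \da \A_{\on{O}(X)}$ covering $\{I\}\hra \on{O}(X)$. Composing with the canonical Morita trivialization $\C \da \A_{X^*}$ of Proposition \ref{prop:cdo} yields the desired morphism $\C \da \A_{\on{O}(X)}$.

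For part (b), the key computation is that for block-diagonal $A = A_1 \oplus A_2$ with $A_i \in \on{O}(X_i)$, the formula
\[ E_A = \{((I - A^{-1})v,\, (I + A^{-1})\ts\f{v}{2}) \mid v \in X\} \]
splits as an external direct sum $E_{A_1} \oplus E_{A_2}$ inside $\XX = \XX' \oplus \XX''$. Consequently, the restriction of $(\VV_{\on{O}(X)}, E_{\on{O}(X)})$ along $\on{O}(X') \times \on{O}(X'') \hra \on{O}(X)$ is canonically identified with the product Dirac structure $(\VV_{\on{O}(X')} \oplus \VV_{\on{O}(X'')},\ \pr_1^* E_{\on{O}(X')} \oplus \pr_2^* E_{\on{O}(X'')})$. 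Again $(\on{id}_X, 0)$ over this inclusion provides a strong Dirac morphism, and the functor yields a Morita morphism out of $\A_{\pr_1^* E_{\on{O}(X')} \oplus \pr_2^* E_{\on{O}(X'')}}$. I would then invoke property (c) of Section \ref{subsec:family}: the operator $D_{E' \oplus E''}$ on $L^2([0,1], X') \oplus L^2([0,1], X'')$ is the orthogonal direct sum of the factor operators, so the associated Dixmier-Douady bundle is canonically Morita isomorphic to $\pr_1^* \A_{\on{O}(X')} \otimes \pr_2^* \A_{\on{O}(X'')}$.

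The main conceptual point — rather than a serious obstacle — is the word \emph{canonical} in both statements: one must check that the resulting 2-isomorphism class of Morita morphisms is independent of auxiliary choices such as the Euclidean metric used to define the individual $\A_E$'s. This is precisely what Theorem \ref{th:functor} guarantees for the functor side, while property (c) of Section \ref{subsec:family} and the preceding remark about contractibility of the space of Euclidean metrics handle the product identification. Together these assemble into the two canonical Morita morphisms asserted.
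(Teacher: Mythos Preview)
Your proposal is correct and rests on the same two observations as the paper's proof: that $E_{\on{O}(X)}|_{I}=X^*$ (giving the Morita trivialization of Proposition~\ref{prop:cdo}) and that $E_{\on{O}(X)}|_{\on{O}(X')\times\on{O}(X'')}=E_{\on{O}(X')}\times E_{\on{O}(X'')}$ (giving the tensor product via property~(c) of Section~\ref{subsec:family}). The paper simply states these restriction facts directly rather than routing through the functor with $(\on{id}_X,0)$; note also that the Euclidean metric on $X$ is fixed data here, so your canonicity discussion about the space of metrics is unnecessary.
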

\begin{proof}
The Proposition follows since the restriction of $E_{\on{O}(X)}$ to
$I$ is $X^*$, while the restriction to $\on{O}(X')\times\on{O}(X'')$
is $E_{\on{O}(X')}\times E_{\on{O}(X'')}$. 
\end{proof}
The action of $\on{O}(X)$ by conjugation lifts
to an action on the bundle $\VV_{\on{O}(X)}$, preserving the Dirac
structure $E_{\on{O}(X)}$. Hence $\A_{\on{O}(X)}$ is an
$\on{O}(X)$-equivariant Dixmier-Douady bundle.

The construction of $\A_{\on{O}(X)}$, using the family of boundary
conditions given by orthogonal transformations, is closely related to
a construction given by Atiyah-Segal in \cite{at:twi}, who also
identify the resulting Dixmier-Douady class. The result is most nicely
stated for the restriction to $\SO(X)$; for the general case use an
inclusion $\on{O}(X)\hra \SO(X\oplus \R)$. 
\begin{proposition}  \label{prop:ddclass} \cite[Proposition 5.4]{at:twi}
  Let $(x,y)=\on{DD}(\A_{\SO(X)})$ be the Dixmier-Douady class. 
\begin{enumerate}
\item For $\dim X\ge 3,\ \dim X\not=4$ the class $x$ generates
      $H^3(\SO(X),\Z)=\Z$.
\item For $\dim X\ge 2$ the class $y$ 
  generates $H^1(\SO(X),\Z_2)=\Z_2$.
\end{enumerate}
\end{proposition}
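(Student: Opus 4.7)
The strategy is to reduce to the analogous computation in Atiyah-Segal \cite[Proposition 5.4]{at:twi}. Their Dixmier-Douady bundle on $\SO(X)$ is built from a family of skew-adjoint operators of essentially the same form as our $D_{E_{\SO(X)}}$, so the main task is to produce a canonical Morita isomorphism between the two constructions; granting that, the proposition follows from their computation. To produce this isomorphism, I would complexify: writing $L^2([0,1], X^\C) = L^2([0,1], X) \oplus \i L^2([0,1], X)$, the real family $D_A = \partial/\partial t$ with boundary condition $f(1) = -A f(0)$ extends $\C$-linearly to a complex skew-adjoint Fredholm family whose associated polarization agrees (after a unitary change of variables implementing the boundary condition) with the energy polarization used by Atiyah-Segal, which arises from the Dirac operator on a circle with holonomy $-A$. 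By Proposition \ref{prop:dd} (and the polarization-independence discussed in Section \ref{subsec:family}), both constructions then give canonically Morita-equivalent Dixmier-Douady bundles.

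As a direct verification of the $H^1$-statement, one can check non-triviality of $y$ on a loop generating $\pi_1(\SO(X)) = \Z_2$ (for $\dim X\ge 3$; the case $\dim X = 2$ with $\SO(2) = S^1$ is handled separately by the same argument). Take the loop $\gamma(t) = R_{2\pi t}$ rotating by angle $2\pi t$ in an oriented 2-plane $P \subset X$ and fixing $P^\perp$. Along this loop, $\dim\ker(D_{A_t}) = \dim\ker(A_t + I)$ equals $0$ for $t \ne 1/2$ and $2$ at $t = 1/2$, so by Theorem \ref{th:equivalence} the implementer of the spinor-module equivalence produced by going once around the loop has odd parity. This is precisely the statement that the pullback Morita trivialization of $\wh{\A}_{\SO(X)}$ over the loop does not admit a compatible $\Z_2$-grading, so $y(\gamma) \ne 0$, which forces $y$ to be the generator of $H^1(\SO(X), \Z_2) = \Z_2$.

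For the $H^3$-statement I would exploit functoriality under the embeddings $\SO(X_0) \hra \SO(X)$ coming from an orthogonal splitting $X = X_0 \oplus X_0^\perp$ with $\dim X_0 = 3$. The restriction of $E_{\SO(X)}$ to $\SO(X_0) \subset \SO(X)$ is, as a Dirac structure on $V_{\SO(X_0)} = \SO(X_0) \times X$, the direct sum of $E_{\SO(X_0)}$ (on the $X_0$-factor) with the trivial Dirac structure $(X_0^\perp)^*$ (on the complement). Applying Proposition \ref{prop:cdo} to the complementary summand and item (c) of Section \ref{subsec:family} gives a canonical Morita isomorphism $\A_{\SO(X)}|_{\SO(X_0)} \cong \A_{\SO(X_0)} \otimes \wt{\Cl}(X_0^\perp)$, whose Dixmier-Douady class is that of $\A_{\SO(X_0)}$. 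This reduces the computation of $x$ to the case $\dim X = 3$, where Atiyah-Segal show directly that the class is the generator of $H^3(\SO(3), \Z) = \Z$. The exclusion $\dim X = 4$ reflects the splitting $H^3(\SO(4), \Z) = \Z \oplus \Z$, only one of whose generators is picked out by our construction.

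The main obstacle is making the reduction to Atiyah-Segal precise: they work with complex operators on the circle while we work with real operators on an interval, so careful bookkeeping of boundary conventions, of the real-versus-complex polarization (in particular verifying that complexification does not change the Morita equivalence class), and of the $\Z_2$-grading conventions is required. Once this bridge is built, the class computation itself is their result; the parity-flow argument for $y$ is otherwise immediate from Shale-Stinespring.
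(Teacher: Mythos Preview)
Your primary strategy---matching $\A_{\SO(X)}$ with the Atiyah--Segal bundle and then quoting their computation---is exactly what the paper does, though by a different mechanism: in Section~\ref{subsec:loop} the paper uses the multiplication operators $M_\gamma$ for $\gamma \in \P\SO(X)$ to exhibit $\A_{\SO(X)}$ directly as the associated bundle $\P\SO(X) \times_{L\SO(X)} \K(\S_0)$, which is Atiyah--Segal's loop-group model. No complexification bookkeeping is needed.

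The paper also sketches a genuinely different, self-contained argument in Appendix~\ref{app:so2}. Rather than your non-equivariant restriction to $\SO(3)$, it works $\SO(n)$-equivariantly and restricts to $\SO(2)$: the map $H^3_{\SO(n)}(\SO(n),\Z) \to H^3_{\SO(2)}(\SO(2),\Z)$ is an isomorphism for $n \ne 4$ (and likewise for $H^1(\cdot,\Z_2)$), so it suffices to identify the \emph{equivariant} class of $\A_{\SO(2)}$. This is done by an explicit semi-infinite-wedge model $\A_{(1,1)}$ whose class is read off by inspection. Your reduction to $\SO(3)$ would require checking that the non-equivariant restriction $H^3(\SO(n),\Z) \to H^3(\SO(3),\Z)$ is an isomorphism, which you do not address, and in any case still ends by citing Atiyah--Segal, so it does not buy independence from their result.

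Your direct argument for $y$ is correct in spirit but the invocation of Theorem~\ref{th:equivalence} is too compressed. Shale--Stinespring gives the parity $\tfrac12\dim\ker(J_1+J_2)$ of an implementer between two fixed complex structures, and for a closed loop the endpoint structures coincide, so applied na\"ively it yields even parity. What you actually need is that the monodromy picks up its only odd contribution at the jump across $t=1/2$, where $J_{D_{A_t}}$ flips sign on the $2$-dimensional space $\ker(A_{1/2}+I)$, so that $\tfrac12\dim\ker(J_{1/2-\epsilon}+J_{1/2+\epsilon})=1$; away from $t=1/2$ the complex structures vary continuously and the corresponding implementers are even. Spelling this factorization out makes the argument complete.
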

Atiyah-Segal's proof uses an alternative construction $\A_{\SO(X)}$ in
terms of loop groups (see below). Another argument is sketched in
Appendix \ref{app:so2}.

\subsection{Pull-back under exponential map}
Let $(\VV_{\mf{o}(X)},\,E_{\mf{o}(X)})$ be as in Section
\ref{subsec:exp}, and let $\A_{\mf{o}(X)}$ be the resulting
$\on{O}(X)$-equivariant Dixmier-Douady bundle. Since
$E_{\mf{o}}(X)|_a=\on{Gr}_a$, its intersection with $X\subset \XX$ is
trivial, and so $\A_{\mf{o}(X)}$ is Morita trivial.
Recall the Dirac morphism
$(\Pi,-\varpi)\colon (\VV_{\mf{o}(X)},\,E_{\mf{o}(X)})\da
(\VV_{\on{O}(X)},E_{\on{O}(X)})$,
with underlying map $\exp\colon \mf{o}(X)\to \on{O}(X)$.  We
had shown that it is a strong Dirac morphism over the subset
$\mf{o}(X)_\natural$ where the exponential map has maximal rank, or
equivalently where $\Pi_a=(I-e^{-a})/a$ is invertible. One hence obtains a Morita morphism
\[ \A_{\mf{o}(X)}|_{\mf{o}(X)_\natural}\da \A_{\on{O}(X)}.\]
Together with the Morita trivialization $\C\da \A_{\mf{o}(X)}$, this
gives a Morita trivialization of $\exp^*\A_{\on{O}(X)}$ over
$\mf{o}(X)_\natural$.  

On the other hand, $\exp^*E_{\on{O}(X)}$ is the
Lagrangian sub-bundle of $\mf{o}(X)\times\XX$ defined by the map
$a\mapsto \exp(a)\in \on{O}(X)$.  Replacing $\exp(a)$ with $\exp(sa)$,
one obtains a homotopy $E_s$ between $E_1=\exp^*E_{\on{O}(X)}$ and
$E_0=X^*$, hence another Morita trivialization of
$\exp^*\A_{\on{O}(X)}$ (defined over all of $\mf{o}(X)$). Let $L\to
\mf{o}(X)_{\natural}$ be the $\on{O}(X)$-equivariant line bundle
relating these two Morita trivializations.
\begin{proposition}\label{prop:exp2}
Over the component containing $0$, the line bundle $L\to
\mf{o}(X)_\natural$ is $\on{O}(X)$-equivariantly trivial.  In other
words, the two Morita trivializations of
$\exp^*\A_{\on{O}(X)}|_\natural$ are 2-isomorphic over the component
of $\mf{o}(X)_\natural$ containing $0$.
\end{proposition}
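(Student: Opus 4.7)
The plan is to show that the component $Y$ of $\mf{o}(X)_\natural$ containing $0$ is $\on{O}(X)$-equivariantly contractible to $\{0\}$; the claim then reduces to verifying that the fiber $L_0$ is trivial as a one-dimensional $\on{O}(X)$-representation.

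First I would identify $Y$ explicitly. The operator $\Pi_a=(I-e^{-a})/a$ fails to be invertible exactly when $a$ has a nonzero eigenvalue in $2\pi i\Z$, so $Y$ consists of those $a\in\mf{o}(X)$ whose (purely imaginary) eigenvalues all lie strictly in the band $(-2\pi i,2\pi i)$. Since $sa$ has eigenvalues $s\lambda_j$ for $s\in[0,1]$, the straight-line homotopy $F_s(a)=sa$ preserves $Y$; as scaling commutes with the conjugation action on $\mf{o}(X)$, $F_s$ is an $\on{O}(X)$-equivariant deformation retraction of $Y$ onto $\{0\}$. Consequently every $\on{O}(X)$-equivariant line bundle on $Y$ is equivariantly isomorphic to the pullback of its fiber at $0$, and equivariant triviality of $L$ reduces to triviality of $L_0$ as an $\on{O}(X)$-representation.

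Next I would compute $L_0$ by specializing both Morita trivializations of $\exp^*\A_{\on{O}(X)}$ to $a=0$. At this point $\exp(0)=I$, $E_{\mf{o}(X)}|_0=\on{Gr}_0=X^*$, $E_{\on{O}(X)}|_I=X^*$, $\Pi_0=I$ and $\varpi_0=0$, while the path $s\mapsto\exp(sa)$ collapses to the constant path at $I$. For the second trivialization the path of Lagrangian subbundles is constant at $X^*$, so the induced Morita isomorphism $\A_{X^*}\to\A_{X^*}$ is the identity, and the trivialization reduces to the canonical one $\C\da\A_{X^*}$ of Proposition~\ref{prop:cdo}. For the first, the Dirac morphism $(\Pi,-\varpi)$ specializes at $a=0$ to the identity Dirac morphism $(I,0)\colon(\VV,X^*)\da(\VV,X^*)$; by Theorem~\ref{th:functor}(ii) the associated Morita morphism is 2-isomorphic to the identity, and composing with $\C\da\A_{\mf{o}(X)}|_0=\A_{X^*}$ recovers the same canonical trivialization. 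Thus $L_0$ is canonically trivial, and the canonicity of the identification makes it equivariantly trivial as an $\on{O}(X)$-representation.

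The main obstacle is the last step: verifying that the sequence of standard paths of Lagrangian subbundles used in the composition defining the Morita morphism from $(\Pi,-\varpi)$ all collapse canonically to constant paths at $a=0$, so that the 2-isomorphism provided by Theorem~\ref{th:functor}(ii) is genuinely $\on{O}(X)$-equivariant and not twisted by the determinant character of $\on{O}(X)$. Granting this, the equivariant deformation retraction from the first step yields the desired equivariant triviality of $L$ on the component of $\mf{o}(X)_\natural$ containing $0$.
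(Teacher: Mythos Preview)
Your approach is essentially the same as the paper's: reduce to the fiber at $0$ via the $\on{O}(X)$-equivariant linear retraction $a\mapsto sa$, and then observe that both Morita trivializations coincide there with the obvious one for $\A_{\on{O}(X)}|_I$. The paper's proof is terser---it simply asserts that the linear retraction preserves the zero component and that the two trivializations agree at $0$---while you supply the explicit description of $Y$ via the eigenvalue band $(-2\pi i,2\pi i)$ and invoke Theorem~\ref{th:functor}(ii) to justify the second step; your stated concern about equivariance is one the paper passes over in silence.
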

\begin{proof}
The linear retraction of $\mf{o}(X)$ onto the origin preserves the 
component of $\mf{o}(X)_\natural$ containing $0$. Hence it suffices to 
show that the $\on{O}(X)$-action on the fiber of $L$ at $0$ is trivial. 
But this is immediate since both Morita trivializations of 
$\exp^*\A_{\on{O}(X)}$ at $0\in \mf{o}(X)_\natural$ coincide 
with the obvious Morita trivialization of $\A_{\on{O}(X)}|_e$. 
\end{proof}

%\begin{remark}
%  More generally, if $V\to M$ is any Euclidean vector bundle, one can
%  define a Dixmier-Douady bundle $\A_{\on{O}(V)}\to \on{O}(V)$ over
%  the orthogonal bundle, by carrying out the construction for
%  $\on{O}(X)$ fiverwise. The bundle $\A_E$ defined by a Lagrangian
%  sub-bundle $E$ is then the pull-back of $\A_{\on{O}(V)}$ under the
%  corresponding orthogonal section $A\in \Gamma(\on{O}(V))$.
% \end{remark}
%
\subsection{Construction via loop groups}\label{subsec:loop}
The bundle $\A_{\SO(X)}$ has the following description in terms of
loop groups (cf. \cite{at:twi}). Fix a Sobolev level $s>1/2$, and let
$\P\SO(X)$ denote the Banach manifold of paths $\gamma\colon \R\to \SO(X)$
of Sobolev class $s+1/2$ such that $\pi(\gamma):=\gamma(t+1)\gamma(t)^{-1}$ is
constant. (Recall that for manifolds $Q,P$, the maps $Q\to P$ of
Sobolev class greater than $k+\dim Q/2$ are of class $C^k$.)  The map
\[\pi\colon \P\SO(X)\to \SO(X),\ \gamma\mapsto \pi(\gamma)\] is an
$\SO(X)$-equivariant principal bundle, with structure group
the loop group $L\SO(X)=\pi^{-1}(e)$. Here elements of $\SO(X)$ acts
by multiplication from the left, while loops $\lambda\in L\SO(X)$ acts
by $\gamma\mapsto \gamma\lambda^{-1}$.  Let $\X=L^2([0,1],X)$ carry the complex
structure $J_0$ defined by $\f{\p}{\p t}$ with anti-periodic boundary
conditions, and let $\S_0$ be the resulting spinor module.  The action
of the group $L\SO(X)$ on $\X$ preserves the polarization defined by
$J_0$, and defines a continuous map $L\SO(X)\to
\on{O}_\res(\X)$. Using its composition with the map
$\on{O}_\res(\X)\to \on{PU}(\S_0)$, we have:
\begin{proposition}
  The Dixmier-Douady bundle $\A_{\SO(X)}$ is an associated bundle
  $\P\SO(X)\times_{L\SO(X)}\K(\S_0)$.
\end{proposition}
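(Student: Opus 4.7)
The plan is to exhibit an $L\SO(X)$-equivariant map of principal bundles $\Psi\colon \P\SO(X)\to \P$, where $\P\to\SO(X)$ is the principal $\on{O}_{\on{res}}(\X)$-bundle underlying $\A_{\SO(X)}$ (constructed from the family $D_{E_{\SO(X)}}$ as in Section \ref{sec:fam}), covering the group homomorphism $L\SO(X)\hookrightarrow \on{O}_{\on{res}}(\X)$, $\lambda\mapsto M_\lambda$. For $\gamma\in \P\SO(X)$ with $\pi(\gamma)=A$, define the unitary
\[
U_\gamma\colon \X\to \V_A=L^2([0,1],X),\qquad (U_\gamma f)(t)=\gamma(t)f(t),
\]
and set $\Psi(\gamma)=U_\gamma^{-1}\in \P_A$. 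The relation $\gamma(1)=A\gamma(0)$ ensures that multiplication by $\gamma$ carries the anti-periodic domain of $D_0$ on $\X$ bijectively onto $\on{dom}(D_A)=\{g:g(1)=-Ag(0)\}$, so $U_\gamma$ is a well-defined unitary intertwining the two domains.

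The central step is to verify that $J_0-U_\gamma^{-1}J_{D_A}U_\gamma$ is Hilbert-Schmidt, so that $\Psi(\gamma)$ really lies in $\P_A$. A direct calculation gives
\[
U_\gamma^{-1}D_A U_\gamma \;=\; D_0+M_\mu,\qquad \mu(t)=\gamma(t)^{-1}\dot\gamma(t)\in \mf{o}(X).
\]
Because $\gamma$ is of Sobolev class $H^{s+1/2}$ with $s>1/2$, $\gamma$ is continuous and $\mu$ is a well-defined element of $L^\infty([0,1],\mf{o}(X))$. Proposition \ref{prop:p3} of the Appendix then yields $D_0+M_\mu\sim D_0$, which is exactly the Hilbert-Schmidt condition required. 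This is the main obstacle; everything else is essentially formal.

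Equivariance is immediate: the relation $U_{\gamma\lambda}=U_\gamma\circ M_\lambda$ holds for $\lambda\in L\SO(X)$, since $M_\lambda$ preserves the anti-periodic condition (as $\lambda$ is a loop) and $M_\lambda\in \on{O}_{\on{res}}(\X)$ by the standard representation theory of $L\SO(X)$ on the Fock space. Continuity of $\Psi$ relative to the Sobolev topology on $\P\SO(X)$ and the topology of $\P$ follows by combining norm resolvent continuity of $A\mapsto D_A$ (Proposition \ref{prop:resolv}) with the Hilbert-Schmidt continuity of $Q\mapsto J_{D+Q}-J_D$ from Remark \ref{rem:continuity}, applied to the perturbation $Q=M_\mu$.

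Finally, $\Psi$ induces a morphism of associated bundles
\[
\P\SO(X)\times_{L\SO(X)}\K(\S_0)\longrightarrow \P\times_{\on{O}_{\on{res}}(\X)}\K(\S_0)=\A_{\SO(X)}
\]
which on each fiber is the $C^*$-algebra isomorphism implemented by any lift of $U_\gamma^{-1}$ to $\on{PU}(\S_0)$. Surjectivity of $\pi$ (take $\gamma(t)=\exp(ta)$ for any $a\in\mf{o}(X)$ with $\exp(a)=A$) together with fiberwise bijectivity of $\Psi$ yields the claimed isomorphism of Dixmier-Douady bundles.
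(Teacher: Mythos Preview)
Your argument is essentially the paper's own: pointwise multiplication by $\gamma$ intertwines the anti-periodic boundary condition with the condition $f(1)=-Af(0)$, and conjugation by $M_\gamma$ carries $D_0$ to $D_A$ plus a multiplication operator, so that Proposition~\ref{prop:hom} (equivalently the appendix propositions) gives the required Hilbert--Schmidt control. The paper phrases this as a direct map $\P\SO(X)\times\K(\S_0)\to\A_{\SO(X)}$ rather than going through the principal $\on{O}_{\on{res}}(\X)$-bundle $\P$, but the content is identical.

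Two small points of imprecision. First, the assertion ``$\mu=\gamma^{-1}\dot\gamma\in L^\infty$'' is not justified for $s\in(\tfrac12,1]$: in that range $\dot\gamma$ lies only in $H^{s-1/2}\subset L^2$, not necessarily in $L^\infty$. Fortunately the proof of Proposition~\ref{prop:p2} only uses $\dot\gamma\in L^2$ (the eigenfunctions $\phi^{(r)}_k$ are bounded), so the conclusion stands; you should just quote that weaker hypothesis. Second, your final sentence invokes ``fiberwise bijectivity of $\Psi$'', which is false as stated: $\Psi$ maps an $L\SO(X)$-torsor into an $\on{O}_{\on{res}}(\X)$-torsor, and these have different dimensions. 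What you need (and what is automatic) is that the \emph{induced} map on associated $\K(\S_0)$-bundles is a fiberwise isomorphism; this follows because any equivariant map of principal bundles covering the identity, together with the common fiber $\K(\S_0)$, yields a fiberwise bijection on the associated bundles.
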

\begin{proof}
Given $\gamma\in \P\SO(X)$, consider the operator
$M_\gamma$ on $\X=L^2([0,1],X)$ of pointwise multiplication by $\gamma$. 
As in Proposition \ref{prop:hom}, we see that $M_\gamma$ takes 
the boundary conditions $f(1)=-f(0)$ to 
$(M_\gamma f)(1)=-\pi(\gamma) (M_\gamma f)(0)$, and induces an  
isomorphism $\K(\S_0)=\A_I\to \A_{\pi(\gamma)}$.  This defines a map 
\[ \P\SO(X)\times \K(\S_0)\to \A_{\SO(X)}\]
with underlying map $\pi\colon \P\SO(X)\to \SO(X)$. This map is
equivariant relative to the action of $L\SO(X)$, and descends to the
desired bundle isomorphism.
\end{proof}
In particular $\pi^*\A_{\SO(X)}=\P\SO(X)\times \K(\S_0)$ has a Morita
trivialization defined by the trivial bundle $\E_0=\P\SO(X)\times
\S_0$. The Morita trivialization is $\wh{L\SO}(X)\times
\SO(X)$-equivariant, using the central extension of the loop group
obtained by pull-back of the central extension $\on{U}(\S_0)\to
\on{PU}(\S_0)$.

\section{q-Hamiltonian $G$-spaces}\label{sec:qham}
In this Section, we will apply the correspondence between Dirac
structures and Dixmier-Douady bundles to the theory of group-valued
moment maps \cite{al:mom}. Most results will be immediate consequences
of the functoriality properties of this correspondence. Throughout
this Section, $G$ denotes a Lie group, with Lie algebra $\g$.  We
denote by $\xi^L,\xi^R\in\mf{X}(G)$ the left,right invariant vector
fields defined by the Lie algebra element $\xi\in\g$, and by
$\theta^L,\theta^R\in\Om^1(G,\g)$ the Maurer-Cartan forms, defined by
$\iota(\xi^L)\theta^L=\iota(\xi^R)\theta^R=\xi$. For sake of comparison, we
begin with a quick review of ordinary Hamiltonian $G$-spaces from the
Dirac geometry perspective.

\subsection{Hamiltonian $G$-spaces}
A Hamiltonian $G$-space is a triple $(M,\om_0,\Phi_0)$ consisting of a
$G$-manifold $M$, an invariant 2-form $\om_0$ and an equivariant
\emph{moment map} $\Phi_0\colon M\to \g^*$ such that
\begin{enumerate}
\item[(i)] $\d\om_0=0$, 
\item [(ii)] $\iota(\xi_M)\om_0=-\d\l\Phi_0,\xi\r,\ \ \ \xi\in\g$,  
\item[(iii)] $\ker(\om_0)=0$. 
\end{enumerate}
Conditions (ii) and (iii) may be rephrased
in terms of Dirac morphisms. Let $E_{\g^*}\subset \T\g^*$ be the Dirac
structure spanned by the sections 
\[ e_0(\xi)=(\xi^\sharp,\ \l\d\mu,\xi\r),\ \  \xi\in\g. \]
Here $\xi^\sharp\in\mf{X}(\g^*)$ is the vector field generating the
co-adjoint action (i.e. $\xi^\sharp|_\mu=(\ad_\xi)^*\mu$), and
$\l\d\mu,\xi\r\in \Om^1(\g^*)$ denotes the 1-form defined by $\xi$.
Then Conditions (ii), (iii) hold if and only if
\[ (\d\Phi_0,\om_0)\colon (\T M,TM)\da (\T \g^*,E_{\g^*})\]
is a strong Dirac morphism. Using the Morita isomorphism 
$\wt{\Cl}(TM)\da \A_{TM}$, and putting
$\A_{\g^*}^{\on{Spin}}:=\A_{E_{\g^*}}$
we obtain a $G$-equivariant Morita morphism 
\[ (\Phi_0,\E_0)\colon \wt{\Cl}(TM)\da \A_{\g^*}^{\on{Spin}}.\]
Since $E_{\g^*}\cap T\g^*=0$, the zero Dirac morphism
$(T\g^*,E_{\g^*})\da (0,0)$ is strong, hence it defines a Morita
trivialization $\A_{\g^*}^{\on{Spin}}\da \C$. From Proposition
\ref{prop:sympl}, we see that the resulting equivariant
$\Spin_c$-structure $\wt{\Cl}(TM)\da \C$ is 2-isomorphic to the
$\Spin_c$-structure defined by the symplectic form $\om_0$.
(Since symplectic manifolds are even-dimensional, we may
work with $\Cl(TM)$ in place of $\wt{\Cl}(TM)$.)

\subsection{q-Hamiltonian $G$-spaces}
An $\Ad(G)$-invariant inner product $B$ on $\g$ defines a closed
bi-invariant 3-form 
\[ \eta=\f{1}{12}B(\theta^L,[\theta^L,\theta^L])\in\Om^3(G).\]
A q-Hamiltonian $G$-manifold \cite{al:mom} is a $G$-manifold $M$,
together with an an invariant 2-form $\om$, and an
equivariant \emph{moment map} $\Phi\colon M\to G$ such that
\begin{enumerate}
\item[(i)]$\d\om=-\Phi^*\eta$,
\item[(ii)] $\iota(\xi_M)\om=-{\ts \hh}\Phi^* B((\theta^L+\theta^R),\xi)$
\item[(iii)]$\ker(\om)\cap \ker(\d\Phi)=0$ everywhere. 
\end{enumerate}
The simplest examples of q-Hamiltonian $G$-spaces are the conjugacy
classes in $G$, with moment map the inclusion $\Phi\colon \Co\hra G$.
Again, the definition can be re-phrased in terms of Dirac structures. Let $E_G\subset \T G$ be the Lagrangian sub-bundle 
spanned by the sections
\[ e(\xi)=\big(\xi^\sharp,\
{\ts \hh} B(\theta^L+\theta^R,\,\xi)\big),\ \ \xi\in\g.\]
Here $\xi^\sharp=\xi^L-\xi^R\in\mf{X}(G)$ is the vector field
generating the conjugation action. $E_G$ is the
\emph{Cartan-Dirac structure} introduced by Alekseev,
\v{S}evera and Strobl \cite{bur:di,sev:poi}. As shown by Bursztyn-Crainic
\cite{bur:di}, Conditions (ii) and (iii)
  above hold if and only if
\[ (\d\Phi,\om)\colon (\mathbb{T}M, TM)\da (\T G,E_G)\] 
is a strong Dirac morphism. Let 
\[ \A^{\on{Spin}}_G:=\A_{E_G}\]
be the $G$-equivariant Dixmier-Douady bundle over $G$ defined by the
Cartan-Dirac structure. The strong Dirac morphism $(\d\Phi,\om)$
determines a Morita morphism $\A_{TM}\da \A^{\on{Spin}}_G$. Since
$\A_{TM}$ is naturally Morita isomorphic to $\wt{\Cl}(TM)$ we 
obtain a 
distinguished 2-isomorphism class of $G$-equivariant Morita morphisms
\begin{equation}\label{eq:cann} 
(\Phi,\E)\colon \wt{\Cl}(TM)\da \A^{\on{Spin}}_G.
\end{equation}
\begin{definition}
The Morita morphism \eqref{eq:cann} is called the \emph{canonical
  twisted $\Spin_c$-structure} for the q-Hamiltonian $G$-space
$(M,\om,\Phi)$. 
\end{definition}
\begin{remarks}\label{rem:spinc}
\begin{enumerate}
\item  Equation 
  \eqref{eq:cann} generalizes the usual $\Spin_c$-structure for a
  symplectic manifold. Indeed, if $G=\{e\}$ we have
  $\A_G^{\on{Spin}}=\C$, and a q-Hamiltonian $G$-space is just a
  symplectic manifold. Proposition \ref{prop:sympl} shows that the
  composition $\wt{\Cl}(TM)\da \A_{TM}\da \C$ in that case is
  2-isomorphic to the Morita trivialization defined by an
  $\om$-compatible almost complex structure. 
\item The tensor product
  $\wt{\Cl}(TM)\otimes \wt{\Cl}(TM)=\wt{\Cl}(TM\oplus TM)$ is canonically Morita
  trivial (see Section \ref{subsec:clifford}). 
  Hence, the twisted $\Spin_c$-structure on a q-Hamiltonian $G$-space
  defines a $G$-equivariant Morita trivialization 
\begin{equation}\label{eq:canline}
\C\da \Phi^* (\A_G^{\on{Spin}})^{\otimes 2}.
\end{equation} 
One may think of \eqref{eq:canline} as the counterpart to the
canonical line bundle. Indeed, for $G=\{e\}$, \eqref{eq:canline} is a
Morita isomorphism from the trivial bundle over $M$ to itself.  It is
thus given by a Hermitian line bundle, and from (a) above one sees
that this is the \emph{canonical line bundle} associated to the
$\Spin_c$-structure of $(M,\om)$.
\end{enumerate}
\end{remarks}
\begin{remark}\label{rem:ofcourse}
In terms of the trivialization $TG=G\times\g$ given by the
left-invariant vector fields $\xi^L$, the Cartan-Dirac
structure $(\T G,E_G)$ is just the pull-back of the tautological Dirac
structure $(\VV_{\on{O}(\g)},E_{\on{O}(\g)})$ under the adjoint action
$\Ad\colon G\to \on{O}(\g)$. Similarly, $\A_G^{\on{Spin}}$ is simply
the pull-back of $\A_{\on{O}(\g)}\to \on{O}(\g)$ under the map
$\Ad\colon G\to \on{O}(\g)$.
\end{remark}
In many cases q-Hamiltonian $G$-spaces have even dimension, so that we
may use the usual Clifford algebra bundle $\Cl(TM)$ in \eqref{eq:cann}:
\begin{proposition}\label{cor:evenodd}
  Let $(M,\om,\Phi)$ be a connected q-Hamiltonian $G$-manifold. Then
  $\dim M$ is even if and only if $\Ad_{\Phi(m)}\in \SO(\g)$ for all
  $m\in M$. In particular, this is the case if $G$ is connected. 
\end{proposition}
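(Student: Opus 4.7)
The plan is to extract both statements from parity preservation under strong Dirac morphisms (Proposition \ref{prop:evenif}), applied pointwise to the strong Dirac morphism $(\d\Phi,\om)\colon (\T M,TM)\da (\T G,E_G)$ supplied by the q-Hamiltonian structure, together with the identification of $(\T G,E_G)$ as a pullback of the tautological Dirac structure over $\on{O}(\g)$ (Remark \ref{rem:ofcourse}).

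To execute this, I would first compute both parities fibrewise. For the source at $m\in M$, $TM|_m\cap TM|_m=TM|_m$, so the parity of $TM|_m\subset \T M|_m$ is $\dim M \bmod 2$. For the target at $g=\Phi(m)$, the left trivialization $TG\cong G\times \g$ identifies $(E_G)_g$ with $E_{\Ad_g}\subset \g\oplus \g^*$, so by Section \ref{subsec:orth1}
\[
(E_G)_g\cap T_gG \;\cong\; E_{\Ad_g}\cap \g \;=\; \ker(\Ad_g+I).
\]
Since the non-$(-1)$ eigenvalues of an orthogonal transformation contribute $+1$ to its determinant, $\det(\Ad_g)=(-1)^{\dim\ker(\Ad_g+I)}$, so the parity of $(E_G)_g$ is even precisely when $\Ad_g\in\SO(\g)$.

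Applying Proposition \ref{prop:evenif} at each $m$ then yields
\[
\dim M \;\equiv\; \dim\ker(\Ad_{\Phi(m)}+I)\pmod 2;
\]
since $M$ is connected, $\dim M$ is a single integer, and this is exactly the equivalence ``$\dim M$ even $\Leftrightarrow \Ad_{\Phi(m)}\in\SO(\g)$ for every $m$''. For the second assertion, if $G$ is connected then $\Ad\colon G\to \on{O}(\g)$ is a continuous map sending $e$ to $I\in \SO(\g)$, hence by connectedness $\Ad(G)\subset \SO(\g)$; so the right-hand condition holds at every $m$ and $\dim M$ is even.

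The only step requiring a small check is that Remark \ref{rem:ofcourse} genuinely supplies a (strong) Dirac morphism $(\T G,E_G)\da (\VV_{\on{O}(\g)},E_{\on{O}(\g)})$ covering $\Ad\colon G\to \on{O}(\g)$, so that Proposition \ref{prop:evenif} may be invoked. This is immediate from matching the defining formula $e(\xi)=(\xi^\sharp,\tfrac12 B(\theta^L+\theta^R,\xi))$ for the Cartan-Dirac structure against the parametrization $E_A=\{((I-A^{-1})v,(I+A^{-1})v/2)\}$: in the left trivialization one finds $(E_G)_g = \{((I-\Ad_{g^{-1}})\xi,\tfrac12(I+\Ad_{g^{-1}})\xi)\} = E_{\Ad_g}$, and the pullback morphism has trivial kernel since $\Ad$ is a submersion onto its image.
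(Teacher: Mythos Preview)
Your argument is correct and follows exactly the paper's approach: apply Proposition~\ref{prop:evenif} to the strong Dirac morphism $(\d\Phi,\om)$, then compute the parity of $(E_G)_{\Phi(m)}$ via the identification $(E_G)_g=E_{\Ad_g}$ from Remark~\ref{rem:ofcourse} together with $E_A\cap V\cong\ker(A+I)$ from Section~\ref{subsec:orth1}, obtaining $\det(\Ad_{\Phi(m)})=(-1)^{\dim M}$. Your final paragraph is superfluous: you do not need a second strong Dirac morphism, since under left trivialization $(E_G)_g=E_{\Ad_g}$ is a literal equality of Lagrangian subspaces of $\g\oplus\g^*$, so their intersections with $\g$---and hence their parities---agree on the nose.
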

\begin{proof} 
This is proved in \cite{al:du}, but follows much more easily from the
following Dirac-geometric argument. The parity of the Lagrangian
sub-bundle $TM\subset \T M$ is given by $(-1)^{\dim M}=\pm 1$. 
By Proposition \ref{prop:evenif}, the parity is preserved under strong
Dirac morphisms. Hence it coincides with the parity of $E_G$ over
  $\Phi(M)$, and by Remark \ref{rem:ofcourse} this is the same as the parity of the
  tautological Dirac structure $E_{\on{O}(\g)}$ over
  $\Ad(\Phi(M))\subset \on{O}(\g)$. The latter is given by
  $\det(\Ad_\Phi)=\pm 1$. This shows $\det(\Ad_\Phi)=(-1)^{\dim M}$. 
\end{proof}
As a noteworthy special case, we have: 
\begin{corollary}
A conjugacy class $\Co=\Ad(G)g\subset G$ of a compact Lie group $G$ is
even-dimensional if and only if $\det(\Ad_g)=1$.
\end{corollary}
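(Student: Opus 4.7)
The plan is to apply Proposition \ref{cor:evenodd} directly to the conjugacy class $\Co$, viewed as a q-Hamiltonian $G$-space with moment map the inclusion $\Phi\colon \Co \hookrightarrow G$. That a conjugacy class carries a canonical q-Hamiltonian structure with this moment map is one of the basic examples of the theory, so it may be invoked without further ado.

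First I would check the connectedness hypothesis. If $G$ is connected, then $\Co = G\cdot g$ is the continuous image of the connected space $G$ under $h\mapsto hgh^{-1}$, hence connected. If $G$ is disconnected, one instead applies Proposition \ref{cor:evenodd} to a single connected component of $\Co$; this is still a q-Hamiltonian $G^0$-space, and since all components of $\Co$ are diffeomorphic they share the same dimension and the same value of $\det(\Ad)$ along them. Either way the proposition applies, giving the criterion that $\dim \Co$ is even if and only if $\Ad_{\Phi(m)}\in\SO(\g)$ for every $m\in\Co$.

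Next I would translate this criterion. Since $\Phi$ is the inclusion, $\Phi(m)=m$, and any $m\in\Co$ is of the form $m=hgh^{-1}$ for some $h\in G$, whence
\[
\det(\Ad_m)=\det(\Ad_h\Ad_g\Ad_h^{-1})=\det(\Ad_g).
\]
Thus the pointwise condition $\Ad_m\in\SO(\g)$ holds for all $m\in\Co$ if and only if it holds at the single point $m=g$, i.e.\ if and only if $\det(\Ad_g)=1$. Combining with the first step yields the claim.

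There is essentially no obstacle here beyond bookkeeping: the substance is already contained in Proposition \ref{cor:evenodd}, and the only additional input is the conjugation-invariance of $\det\circ\Ad$, which makes the condition at a single representative equivalent to the condition along the entire orbit. The only small subtlety to watch is the connectedness hypothesis in Proposition \ref{cor:evenodd}, which is handled as above by passing to a component when $G$ itself is not connected.
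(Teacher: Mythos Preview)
Your argument is correct and is exactly the paper's (implicit) approach: the corollary is stated there without proof, simply as the special case of Proposition \ref{cor:evenodd} for the conjugacy class $\Co$ with its standard q-Hamiltonian structure and inclusion moment map. The only cosmetic wrinkle is that when $g\notin G^0$ a component of $\Co$ is not literally a q-Hamiltonian $G^0$-space (the moment map does not land in $G^0$), but since the proof of Proposition \ref{cor:evenodd} is pointwise and $\Co=G/G_g$ has constant dimension with $\det\circ\Ad$ constant along it, this causes no trouble.
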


\subsection{Stiefel-Whitney classes}
The existence of a $\Spin_c$-structure on a
symplectic manifold implies the vanishing of the third integral
Stiefel-Whitney class $W^3(M)=\ti{\beta}(w_2(M))$, while of course 
$w_1(M)=0$ by orientability. For q-Hamiltonian spaces we have the
following statement:  
\begin{corollary}
For any q-Hamiltonian $G$-space, 
\[ W^3(M)\equiv \ti{\beta}(w_2(M))=\Phi^* x,\ \ w_1(M)=\Phi^*y.\]
where $(x,y)\in H^3(G,\Z)\times H^1(G,\Z_2)$ is the Dixmier-Douady
class of $\A^{\on{Spin}}_G$. A similar statement holds
for the $G$-equivariant Stiefel-Whitney classes.
\end{corollary}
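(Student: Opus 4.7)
The plan is to read off both identities as a direct consequence of the existence of the canonical twisted $\Spin_c$-structure
\[ (\Phi,\E)\colon \wt{\Cl}(TM)\da \A_G^{\on{Spin}} \]
constructed in \eqref{eq:cann}, combined with the general fact recalled in Section~\ref{subsec:dd} that a Morita morphism $(\Phi,\E)\colon \A_1\da \A_2$ exists precisely when $\on{DD}(\A_1)=\Phi^*\on{DD}(\A_2)$.

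First I would identify the Dixmier-Douady class of $\wt{\Cl}(TM)$. This was recorded in Section~\ref{subsec:clifford}: for a Euclidean vector bundle $V\to M$ of any rank, one has
\[ \on{DD}(\wt{\Cl}(V))=\bigl(\ti{\beta}(w_2(V)),\,w_1(V)\bigr)\in H^3(M,\Z)\times H^1(M,\Z_2). \]
Applying this to $V=TM$ gives $\on{DD}(\wt{\Cl}(TM))=(W^3(M),w_1(M))$. Next, by definition of the Dixmier-Douady class $(x,y)=\on{DD}(\A_G^{\on{Spin}})$ and naturality of $\on{DD}$ under pull-back, one has $\on{DD}(\Phi^*\A_G^{\on{Spin}})=(\Phi^*x,\Phi^*y)$. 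The existence of the Morita isomorphism $\E\colon \wt{\Cl}(TM)\da\Phi^*\A_G^{\on{Spin}}$ underlying \eqref{eq:cann} forces these two classes to agree, yielding both
\[ W^3(M)=\Phi^*x,\qquad w_1(M)=\Phi^*y. \]

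For the equivariant statement the same argument goes through verbatim in $G$-equivariant cohomology, since the canonical twisted $\Spin_c$-structure \eqref{eq:cann} is $G$-equivariant and the $G$-equivariant Dixmier-Douady classification identifies $G$-equivariant Morita isomorphism classes with $H^3_G(M,\Z)\times H^1_G(M,\Z_2)$. I do not anticipate any real obstacle here: the entire content has already been built into the Dirac-Dixmier-Douady functor and into the computation of $\on{DD}(\wt{\Cl}(V))$; the corollary is simply a bookkeeping consequence of reading the equality $\on{DD}(\A_1)=\Phi^*\on{DD}(\A_2)$ forced by a Morita morphism $\A_1\da \A_2$.
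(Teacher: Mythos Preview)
Your argument is correct and matches the paper's intended reasoning exactly: the paper states this corollary without an explicit proof, since it follows immediately from the existence of the canonical twisted $\Spin_c$-structure \eqref{eq:cann} together with the fact (Section~\ref{subsec:clifford}) that $\on{DD}(\wt{\Cl}(TM))=(\ti{\beta}(w_2(M)),w_1(M))$ and the characterization of Morita morphisms via pull-back of Dixmier-Douady classes. You have simply written out what the paper leaves implicit.
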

\begin{remarks}
\begin{enumerate}
\item The result gives in particular a description of $w_1(\Co)$ and
      $\ti{\beta}(w_2(\Co))$ for all conjugacy classes $\Co\subset G$
      of a compact Lie group.
\item If $G$ is simply connected, so that $H^1(G,\Z_2)=0$, it follows that
      $w_1(M)=0$. Hence q-Hamiltonian spaces for simply connected
      groups are orientable. In fact, there is a canonical orientation
      \cite{al:du}. 
\item Suppose $G$ is simple and simply connected. Then $x$ is $\cox$
      times the generator of $H^3(G,\Z)=\Z$, where $\cox$ is the dual
      Coxeter number of $G$.  This follows from Remark
      \ref{rem:ofcourse}, since \[\Ad^*\colon H^3(\SO(\g),\Z)=\Z\to
      H^3(G,\Z)=\Z\] is multiplication by $\cox$. We see that a
      conjugacy class $\Co$ of $G$ admits a $\Spin_c$-structure if and
      only if the pull-back of the generator of $H^3(G,\Z)$ is
      $\cox$-torsion. Examples of conjugacy classes not admitting
      a $\Spin_c$-structure may be found in \cite{me:con}.
\end{enumerate}
\end{remarks}

\subsection{Fusion}
Let $\on{mult}\colon G\times G\to G$ be the group multiplication, and
denote by $\sig\in \Om^2(G\times G)$ the 2-form
\begin{equation}\label{eq:sig2} \sig=-\hh B(\pr_1^*\theta^L,\,\pr_2^*\theta^R)\end{equation}
where $\pr_j\colon G\times G\to G$ are the two projections. By 
\cite[Theorem
3.9]{al:pur} the pair $(\d\on{mult},\sig)$ define a strong
$G$-equivariant Dirac morphism
\[ (\d\on{mult},\sig)\colon (\T G,E_G)\times (\T G,E_G)\da (\T G,E_G).\]
This can also be seen using Remark \ref{rem:ofcourse} and Proposition
\ref{prop:mult}, since left trivialization of $TG$ intertwines
$\d\on{mult}$ with the map $\Sigma$ from \eqref{eq:semidirect}, taking \eqref{eq:sig2} to the 2-form $\sig$ on $V_{\on{O}(\g)}\times
V_{\on{O}(\g)}$. It induces a Morita morphism
\begin{equation}\label{eq:canmult}
 (\on{mult},\E)\colon \pr_1^*\A^{\on{Spin}}_G\otimes\pr_2^*\A^{\on{Spin}}_G\da
 \A^{\on{Spin}}_G. 
\end{equation}
If $(M,\om,\Phi)$ is a q-Hamiltonian $G\times G$-space, 
then $M$ with diagonal $G$-action, 2-form $\om_\fus=\om+\Phi^*\sig$, and
moment map $\Phi_\fus=\on{mult}\circ \Phi\colon M\to G$ defines a 
q-Hamiltonian $G$-space 
\begin{equation}\label{eq:fuss}
 (M,\om_\fus,\Phi_\fus).
\end{equation}
The space \eqref{eq:fuss} is called the \emph{fusion} of
$(M,\om,\Phi)$. Conditions (ii), (iii) hold since
\begin{equation}\label{eq:compsot}
(\d\Phi_\fus,\om_\fus)=(\d\on{mult},\sigma)\circ (\d\Phi,\om)
\end{equation}
is a composition of strong Dirac morphisms, while (i) follows from
$\d\sig=\on{mult}^*\eta-\pr_1^*\eta-\pr_2^*\eta$. The Dirac-Dixmier-Douady
functor (Theorem \ref{th:functor}) shows that the twisted
$\Spin_c$-structures are compatible with fusion, in the following
sense: 
\begin{proposition}
  The Morita morphism $\wt{\Cl}(TM)\da \A^{\on{Spin}}_G$ for the
  q-Hamiltonian $G$-space $(M,\om_\fus,\Phi_\fus)$ is equivariantly
  2-iso\-mor\-phic to the composition of Morita morphisms
\[ \wt{\Cl}(TM)\da \pr_1^*\A^{\on{Spin}}_G\otimes \pr_2^*\A^{\on{Spin}}_G\da
\A^{\on{Spin}}_G\]
defined by the twisted $\Spin_c$-structure for $(M,\om,\Phi)$, followed
by  \eqref{eq:canmult}. 
\end{proposition}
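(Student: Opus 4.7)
My plan is to derive the statement as a direct application of the functoriality of the Dirac-Dixmier-Douady correspondence (Theorem \ref{th:functor}) to the composition identity \eqref{eq:compsot}. The idea is to interpret each of the three Morita morphisms appearing in the claim as the image, under the Dirac-Dixmier-Douady functor, of a correspondingly explicit strong Dirac morphism; the conclusion is then a matter of invoking Theorem \ref{th:functor}(i) and verifying equivariance.

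First I would set up the dictionary. The twisted $\Spin_c$-structure \eqref{eq:cann} for the fused space $(M,\om_\fus,\Phi_\fus)$ is, by definition, the canonical Morita isomorphism $\wt{\Cl}(TM)\da \A_{TM}$ followed by the image of the strong Dirac morphism $(\d\Phi_\fus,\om_\fus)\colon (\T M,TM)\da (\T G,E_G)$. Viewing $(M,\om,\Phi)$ as a q-Hamiltonian $G\times G$-space, its twisted $\Spin_c$-structure is constructed in the same way from $(\d\Phi,\om)\colon (\T M,TM)\da (\T G,E_G)\times (\T G,E_G)$; here one uses that $\A_{E_G\times E_G}$ is canonically Morita isomorphic to $\pr_1^*\A_G^{\on{Spin}}\otimes \pr_2^*\A_G^{\on{Spin}}$, which follows from item (c) of Section \ref{subsec:family}, since the family of skew-adjoint boundary value operators associated with a product Dirac structure is the direct sum of the families on each factor. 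Finally, the Morita morphism \eqref{eq:canmult} is, by its very definition, the image under the functor of the multiplicative strong Dirac morphism $(\d\on{mult},\sigma)$ of Proposition \ref{prop:mult}. With this dictionary in place, the fusion identity \eqref{eq:compsot}, $(\d\Phi_\fus,\om_\fus)=(\d\on{mult},\sigma)\circ(\d\Phi,\om)$, combined with Theorem \ref{th:functor}(i), produces a 2-isomorphism between the two Morita morphisms appearing in the proposition (after precomposition on both sides with the canonical $\wt{\Cl}(TM)\da \A_{TM}$).

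The one genuinely nontrivial point is to check that the 2-isomorphism supplied by this argument is $G$-equivariant for the diagonal action. However, each ingredient entering the construction of Section \ref{subsec:functor} and the proof of Theorem \ref{th:functor}(i)—the Dirac morphisms themselves, the standard paths of Lagrangian subbundles interpolating between $E\oplus (V')^*$ and $V^*\oplus E'$ used in \eqref{eq:compo}, and the implementing multiplication operators of Proposition \ref{prop:hom}—is manifestly equivariant for the diagonal $G$-action, since the Cartan-Dirac structure $E_G$, the multiplicative morphism $(\d\on{mult},\sigma)$, and the moment map datum $(\d\Phi,\om)$ are all $G$-equivariant by construction. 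Consequently Theorem \ref{th:functor}(i) upgrades to an equivariant statement, and the resulting 2-isomorphism is $G$-equivariant. The main obstacle I foresee is thus minor and essentially bookkeeping: carefully confirming that every step in the functoriality argument respects the diagonal group action.
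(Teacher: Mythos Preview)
Your proposal is correct and follows exactly the paper's approach: the paper does not give a separate proof but simply states that the proposition is a consequence of applying the Dirac-Dixmier-Douady functor (Theorem \ref{th:functor}) to the composition identity \eqref{eq:compsot}. Your write-up spells out the dictionary and the equivariance check in more detail than the paper does, but the argument is the same.
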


\subsection{Exponentials}\label{sec:exp}
Let $\exp\colon\g\to G$ be the exponential map. The pull-back
$\exp^*\eta$ is equivariantly exact, and admits a canonical primitive
$\varpi\in \Om^2(\g)$ defined by the homotopy operator for the linear
retraction onto the origin. 
\begin{remark}
Explicit calculation shows \cite{al:no}
that $\varpi$ is the pull-back of the 2-form (denoted by the same
letter) $\varpi\in \Gamma(\wedge^2 V_{\mf{o}(\g)}^*)\cong
C^\infty(\mf{o}(\g),\wedge^2\g^*)$ from Section \ref{subsec:exp}
under the adjoint map, $\ad\colon \g\to \mf{o}(\g)$.
Using the inner product to identify $\g^*\cong\g$, the Dirac structure
$E_{\g^*}\equiv E_\g$ is the pull-back of the Dirac structure $E_{\mf{o}(\g)}$ by
the map $\ad\colon \g\to \mf{o}(\g)$. 
\end{remark}
The differential of the exponential map together with the 2-form $\varpi$ define a 
Dirac morphism 
\[ (\d\exp,-\varpi)\colon (\T \g,\,E_\g)\da (\T G,\,E_G) \]
which is a strong Dirac morphism over the open subset $\g_\natural$
where $\exp$ has maximal rank. See \cite[Proposition 3.12]{al:pur},
or Proposition \ref{prop:exp} above.

Let $(M,\Phi_0,\om_0)$ be a Hamiltonian $G$-space with
$\Phi_0(M)\subset \g_\natural$, and $\Phi=\exp\Phi_0,\ 
\om=\om_0-\Phi_0^*\varpi$. Then $(\d\Phi,\om)=(\d\exp,-\varpi)\circ
(\d\Phi_0,\om_0)$ is a strong Dirac morphism, hence $(M,\om,\Phi)$ is
a q-Hamiltonian $G$-space. It is called the \emph{exponential} of the
Hamiltonian $G$-space $(M,\om_0,\Phi_0)$.

The canonical twisted $\Spin_c$-structure for $(M,\om,\Phi)$ can be
composed with the Morita trivialization
$\Phi^*\A^{\on{Spin}}_G=\Phi_0^*\exp^*\A_G^{\on{Spin}}\da \C$ defined
by the Morita trivialization of $\exp^*\A_G^{\on{Spin}}$, to produce
an ordinary equivariant $\Spin_c$-structure. On the other hand, we
have the equivariant $\Spin_c$-structure defined by the symplectic
form $\om_0$.
\begin{proposition} Suppose $(M,\om_0,\Phi_0)$ is a Hamiltonian
  $G$-space, such that $\Phi_0$ takes values in the zero component 
  of $\g_\natural\subset\g$. Let $(M,\om,\Phi)$ be its exponential. Then the
  composition\footnote{We could also write $\Cl(TM)$ in place of
    $\wt{\Cl}(TM)$ since $\dim M$ is even.}
\[ \wt{\Cl}(TM)\da \Phi^*\A^{\on{Spin}}_G\da \C\]
is 2-isomorphic to the Morita morphism $\wt{\Cl}(TM)\da \C$ given by the 
canonical $\Spin_c$-structure for $\om_0$. 
\end{proposition}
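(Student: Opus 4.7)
The plan is to combine Theorem~\ref{th:functor} with Proposition~\ref{prop:exp2} to reduce the identity to a cancellation of a Morita isomorphism against its opposite.

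By the discussion in Section~\ref{sec:exp}, the exponential factorization $(\d\Phi,\om)=(\d\exp,-\varpi)\circ(\d\Phi_0,\om_0)$ holds as a composition of strong Dirac morphisms, since $\Phi_0(M)\subset\g_\natural$. Applying Theorem~\ref{th:functor}(i) (together with Proposition~\ref{prop:cdo} to identify $\wt{\Cl}(TM)\cong\A_{TM}$), the canonical twisted $\Spin_c$ morphism $\wt{\Cl}(TM)\da\Phi^*\A_G^{\on{Spin}}$ is $G$-equivariantly 2-isomorphic to the composition
\[
\wt{\Cl}(TM)\stackrel{\E_0}{\da}\Phi_0^*\A_\g^{\on{Spin}}\stackrel{\Phi_0^*\E_{\exp}}{\da}\Phi^*\A_G^{\on{Spin}},
\]
where $\E_0$ and $\E_{\exp}$ are the Morita morphisms associated to the two Dirac factors.

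Next I would identify the Morita trivialization of $\exp^*\A_G^{\on{Spin}}$ used in the statement. Since $E_\g\cap\g=0$, the zero Dirac morphism $(\T\g,E_\g)\da(0,0)$ is strong and produces a canonical $G$-equivariant trivialization $\tau\colon\C\da\A_\g^{\on{Spin}}$; composing with $\E_{\exp}$ yields one natural trivialization of $\exp^*\A_G^{\on{Spin}}$, while the linear-retraction homotopy $A_s=\exp(sa)$ from $I$ to $\exp(a)$ yields another. Pulling Proposition~\ref{prop:exp2} back along $\ad\colon\g\to\mf{o}(\g)$ (using the remark at the start of Section~\ref{sec:exp}) shows that these two trivializations are $G$-equivariantly 2-isomorphic on the zero component of $\g_\natural$, which by hypothesis contains $\Phi_0(M)$. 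It therefore suffices to work with the Dirac-theoretic trivialization, pulled back via $\Phi_0$ to $(\Phi_0^*\E_{\exp})^{\on{op}}\circ\Phi_0^*\tau^{\on{op}}$.

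With this choice, the composite $\wt{\Cl}(TM)\da\Phi^*\A_G^{\on{Spin}}\da\C$ of interest reads
\[
\wt{\Cl}(TM)\stackrel{\E_0}{\da}\Phi_0^*\A_\g^{\on{Spin}}\stackrel{\Phi_0^*\E_{\exp}}{\da}\Phi^*\A_G^{\on{Spin}}\stackrel{(\Phi_0^*\E_{\exp})^{\on{op}}}{\da}\Phi_0^*\A_\g^{\on{Spin}}\stackrel{\Phi_0^*\tau^{\on{op}}}{\da}\C.
\]
The canonical bimodule isomorphism $\E_{\exp}^{\on{op}}\circ\E_{\exp}\cong\on{id}$ cancels the two middle arrows up to 2-isomorphism. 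A second application of Theorem~\ref{th:functor}(i) then identifies what remains with the Morita morphism of the composed strong Dirac morphism $(0,\om_0)=(0,0)\circ(\d\Phi_0,\om_0)\colon(\T M,TM)\da(0,0)$, and Proposition~\ref{prop:sympl} applied to the symplectic vector bundle $(TM,\om_0)$ recognizes this in turn as the canonical $\Spin_c$-structure for $\om_0$. The main obstacle is the identification step above: one must transfer Proposition~\ref{prop:exp2} from $\mf{o}(\g)$ to $\g$ via $\ad$ and verify that the Morita trivialization of $\exp^*\A_G^{\on{Spin}}$ implicit in the statement matches the Dirac-theoretic composite on the zero component; the remaining ingredients are straightforward applications of functoriality (Theorem~\ref{th:functor}) and opposite-bimodule cancellation.
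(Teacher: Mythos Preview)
Your proof is correct and follows essentially the same route as the paper's: both use Proposition~\ref{prop:exp2} (transferred from $\mf{o}(\g)$ to $\g$ via $\ad$) to replace the homotopy trivialization of $\exp^*\A_G^{\on{Spin}}$ by the Dirac-theoretic one, then invoke the functoriality of Theorem~\ref{th:functor} and Proposition~\ref{prop:sympl}. The paper's version is considerably terser---it compresses your explicit $\E_{\exp}^{\on{op}}\circ\E_{\exp}$ cancellation and the final identification into the single phrase ``the result now follows from Theorem~\ref{th:functor}''---but the content is the same; you have simply unpacked the steps more carefully.
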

\begin{proof}
  Proposition \ref{prop:exp2} shows that over the zero component of 
  $\g_{\natural}$, the Morita trivialization of $\exp^*\A_G^{\on{Spin}}$
  is 2-isomorphic to the composition of the Morita isomorphism
  $\A_\g^{\on{Spin}}\da \A_G^{\on{Spin}}$ induced by
  $(\d\exp,-\varpi)$, with the Morita trivialization of
  $\A_\g^{\on{Spin}}$ (induced by the Dirac morphism $(T\g^*,E_\g)\da
  (0,0)$). The result now follows from Theorem \ref{th:functor}.
\end{proof}

\subsection{Reduction}
In this Section, we will show that the canonical
twisted $\Spin_c$-structure is well-behaved under 
reduction.
Let $(M,\om,\Phi)$ be a q-Hamiltonian $K\times G$-space. 
Thus $\Phi$ has two components $\Phi_K,\Phi_G$, taking values in
$K,G$ respectively.  Suppose 
$e\in G$ a regular value of $\Phi_G$, so that $Z=\Phi_G^{-1}(e)$ is
a smooth $K\times G$-invariant submanifold. Let $\iota\colon Z\to M$
be the inclusion.  The moment map condition 
shows that the $G$-action is locally free on $Z$, and that
$\iota^*\om$ is $G$-basic. Let us assume for simplicity that the
$G$-action on $Z$ is actually free. Then 
\[ M_{\on{red}}=Z/G\]
is a smooth $K$-manifold, the $G$-basic 2-form $\iota^*\om$ descends 
to a 2-form $\om_{\on{red}}$ on $M_{\on{red}}$, and the restriction
$\Phi|_Z$ descends to a smooth $K$-equivariant map
$\Phi_{\on{red}}\colon M_{\on{red}}\to K$. 
\begin{proposition}\cite{al:mom} The triple
  $(M_{\on{red}},\om_{\on{red}},\Phi_{\on{red}})$ is a q-Hamiltonian
  $K$-space. In particular, if $K=\{e\}$ it is a symplectic manifold.
\end{proposition}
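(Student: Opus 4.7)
\medskip

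\textbf{Proof plan.} The plan is to verify the three q-Hamiltonian axioms (i)--(iii) directly, pulling back the structure on $M$ along $\iota\colon Z\hra M$ and pushing down along the principal $G$-bundle $p\colon Z\to M_\red$. By construction, $p^*\om_\red=\iota^*\om$ and $\Phi_\red\circ p=\Phi_K\circ \iota$, so one recovers statements about $\om_\red,\Phi_\red$ by cancelling the submersion $p^*$ after establishing $G$-basic identities on $Z$.

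Axioms (i) and (ii) are essentially formal. For (i), $\iota^*\d\om=-\iota^*\Phi^*(\eta_K+\eta_G)$, and the second summand vanishes because $\Phi_G\circ\iota$ is constant at $e$; the remaining term is $-p^*\Phi_\red^*\eta_K$. For (ii), take $\xi\in\k$: the vector field $\xi_M$ is tangent to $Z$ (since $K$ commutes with $G$ and preserves $Z$) and descends to $\xi_{M_\red}$, and one restricts the $K$-component of the moment map equation to $Z$, which only involves $\Phi_K$.

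The substantive point is (iii). The key identity is the equality
\[ T_zZ=(\g_M|_z)^{\perp_\om}\qquad (z\in Z).\]
Indeed, since $\Phi_G(z)=e$ and $\theta^L|_e=\theta^R|_e=\on{id}_\g$, the moment map condition for $\xi\in\g$ reads $\om(\xi_M,w)=-B_G(\d\Phi_G(w),\xi)$ for $w\in T_zM$, so $w\perp_\om\g_M|_z$ if and only if $w\in\ker(\d\Phi_G)=T_zZ$. Now suppose $v\in T_xM_\red$ satisfies $\iota_v\om_\red=0$ and $\d\Phi_\red(v)=0$, and lift to $\tilde v\in T_zZ$. The first condition means $\tilde v\in T_zZ^{\perp_\om}$, and since for any 2-form $(U^{\perp_\om})^{\perp_\om}=U+\ker\om|_z$, the above identity gives $\tilde v\in \g_M|_z+\ker\om|_z$. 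Write $\tilde v=\xi_M+w$ with $\xi\in\g$ and $w\in\ker\om|_z$. By equivariance, $\d\Phi(\xi_M)$ equals the conjugation-action vector field $\xi^\sharp$ on $K\times G$ at $(\Phi_K(z),e)$, whose $G$-component vanishes at $e$ and whose $K$-component is zero since $\xi\in\g$ acts trivially on $K$; hence $\d\Phi(\xi_M)=0$. Combined with $\tilde v\in T_zZ=\ker\d\Phi_G$ and $\d\Phi_K(\tilde v)=0$, this forces $\d\Phi(w)=0$, and condition (iii) on $M$ then gives $w=0$. So $\tilde v=\xi_M\in\g_M|_z$ and $v=[\tilde v]=0$, as required.

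The main obstacle is (iii); once the identity $T_zZ=(\g_M|_z)^{\perp_\om}$ is in hand it falls out, but locating the degeneracy of $\om|_z$ inside $\g_M|_z$ does require the full strength of the non-degeneracy axiom on $M$.
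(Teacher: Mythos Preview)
Your argument is correct. Note, however, that the paper does not give its own proof of this proposition: it simply cites the result from \cite{al:mom}, so there is nothing here to compare against directly. Your verification is essentially the standard one from that reference --- axioms (i) and (ii) are formal, and the real content is (iii), which you handle via the identity $T_zZ=(\g_M|_z)^{\perp_\om}$ (a consequence of the moment map condition at $\Phi_G=e$) together with the double-orthogonal formula $(U^{\perp_\om})^{\perp_\om}=U+\ker\om$. One small point worth making explicit: the decomposition $\tilde v=\xi_M+w$ is in fact unique, since $\g_M|_z\cap\ker\om|_z=0$ (regularity of $e$ makes $\d\Phi_G|_z$ surjective, so $\om(\xi_M,\cdot)=-B_\g(\d\Phi_G(\cdot),\xi)$ cannot vanish identically unless $\xi=0$); but your argument does not actually need uniqueness, only existence.
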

We wish to relate the canonical twisted $\Spin_c$-structures for 
$M_\red$ to that for $M$.  We need: 
\begin{lemma}
  There is a $G\times K$-equivariant Morita morphism 
\begin{equation}\label{eq:clifred} 
\wt{\Cl}(TM)|_Z\da  \wt{\Cl}(TM_\red),
\end{equation}
with underlying map the quotient map $\pi\colon Z\to M_\red$.
\end{lemma}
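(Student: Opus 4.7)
My plan is to construct the Morita morphism by decomposing $TM|_Z$ as a Whitney sum $\pi^*TM_\red \oplus F$ with $F$ a trivialized Euclidean bundle carrying a canonical $G\times K$-equivariant complex structure, and then to combine the resulting Morita isomorphism $\wt{\Cl}(TM)|_Z \cong \pi^*\wt{\Cl}(TM_\red) \otimes \wt{\Cl}(F)$ with the Morita trivialization of $\wt{\Cl}(F)$ defined by this complex structure.

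To set up the splitting, I fix a $G\times K$-invariant Riemannian metric on $M$ (available by averaging, both groups being compact). Because $e$ is a regular value of $\Phi_G$, the differential $\d\Phi_G$ identifies the normal bundle $\nu_Z$ of $Z\hra M$ with the trivial bundle $Z\times\g$ equivariantly, where $G$ acts by $\Ad$ and $K$ acts trivially. The vertical subbundle $\g_M|_Z \subset TZ$ of directions tangent to $G$-orbits is similarly trivialized by the action map $\xi\mapsto\xi_M$. The metric gives a $G\times K$-invariant principal connection on the $G$-bundle $\pi\colon Z\to M_\red$, and its horizontal complement $H\subset TZ$ descends via $\d\pi$ to an isomorphism $H \cong \pi^*TM_\red$ of $K$-equivariant Euclidean bundles (the $G$-action on the target is trivial since $\pi$ is $G$-invariant). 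Thus
\[ TM|_Z \;\cong\; \pi^*TM_\red \,\oplus\, F, \qquad F := Z\times(\g\oplus\g), \]
as $G\times K$-equivariant Euclidean bundles over $Z$.

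For the Clifford side, this decomposition induces a canonical Morita isomorphism $\wt{\Cl}(TM)|_Z \cong \pi^*\wt{\Cl}(TM_\red) \otimes \wt{\Cl}(F)$, compare item (c) of the list following Proposition \ref{prop:dd}. The bundle $\g\oplus\g$ carries the $\Ad$-invariant complex structure $J(\xi,\eta)=(-\eta,\xi)$, and choosing any complex structure on the auxiliary $\R^{2\dim\g}$ appearing in the definition of $\wt{\Cl}$ produces a $G\times K$-equivariant Morita trivialization $\wt{\Cl}(F)\da \C$ via the associated spinor module (the required equivariance is only at the level of the projective unitary group, where the $G$-action factors through $\Spin_c$-type projective representations). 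Tensoring with the identity on $\pi^*\wt{\Cl}(TM_\red)$ yields the desired Morita morphism with underlying map $\pi$. The main technical obstacle is verifying that the construction is independent of the various choices (metric, principal connection, auxiliary complex structure) up to 2-isomorphism; this follows from contractibility of each space of choices together with the homotopy-invariance principle of Example \ref{ex:hom3}.
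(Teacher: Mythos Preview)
Your argument is essentially the same as the paper's: both split $TM|_Z$ as $\pi^*TM_\red$ plus a trivial summand with fiber $\g\oplus\g$ (the paper writes it as $\g\oplus\g^*$ to keep the vertical and normal copies straight), and both Morita-trivialize the Clifford algebra of that summand via an invariant $\Spin_c$-structure. The paper uses the canonical symplectic form $\om_{\g\oplus\g^*}$ (which matters later in Lemma~\ref{lem:decomp} and Theorem~\ref{th:reduction}), while you use the complex structure $J(\xi,\eta)=(-\eta,\xi)$; up to the identification $\g\cong\g^*$ via $B$ these give the same $\Spin_c$-structure.

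One remark: your hedge that ``the required equivariance is only at the level of the projective unitary group'' is unnecessary and potentially misleading. Since $J$ is $\Ad$-invariant, $G$ acts \emph{complex-linearly} on $(\g\oplus\g,J)$, hence honestly (not just projectively) on the spinor module $\wedge(\g\oplus\g)_+$. So the Morita trivialization of $\wt{\Cl}(F)$ is genuinely $G\times K$-equivariant, as the lemma requires.
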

\begin{proof}
Consider the exact sequences of vector bundles over $Z$, 
\begin{equation}\label{eq:exact1}
 0\to Z\times\g\to TZ\to \pi^*TM_\red\to 0,\end{equation}
where the first map is inclusion of the generating vector fields, and 
\begin{equation}\label{eq:exact2}
 0\to TZ\to TM|_Z \to Z\times \g^*\to 0,
\end{equation}
where the map $TM|_Z\to \g^*\cong \g =T_eG$ is the restriction
$(\d\Phi)|_Z$. (We are writing $\g^*$ in \eqref{eq:exact2} to avoid
confusion with the copy of $\g$ in \eqref{eq:exact1}.) The Euclidean
metric on $TM$ gives orthogonal splittings of both exact sequences,
hence it gives a $K\times G$-equivariant direct sum decomposition
\begin{equation}\label{eq:splitting} TM|_Z=\pi^*TM_\red\oplus
Z\times(\g\oplus\g^*).
\end{equation}
The standard symplectic structure 
\begin{equation}\label{eq:omgg} 
\om_{\g\oplus \g^*}((v_1,\mu_1),(v_2,\mu_2))=\mu_1(v_2)-\mu_2(v_1)\end{equation}

defines a $K\times G$-equivariant $\Spin_c$-structure on $Z\times
(\g\oplus\g^*)$, and gives the desired equivariant Morita isomorphism.
\end{proof}
Note that the restriction of the 
Morita morphism $\wt{\Cl}(TM)\da \A_{K\times G}^{\on{Spin}}$ 
to $Z\subset M$ takes values in $\A_{K\times
  G}^{\on{Spin}}|_{K\times\{e\}}$. Let 
\begin{equation}\label{eq:restrict}
 \A_{K\times  G}^{\on{Spin}}|_{K\times\{e\}}\da \A_K^{\on{Spin}}\end{equation}
be the Morita isomorphism defined by the Morita trivialization of 
$\A_G^{\on{Spin}}|_{\{e\}}$. The twisted $\Spin_c$-structure for
$(M,\om,\Phi)$ descends to the twisted $\Spin_c$-structure for 
the $G$-reduced space $(M_\red,\om_\red,\Phi_\red)$, in the following 
sense.

\begin{theorem}[Reduction]\label{th:reduction}
Suppose $(M,\om,\Phi)$ is a q-Hamiltonian $K\times G$-manifold, such
that $e$ is a regular value of $\Phi_G$ and such that $G$ acts freely
on $\Phi_G^{-1}(e)$.
The diagram of $K\times G$-equivariant Morita morphisms
\[ \xymatrix{
 {\wt{\Cl}(TM)|_Z}\ar@{-->}[r]\ar @{-->}[d]  & 
**[r]{\A_{K\times G}^{\on{Spin}}|_{K\times\{e\}}}\ar@{-->}[d]\\
{\wt{\Cl}(TM_\red)}\ar@{-->}[r] & {\A_K^{\on{Spin}}}
}\]
%
%\[\begin{CD}
%\wt{\Cl}(TM)|_Z @>>>  \A_{K\times G}^{\on{Spin}}|_{K\times\{e\}}\\
%@VVV @VVV\\
%\wt{\Cl}(TM_\red) @>>> \A_K^{\on{Spin}}
%\end{CD}\]
commutes up to equivariant 2-isomorphism. Here the vertical maps are
given by \eqref{eq:clifred} and \eqref{eq:restrict}.
\end{theorem}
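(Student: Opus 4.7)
The strategy is to apply the functoriality of the Dirac-Dixmier-Douady functor (Theorem \ref{th:functor}) in order to reduce the diagram to an identity of Morita morphisms arising from an identity of compositions of strong Dirac morphisms, using the splitting \eqref{eq:splitting} to factor both paths through a common intermediate.

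First, I would pull everything back to $Z$ and invoke the splitting \eqref{eq:splitting}, which gives an orthogonal direct sum decomposition of the Dirac structure
\[
(\T M|_Z, TM|_Z)\;\cong\;\pi^*(\T M_\red, TM_\red)\oplus (\T(\g\oplus\g^*),\g\oplus\g^*).
\]
By item (c) of Subsection \ref{subsec:family} combined with Proposition \ref{prop:cdo}, this identifies the pulled-back Dixmier-Douady bundle as $\wt\Cl(TM)|_Z\cong\wt\Cl(\pi^*TM_\red)\otimes\wt\Cl(\g\oplus\g^*)$, and by construction the Morita morphism \eqref{eq:clifred} is just the tensor product of the identity on the first factor with the symplectic $\Spin_c$-trivialization $\wt\Cl(\g\oplus\g^*)\da\C$ associated to $\om_{\g\oplus\g^*}$.

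Second, I would analyze the restriction of the moment map Dirac morphism $(d\Phi,\om)$ to $Z$. Since $\Phi_G\equiv e$ on $Z$, the Maurer-Cartan forms on $G$ satisfy $\theta_G^L=\theta_G^R$ there, and the moment map condition $\iota(\xi_M)\om=-\hh\Phi^*B(\theta^L+\theta^R,\xi)$ gives, for $\xi\in\g$ paired with a normal vector $v$ in the $\g^*$-summand of the splitting (identified with $\g$ via $d\Phi_G$), the pairing $\om(\xi,v)=-B(\xi,v)$. Under the identification $\g^*\cong\g$ this coincides, up to sign, with the standard symplectic form $\om_{\g\oplus\g^*}$ of \eqref{eq:omgg}. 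Together with $\iota^*\om=\pi^*\om_\red$ on $TZ$, this shows that $(d\Phi|_Z,\om|_Z)$ agrees, on the diagonal blocks of the splitting, with the direct sum of the pulled-back reduced moment map Dirac morphism $(d\Phi_\red,\om_\red)\colon(\T M_\red,TM_\red)\da(\T K,E_K)$ and a Dirac morphism $(\T(\g\oplus\g^*),\g\oplus\g^*)\da(\g\oplus\g^*,\g^*)$ carrying the 2-form $\om_{\g\oplus\g^*}$. Any remaining off-diagonal cross terms can be killed by linear interpolation; since non-degeneracy and the strong Dirac morphism property are preserved, the resulting Morita morphism changes only within its 2-isomorphism class (Example \ref{eq:hom2}).

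Third, I would assemble the pieces. By Theorem \ref{th:functor}, both paths around the diagram reduce to the tensor product of the Morita morphism induced by $(d\Phi_\red,\om_\red)$ with a Morita trivialization $\wt\Cl(\g\oplus\g^*)\da\C$. For the bottom path, this trivialization is the symplectic one from \eqref{eq:clifred}. For the top path, it is the composition of the Morita morphism coming from the Dirac morphism $(\T(\g\oplus\g^*),\g\oplus\g^*)\da(\g\oplus\g^*,\g^*)$ with the canonical trivialization $\A_{\g^*}\da\C$ of Proposition \ref{prop:cdo}; by a further application of Theorem \ref{th:functor} this equals the Morita morphism attached to the composed Dirac morphism $(0,\om_{\g\oplus\g^*})\colon(\T(\g\oplus\g^*),\g\oplus\g^*)\da(0,0)$. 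The two trivializations are then 2-isomorphic by Proposition \ref{prop:sympl}. The argument is manifestly $K\times G$-equivariant, as the splitting and all Dirac morphisms respect the group action. The main obstacle will be the second step, specifically the verification that the moment map condition pins down the 2-form $\om|_Z$ on the $\g\oplus\g^*$-summand as the standard symplectic form, and the justification that off-diagonal and $\g^*\otimes\g^*$ self-pairing contributions do not obstruct the argument beyond a 2-isomorphism; everything else is functorial bookkeeping.
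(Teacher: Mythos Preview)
Your strategy is essentially the paper's: factor both sides through the splitting \eqref{eq:splitting}, reduce to an identity of Dirac morphisms, and invoke Theorem~\ref{th:functor} together with Proposition~\ref{prop:sympl}. The paper organizes this as two commuting squares, one relating $\wt\Cl(TM)|_Z$ to $\A_{TM}|_Z$ and $\A_{TM_\red}$, the other expressing the identity $(\pr_1,0)\circ(\d\Phi|_Z,\om|_Z)=(\d\Phi_\red,\om_\red)\circ(\Theta,\om_{\g\oplus\g^*})$ of Dirac morphisms; your tensor-product reformulation is equivalent.

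The one substantive divergence is at exactly the point you flag as the main obstacle. Rather than keep an arbitrary Euclidean splitting and then homotope away off-diagonal terms of $\om|_Z$, the paper proves a normal form lemma (Lemma~\ref{lem:decomp}): one can \emph{choose} the invariant metric on $TM$ so that the decomposition \eqref{eq:splitting} is already $\om$-orthogonal, i.e.\ $\om|_Z=\pi^*\om_\red\oplus\om_{\g\oplus\g^*}$ on the nose. The construction replaces an arbitrary complement $F$ to $TZ$ by an $\om$-isotropic one $F'$, then takes $\pi^*TM_\red$ to be the $\om$-orthogonal to $\g\oplus\g^*$. This buys a clean equality of Dirac morphisms with no interpolation needed. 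Your alternative---linearly interpolating the 2-form to its block-diagonal part---would require checking that the strong Dirac morphism condition $\ker(\om_s)\cap\ker(\d\Phi|_Z)=0$ persists along the path, which is not automatic; the normal-form route sidesteps this entirely.
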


The proof uses the following normal form result for $TM|_Z$. 
\begin{lemma}\label{lem:decomp}
  For a suitable choice of invariant Euclidean metric on $TM$, the
  decomposition $TM|_Z=\pi^*TM_\red\oplus
  Z\times(\g\oplus\g^*)$ from \eqref{eq:splitting} is compatible with
  the 2-forms. That is, 
\[ \om|_Z=\pi^*\om_\red\oplus \om_{\g\oplus\g^*}.\]
\end{lemma}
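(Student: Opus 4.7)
The plan is to construct the Euclidean metric on $TM|_Z$ in a $(G\times K)$-equivariant way using the q-Hamiltonian moment map condition, and then extend it invariantly to all of $M$. The key observation is that at $z\in Z$ we have $\Phi_G(z)=e$, where $\theta^L_{\Phi(z)}=\theta^R_{\Phi(z)}=\on{id}_\g$, so condition (ii) reduces to
\[
\om(\xi_M(z),v) \;=\; -B(\d\Phi_G(v),\xi), \qquad v\in T_zM,\ \xi\in\g.
\]
From this: the generating subbundle $V:=\g\cdot Z$ is $\om$-isotropic; its $\om$-annihilator inside $TM|_Z$ is $TZ$; and for any complement $W$ of $TZ$ in $TM|_Z$ the map $\d\Phi_G|_W\colon W\to \g$ is an isomorphism, which combined with $B$ makes the $\om$-pairing between $V$ and $W$ non-degenerate.

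First I would construct a $(G\times K)$-invariant \emph{isotropic} complement $W\subset TM|_Z$ of $TZ$. Equivariant averaging yields any invariant complement $W^0$, which I would modify by a smooth equivariant bundle map $\phi\colon W^0\to V$ to form $W=\{w+\phi(w):w\in W^0\}$. The isotropy condition for $W$ reduces fibrewise, via the identifications $W^0\cong\g^*$ (through $\d\Phi_G$) and $\g^*\cong\g$ (through $B$), to a linear equation of the form $\Omega=\tilde\phi^*-\tilde\phi$ for the skew operator $\Omega$ representing $\om|_{W^0}$; the solution $\tilde\phi=-\tfrac12\Omega$ is smooth and equivariant.

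Next I would set $U := TZ \cap W^\om$. Non-degeneracy of the $V$--$W$ pairing makes the map $TZ\to W^*,\ u\mapsto\om(u,\cdot)|_W$ surjective with $V\xrightarrow{\sim}W^*$, so $U$ is an invariant complement of $V$ in $TZ$ of rank $\dim M_\red$, and $\om(U,W)=0$ by construction. Collecting $\om(V,V)=\om(W,W)=0$ (isotropy), $\om(U,V)=0$ (from $U\subset V^\om$), and $\om(U,W)=0$, the form $\om|_Z$ becomes block-diagonal in $TM|_Z=U\oplus V\oplus W$. The $U$-block agrees with $\pi^*\om_\red$ because $\pi_*\colon U\to \pi^*TM_\red$ is an isomorphism and $\iota^*\om=\pi^*\om_\red$. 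Under the identifications $\xi\mapsto\xi_M(z)$, $\mu\mapsto(\d\Phi_G|_W)^{-1}(\mu^\sharp)$ (with $\mu^\sharp\in\g$ the $B$-dual of $\mu\in\g^*$), the $(V\oplus W)$-block matches $\om_{\g\oplus\g^*}$ from \eqref{eq:omgg}, since $\om(\xi_M,w)=-B(\d\Phi_G(w),\xi)=-\mu(\xi)$.

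Finally I would take any $(G\times K)$-invariant Euclidean metric on $TM|_Z$ making $U$, $V$, $W$ mutually orthogonal, and extend it invariantly to $TM$ via a tubular neighborhood of $Z$ and a partition of unity. The orthogonal splittings of \eqref{eq:exact1} and \eqref{eq:exact2} for this metric are then precisely $TZ=V\oplus U$ and $TM|_Z=TZ\oplus W$, and the desired identity $\om|_Z = \pi^*\om_\red\oplus\om_{\g\oplus\g^*}$ follows. The main obstacle is the construction of the isotropic complement $W$, which combines equivariant averaging with a fibrewise linear-algebraic isotropification; everything else is bookkeeping driven by the moment map identity, which cleanly reduces the problem at $\Phi_G=e$ to the standard model on $\g\oplus\g^*$.
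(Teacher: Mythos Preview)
Your proof is correct and follows essentially the same approach as the paper's: both construct an $\omega$-isotropic invariant complement $W$ to $TZ$ by modifying an arbitrary invariant complement via a map into $V=Z\times\g$ (the paper phrases this as $v\mapsto v-\tfrac12\phi(v)$ for a projection $\phi$ onto $V$ along $F^\omega$, while you solve the linear equation $\Omega=\tilde\phi^*-\tilde\phi$), and then identify $\pi^*TM_\red$ with $(V\oplus W)^\omega=TZ\cap W^\omega$. The two isotropification steps are equivalent, and the rest of the argument is identical.
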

\begin{proof}
  We will construct $K\times G$-equivariant splittings of the exact
  sequences \eqref{eq:exact1} and \eqref{eq:exact2} so that
  \eqref{eq:splitting} is compatible with the 2-forms. (One may then
  take an invariant Euclidean metric on $TM|_Z$ for which these
  splittings are orthogonal, and extend to $TM$.) Begin with an
  arbitrary $K\times G$-invariant splitting
\[ TM|_Z=TZ\oplus F.\]
Since $F\cap \ker(\om)=0$, the sub-bundle $F^\om\subset TM|_Z$ (the
set of vectors $\om$-orthogonal to all vectors in $F$) has codimension
$\on{codim}(F^\om)=\dim F=\dim\g$.  The moment map condition shows
that $\om$ is non-degenerate on $F\oplus Z\times\g$. Hence
$(Z\times\g)\cap F^\om=0$, and therefore 
\[ TM|_Z=(Z\times\g)\oplus F^\om.\]
Let $\phi\colon TM|_Z\to Z\times \g$ be the projection along $F^\om$. The subspace
\[ F'=\{v-{\ts\hh} \phi(v)|\ v\in F\}\]
is again an invariant complement to $TZ$ in $TM|_Z$, and it is
\emph{isotropic} for $\om$. Indeed, if $v_1,v_2\in F$, 
\[ \om(v_1-{\ts\hh} \phi(v_1),\,v_2-{\ts\hh} \phi(v_2))
={\ts{\hh}}\om(v_1,v_2-\phi(v_2))+{\ts \hh}\om(v_1-\phi(v_1),v_2)\] 
vanishes since $v_i-\phi(v_i)\in F^\om$. 
The restriction of $(\d\Phi_G)|_Z\colon TM|_Z\to \g^*$ to $F'$
identifies $F'=Z\times\g^*$.  We have
hence shown the existence of an invariant decomposition
$TM|_Z=TZ\oplus Z\times\g^*$ where the second summand is embedded as an
$\om$-isotropic subspace, and such that $(\d\Phi_G)|_Z$ is projection to
the second summand. From the $G$-moment map condition
\[\iota(\xi_M)\om|_Z=-{\ts \hh}\Phi_G^*B((\theta^L+\theta^R)|_Z,
\xi)=-B((\d\Phi_G)|_Z,\xi),\ \ \xi\in\g,\]
we see that the induced 2-form on the sub-bundle $Z\times(\g\oplus\g^*)$ is
just the standard one, $\om_{\g\oplus\g^*}$.  The $\om$-orthogonal space
$Z\times(\g\oplus\g^*)^\om$ defines a complement to $Z\times\g\subset
TZ$, and is hence identified with $\pi^*TM_\red$.
\end{proof}

\begin{proof}[Proof of Theorem \ref{th:reduction}]
Let $\Theta\colon TM|_Z\da TM_\red$ be the bundle morphism given by
projection to the first summand in  \eqref{eq:splitting}, followed by
the quotient map. Then 
\[ (\Theta,\om_{\g\oplus\g^*})\colon (\T M|_Z,  TM|_Z)\da (\T M_\red, TM_\red),\]
is a strong Dirac morphism, and the resulting Morita morphism 
$\A_{TM}|_Z\da \A_{TM_\red}$ fits into a commutative diagram 
\begin{equation}\label{eq:dia1} \xymatrix{
{\wt{\Cl}(TM)|_Z} 
\ar@{-->}[r]\ar @{-->}[d]  
& {\A_{TM}|_Z} \ar @{-->}[d] \\
{\wt{\Cl}(TM_\red)}  \ar@{-->}[r] & {\A_{TM_\red}}
}\end{equation}
%
%\begin{equation}\label{eq:dia1} \begin{CD}
%\wt{\Cl}(TM)|_Z @>>> \A_{TM}|_Z\\
%@VVV @VVV\\
%\wt{\Cl}(TM_\red) @>>> \A_{TM_\red}
%\end{CD}\end{equation}
%
On the other hand, letting $\pr_1\colon T(K\times G)|_{K\times\{e\}}\to TK$ be projection
to the first summand, we have 
\[ (\pr_1,0)\circ (\d\Phi|_Z,\om|_Z)=(\d\Phi_\red,\om_\red)\circ
(\Theta,\om_{\g\oplus\g^*}),\]
so that the resulting diagram of Morita morphisms
\begin{equation}\label{eq:dia2} \xymatrix{
{\A_{TM}|_Z} \ar@{-->}[r]\ar @{-->}[d]  &{\A_{K\times G}^{\on{Spin}}|_{K\times\{e\}}}\ar@{-->}[d]\\
\A_{TM_\red}  \ar@{-->}[r] & \A_K^{\on{Spin}}
}\end{equation}
%\begin{equation}\label{eq:dia2}
% \begin{CD}
%\A_{TM}|_Z @>>> \A_{K\times G}^{\on{Spin}}|_{K\times\{e\}}\\
%@VVV @VVV\\ 
%\A_{TM_\red}@>>> \A_K^{\on{Spin}}
%\end{CD}\end{equation}
commutes up to 2-isomorphism.  Placing \eqref{eq:dia1} next to
\eqref{eq:dia2}, the Theorem follows.
\end{proof}

\begin{remark}
  If $e$ is a regular value of $\Phi_G$, but the action of $G$ on $Z$
  is not free, the reduced space $M_\red$ is usually an orbifold. The
  Theorem extends to this situation with obvious modifications.
\end{remark}

\begin{remark}
  Reduction at more general values $g\in G$ may be expressed in terms of
  reduction at $e$, using the \emph{shifting trick}: Let $G_g\subset
  G$ be the centralizer of $g$, and $\Ad(G)g^{-1}\cong G/G_{g^{-1}}$ the
  conjugacy class of $g^{-1}$. Then
\[ M\qu_g G:=\Phi_G^{-1}(g)/G_g=(M\times \Ad(G)g^{-1})\qu G\]
where $M\times \Ad(G).g^{-1}$ is the fusion product. Again, one finds that 
$g$ is a regular value of $\Phi_G$ if and only if the $G_g$-action on 
$\Phinv(g)$ is locally free, and if the action is free then $M\qu_g G$
is a q-Hamiltonian $K$-space. 
\end{remark}

\section{Hamiltonian $LG$-spaces}\label{sec:loop}
In his 1988 paper, Freed \cite{fr:lg} argued that for a compact,
simple and simply connected Lie group $G$, the canonical line bundle over the K\"ahler
manifold $LG/G$ (and over the other coadjoint orbits of the loop
group) is a $\wh{LG}$-equivariant Hermitian line bundle $K\to LG/G$,
where the central circle of $\wh{LG}$ acts with a weight $-2\cox$,
where $\cox$ is the dual Coxeter number. In \cite{me:can}, this was
extended to more general Hamiltonian $LG$-spaces. 

In this Section we will use the correspondence between Hamiltonian
$LG$-spaces and q-Hamiltonian $G$-spaces to give a new construction of
the canonical line bundle, in which it is no longer necessary to
assume $G$ simply connected. We begin by recalling the definition of a
Hamiltonian $LG$-space. Let $G$ be a compact Lie group, with a given
invariant inner product $B$ on its Lie algebra. We fix $s>1/2$, and
take take the loop group $LG$ to be the Banach Lie group of maps
$S^1\to G$ of Sobolev class $s+1/2$. Its Lie algebra $L\g$ consists of
maps $S^1\to \g$ of Sobolev class $s+1/2$. We denote by $L\g^*$ the
$\g$-valued 1-forms on $S^1$ of Sobolev class $s-1/2$, with the gauge
action $g\cdot\mu=\Ad_g(\mu)-g^*\theta^R$. A \emph{Hamiltonian
  $LG$-manifold} is a Banach manifold $N$ with an action of $LG$, an
invariant (weakly) symplectic 2-form $\sig\in \Om^2(N)$, and a smooth
$LG$-equivariant map $\Psi\colon N\to L\g^*$ satisfying the moment map
condition
\[ \iota(\xi^\sharp)\sig=-\d \l\Psi,\xi\r,\ \ \ \xi\in L\g.\]
Here the pairing between elements of $L\g^*$ and of $L\g$ is given by
the inner product $B$ followed by integration over $S^1$. 

Suppose now that $G$ is connected, and let $\P G$ be the space of
paths $\gamma\colon \R\to G$ of Sobolev class $s+1/2$ such that
$\pi(\gamma)=\gamma(t+1)\gamma(t)^{-1}$ is constant. The map
$\pi\colon \P G \to G$ taking $\gamma$ to this constant is a
$G$-equivariant principal $LG$-bundle, where $a\in G$ acts by
$\gamma\mapsto a\gamma$ and $\lambda\in LG$ acts by $\gamma\mapsto
\gamma\lambda^{-1}$. One has $\P G/G\cong L\g^*$ with quotient map
$\gamma\mapsto \gamma^{-1}\dot{\gamma}\d t$. Let $\wt{N}\to N$ be the
principal $G$-bundle obtained by pull-back of the bundle $\P G\to
L\g^*$, and $\wt{\Psi}\colon \wt{N}\to \P G$ the lifted moment map.
Then $\wt{\Psi}$ is $LG\times G$-equivariant. Since the $LG$-action on
$\P G$ is a principal action, the same is true for the action on
$\wt{N}$. Assuming that $\Psi$ (hence $\wt{\Psi}$) is \emph{proper},
one obtains a smooth \emph{compact} manifold $M=\ti{N}/LG$ with an induced
$G$-map $\Phi\colon M\to G=\P G/LG$.
\[\begin{CD}
\wt{N} @>{\wt{\Psi}}>> \P G\\
@V{\pi_M}VV @VV{\pi_G}V\\
M @>>{\Phi}> G
\end{CD}\]
In \cite{al:mom}, it was shown how 
to obtain an invariant 2-form $\om$ on $M$, making $(M,\om,\Phi)$ into
a q-Hamiltonian $G$-spaces. This construction sets up a 1-1
correspondence between Hamiltonian $LG$-spaces with proper moment maps
and q-Hamiltonian spaces. 

As noted in Remark \ref{rem:spinc}, the canonical twisted
$\Spin_c$-structure for $(M,\om,\Phi)$ defines a $G$-equivariant
Morita trivialization of the bundle $\E\colon \C\da
\Phi^*\A_G^{\on{Spin}^{\otimes 2}}$ over $M$. 
On the other hand, let $\wh{LG}^{\on{Spin}}$ be the pull-back of the
basic central extension $\wh{L\SO(\g)}$ under the adjoint action. 
By the discussion in Section \ref{subsec:loop}, the
pull-back bundle $\A_G^{\on{Spin}}$ to $\P G$ has a canonical
$\wh{LG}^{\on{Spin}}\!\!\times G$-equivariant Morita trivialization,
\[ \S_0\colon \C\da \pi_G^* \A_G^{\on{Spin}},\]
where the central circle of $\wh{LG}^{\on{Spin}}$ acts with weight $1$.
Tensoring $\S_0$ with itself, and pulling everything back to $\wh{N}$
we obtain two Morita trivializations $\pi_M^*\E$ and
$\wt{\Psi}^*(\S_0\otimes\S_0)$ of the Dixmier-Douady bundle
$\mathcal{C}$ over $\wt{N}$, given by the pull-back of
$\A_G^{\on{Spin}^{\otimes 2}}$ under $\Phi\circ \pi_M=\pi_G\circ
\wt{\Psi}$. Let
\[ \wt{K}:=\on{Hom}_{\mathcal{C}}(\wt{\Psi}^*(\S_0\otimes\S_0),
\pi_M^*\E)\]
Then $\wt{K}$ is a $\wh{LG}^{\on{Spin}}\!\!\times G$-equivariant
Hermitian line bundle, where the central circle in
$\wh{LG}^{\on{Spin}}$ acts with weight $-2$. Its quotient $K=
\wt{K}/G$ is the desired \emph{canonical bundle} for the
Hamiltonian $LG$-manifold $N$.
\begin{remark}
For $G$ simple and simply connected, the central extension
$\wh{LG}^{\on{Spin}}$ is the $\cox$-th power of the `basic central'
extension $\wh{LG}$. We may thus also think of $K_N$ as a
$\wh{LG}$-equivariant line bundle where the central circle acts with
weight $-2\cox$. 
\end{remark}
The canonical line bundle is well-behaved under symplectic reduction.
That is, if $e$ is a regular value of $\Phi$ then $0\in L\g^*$ is also
a regular value of $\Psi$, and $\Phi^{-1}(e)\cong \Psi^{-1}(0)$ as
$G$-spaces.  Assume that $G$ acts freely on these level sets, so that
$M\qu G=N\qu G$ is a symplectic manifold. The canonical line bundle
for $M\qu G$ is simply $K_{M\qu G}=K_N|_{\Psi^{-1}(0)}/G$. As in
\cite{me:can}, one can sometimes use this fact to compute the canonical
line bundle over moduli spaces of flat $G$-bundles over surfaces.

\begin{appendix}
\section{Boundary conditions}\label{subsec:boundary}
In this Section, we will prove several facts about the operator
$\f{\p}{\p t}$ on the complex Hilbert-space $L^2([0,1],\C^n)$,
with boundary conditions defined by $A\in \U(n)$,
\[\on{dom}(D_A)=\{f\in L^2([0,1],\C^n)|\ \dot{f}\in L^2([0,1],\C^n),\ f(1)=-Af(0)\}.\] 
Let $e^{\tpi\lambda^{(1)}},\ldots,e^{\tpi\lambda^{(n)}}$ be the
eigenvalues of $A$, with corresponding normalized eigenvectors
$v^{(1)},\ldots,v^{(n)}\in\C^n$.  Then the spectrum of $D_A$ is given
by the eigenvalues
$ \tpi (\lambda^{(r)}+ k-\hh),\ \ k\in\Z,\ r=1,\ldots,n$
with eigenfunctions 
\[ \phi_{k}^{(r)}(t)=\exp(\tpi \,  (\lambda^{(r)}+ k-\hh)\,t)\,v^{(r)}.
\]
We define $J_A=\i\sign(-\i D_A)$; this coincides with $J_A=D_A/|D_A| $
if $D_A$ has trivial kernel.
\begin{proposition}\label{prop:p1}
  Let $A,A'\in \on{U}(n)$. Then $J_{A'}-J_A$ is
  Hilbert-Schmidt if and only if $A'=A$.
\end{proposition}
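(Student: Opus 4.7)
The ``if'' direction is immediate, so the substance is the ``only if'' part: assuming $A\neq A'$, we must show $J_{A'}-J_A$ is not Hilbert-Schmidt. My plan is to compute $\|J_{A'}-J_A\|_{HS}^2$ directly in the orthonormal basis $\{\phi_k^{(r)}\}$ of $L^2([0,1],\C^n)$ given in the statement, and exhibit a logarithmic divergence from the high-frequency sectors. Denote by $w^{(s)}$ and $e^{\tpi\mu^{(s)}}$ the eigenvectors and eigenvalue exponents for $A'$, with associated eigenfunctions $\psi_l^{(s)}$ of $D_{A'}$ and sign pattern $\eps'_{l,s}:=\sign(\mu^{(s)}+l-\hh)$; similarly $\eps_{k,r}:=\sign(\lam^{(r)}+k-\hh)$ determines $J_A$.

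The key input is the matrix element
\[ c_{k,r,l,s}:=\l\psi_l^{(s)},\phi_k^{(r)}\r=\l w^{(s)},v^{(r)}\r_{\C^n}\int_0^1 e^{\tpi(\lam^{(r)}-\mu^{(s)}+k-l)t}\,dt.\]
If $\lam^{(r)}\equiv\mu^{(s)}\pmod\Z$ the integral equals $\delta_{kl}$, and in that block $\eps_{k,r}=\eps'_{k,s}$, so such sectors contribute nothing to $J_{A'}-J_A$. If $\lam^{(r)}\not\equiv\mu^{(s)}\pmod\Z$, evaluating the integral yields
\[|c_{k,r,l,s}|^2=\frac{|\l w^{(s)},v^{(r)}\r|^2\,|e^{\tpi\lam^{(r)}}-e^{\tpi\mu^{(s)}}|^2}{4\pi^2\,(\lam^{(r)}-\mu^{(s)}+k-l)^2}.\]

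Expanding $(J_{A'}-J_A)\phi_k^{(r)}=\sum_{l,s}c_{k,r,l,s}(i\eps'_{l,s}-i\eps_{k,r})\psi_l^{(s)}$, taking squared norms, and summing over $k$ and $r$, each sector with $\lam^{(r)}\not\equiv\mu^{(s)}\pmod\Z$ contributes a nonnegative constant proportional to $|\l w^{(s)},v^{(r)}\r|^2$ times the sum of $(\lam^{(r)}-\mu^{(s)}+k-l)^{-2}$ over pairs $(k,l)$ with $\eps_{k,r}\neq \eps'_{l,s}$. The sign-mismatch condition restricts $(k,l)$ to two ``sign-transition strips'' (one of $k,l$ large positive, the other non-positive); setting $m=k-l$, the count of admissible pairs with fixed $m$ grows linearly in $|m|$, so the sum collapses to a harmonic tail $\sum_{m\geq 1} |m|/m^2$, which diverges.

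Hence the Hilbert-Schmidt condition forces $\l w^{(s)},v^{(r)}\r=0$ whenever $\lam^{(r)}\not\equiv\mu^{(s)}\pmod\Z$: the eigenspaces of $A$ and $A'$ for distinct eigenvalues are orthogonal. Since eigenvectors of $A$ (resp.\ of $A'$) span $\C^n$, this forces every eigenspace of $A$ to coincide with the eigenspace of $A'$ for the same eigenvalue, and $A=A'$ follows. The main nuisance in executing this plan will be the finitely many indices for which $\lam^{(r)}+k-\hh=0$ or $\mu^{(s)}+l-\hh=0$, where $J$ is declared to vanish on the kernel rather than being $\pm i$; since only finitely many terms in each sector are affected, this perturbation does not disturb the asymptotic harmonic divergence that drives the argument.
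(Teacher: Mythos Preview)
Your argument is correct and follows essentially the same route as the paper: both compute the same matrix elements $|\langle\phi_k^{(r)},\psi_l^{(s)}\rangle|^2$ and extract a divergent harmonic tail from the sign-mismatch region. The only organizational difference is that the paper works with the projections $\Pi,\Pi'$ (noting $J_{A'}-J_A=2i(\Pi'-\Pi)$ up to finite rank) and asserts directly that $A\neq A'$ gives a pair $(r,s)$ with distinct eigenvalues and $\langle v^{(r)},v'^{(s)}\rangle\neq 0$; you instead run the contrapositive, deducing from finiteness that all such inner products vanish and then concluding $A=A'$ by the elementary linear-algebra step you spell out. Your version makes that step explicit, which is a mild improvement in clarity.
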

\begin{proof}
Suppose $A'\not=A$. Let $\Pi,\Pi'$ be the orthogonal projection operators onto
$\ker(J_A-\i),\ \ker(J_{A'}-\i)$. It suffices to show that $\Pi'-\Pi$
is not Hilbert-Schmidt, i.e. that $(\Pi'-\Pi)^2$ is not trace class.
Since
\[(\Pi-\Pi')^2=\Pi(I-\Pi')\Pi+(I-\Pi)\Pi'(I-\Pi).\]
is a sum of two positive operators, it suffices to show that
$\Pi(I-\Pi')\Pi$ is not trace class. 
%We have (using bra-ket notation)
%
%\[ \begin{split}
%\Pi&=\sum_{r=1}^n \sum_{k>-\lambda^{(r)}}
%|\phi_{k}^{(r)}\r\l\phi_{k}^{(r)}|,\\
%I-\Pi'&=\sum_{r=1}^n \sum_{l\le -{\lambda'}^{(s)}}
%|{\phi'}_{l}^{(s)}\r\l{\phi'}_{l}^{(s)}|.
%\end{split}\]
%
Let $\phi_l^{'(s)}$ be the eigenfunctions of $D_{A'}$, defined similar
to those for $D_A$, with eigenvalues $\tpi (\lambda^{'(s)}+ l-\hh)$. 
Indicating the eigenvalues and eigenfunctions for $A'$ by a prime $'$,
we have
\[ \on{tr}(\Pi(I-\Pi')\Pi)=\sum \Big|\l\phi_{k}^{(r)},\phi_{l}^{'(s)}\r\Big|^2.\]
 where the sum is over all $k,r,l,s$ satisfying
$\lambda^{(r)}+k-\hh>0$ and $\lambda^{'(s)}+l-\hh\le 0$. But
\[ \Big|\l\phi_{k}^{(r)},\phi_{l}^{'(s)}\r\Big|^2=
\Big|\f{\l v^{(r)},v^{'(s)}\r
  \,\,(e^{\tpi(\lambda^{'(s)}-\lambda^{(r)})}-1)}{2\pi (\lambda^{'(s)}-\lambda^{(r)}+l-k)}\Big|^2.\]
Since $A'\not=A$, we can choose $r,s$ such that
\[ e^{\tpi\lambda^{(r)}}\not=e^{\tpi\lambda^{'(s)}}\ \ \mbox{ and }\ \ \l
v^{(r)},v^{'(s)}\r\not=0.\] For such $r,s$, the enumerator is a
non-zero constant, and the sum over $k,l$ is divergent.
\end{proof}

\begin{proposition}\label{prop:p2}
  Given $A,A'\in \U(n)$, let
\[ \gamma\colon [0,1]\to \on{Mat}_n(\C)\]
be a continuous map with 
\[A'\gamma(0)=\gamma(1)A,\] 
and such that
$\dot{\gamma}\in L^\infty([0,1],\on{Mat}_n(\C))$.  Let $M_\gamma$ be
the bounded operator on $L^2([0,1],\C^n)$ given as multiplication by
$\gamma$. Then
\[ M_\gamma J_A -J_{A'}M_\gamma\] is
Hilbert-Schmidt.
\end{proposition}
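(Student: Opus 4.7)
My plan is to prove the statement by direct computation of matrix elements in the eigenbases of $D_A$ and $D_{A'}$. The operator $D_A$ has an explicit orthonormal eigenbasis $\phi_k^{(r)}(t) = \exp(\tpi(\lambda^{(r)}+k-\hh)t)v^{(r)}$ with eigenvalues $\tpi(\lambda^{(r)}+k-\hh)$, and analogously $\phi_l'^{(s)}$ for $D_{A'}$. Let $\epsilon_{k,r} = \sign(\lambda^{(r)}+k-\hh) \in \{-1,0,+1\}$, so that $J_A\phi_k^{(r)}=\i\epsilon_{k,r}\phi_k^{(r)}$, and similarly $\epsilon'_{l,s}$. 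Writing $c_{l,s}^{k,r} = \l\phi_l'^{(s)}, M_\gamma\phi_k^{(r)}\r$, the matrix element of $M_\gamma J_A - J_{A'}M_\gamma$ is $\i(\epsilon_{k,r}-\epsilon'_{l,s})c_{l,s}^{k,r}$, so the Hilbert–Schmidt norm squared equals
\[ \sum_{k,r,l,s}(\epsilon_{k,r}-\epsilon'_{l,s})^2|c_{l,s}^{k,r}|^2, \]
and the factor $(\epsilon_{k,r}-\epsilon'_{l,s})^2 \leq 4$ vanishes unless the eigenvalues of $D_A$ and $D_{A'}$ corresponding to the two indices lie on opposite sides of zero.

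Next I would compute $c_{l,s}^{k,r}$ explicitly: setting $f_{r,s}(t) = \l v'^{(s)},\gamma(t)v^{(r)}\r$, one has $c_{l,s}^{k,r} = \int_0^1 e^{\tpi(\lambda^{(r)}-\lambda'^{(s)}+k-l)t}f_{r,s}(t)\,dt$. The boundary condition $A'\gamma(0)=\gamma(1)A$ translates, using $A v^{(r)} = e^{\tpi\lambda^{(r)}}v^{(r)}$ and $(A')^*v'^{(s)}=e^{-\tpi\lambda'^{(s)}}v'^{(s)}$, into the relation
\[ g_{r,s}(1) = g_{r,s}(0), \qquad g_{r,s}(t) := e^{\tpi(\lambda^{(r)}-\lambda'^{(s)})t}f_{r,s}(t). \]
Thus $g_{r,s}$ extends to a $1$-periodic continuous function on $\R$, and $c_{l,s}^{k,r} = \hat{g}_{r,s}(l-k)$, the $(l-k)$-th Fourier coefficient.

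The assumption $\dot\gamma\in L^\infty$ gives $g_{r,s}\in H^1(\R/\Z)$, so by Parseval $\sum_{n\in\Z} n^2|\hat g_{r,s}(n)|^2 < \infty$. Finally, for fixed $r,s$, the contributing pairs $(k,l)$ are those (in bounded shift of) $k\geq 1$, $l\leq 0$ (or vice versa), and for each $n=l-k$ the number of such pairs is $|n|$. Therefore
\[ \sum_{k,l}(\epsilon_{k,r}-\epsilon'_{l,s})^2|c_{l,s}^{k,r}|^2 \;\leq\; 4\sum_{n\neq 0} |n|\,|\hat g_{r,s}(n)|^2 \;\leq\; 4\sum_{n\in\Z} n^2|\hat g_{r,s}(n)|^2 \;<\; \infty. \]
Summing over the finitely many $r,s\in\{1,\dots,n\}$ gives the desired Hilbert–Schmidt bound. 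The main (minor) obstacle is to handle carefully the edge cases where some $\epsilon_{k,r}=0$ or $\lambda^{(r)}=\lambda'^{(s)}\pmod{\Z}$: these contribute only finitely many additional terms and can be absorbed into the estimate, since the relevant Fourier coefficients are bounded and only a bounded number of index pairs is involved.
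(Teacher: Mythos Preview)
Your proof is correct and follows essentially the same route as the paper's: both reduce to summing $|\langle\phi_l'^{(s)},M_\gamma\phi_k^{(r)}\rangle|^2$ over index pairs with eigenvalues on opposite sides of zero, observe that the matrix element depends only on $k-l$ (your Fourier-coefficient formulation is exactly the paper's shift invariance $\langle\phi_k'^{(r)}|M_\gamma|\phi_l^{(s)}\rangle=\langle\phi_{k+n}'^{(r)}|M_\gamma|\phi_{l+n}^{(s)}\rangle$), count $|n|$ contributing pairs with $l-k=n$, and gain the extra factor of $n$ from $\dot\gamma\in L^\infty$. Your periodicity of $g_{r,s}$ is precisely the cancellation of boundary terms in the paper's integration by parts, and your Parseval estimate for $H^1$ is its $\sum m^2|\cdot|^2<\infty$ bound.
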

\begin{proof}
  This is a mild extension of Proposition(6.3.1) in Pressley-Segal
  \cite[page 82]{pr:lo}, and we will follow their line of argument.
  Using the notation from the proof of Proposition \ref{prop:p1}, it
  suffices to show that $M_\gamma \Pi -\Pi' M_\gamma$ is
  Hilbert-Schmidt, or equivalently that both $(I-\Pi')M_\gamma \Pi$ and
  $\Pi' M_\gamma (I-\Pi)$ are Hilbert-Schmidt. We will give the
  argument for $\Pi' M_\gamma (I-\Pi)$, the discussion for $
  (I-\Pi')M_\gamma \Pi$ is similar. We must prove that
\[ \begin{split}
\on{tr}((\Pi' M_\gamma (I-\Pi))(\Pi' M_\gamma (I-\Pi))^*)&=
\on{tr}(\Pi' M_\gamma (I-\Pi) M_\gamma^*)\\& =\sum |\l
\phi_k^{'(r)}|\,M_\gamma|\,\phi_l^{(s)}\r|^2<\infty,\end{split}\] 
where the sum is over all $k,r$ with $\lambda^{'(r)}+k-\hh > 0$ and
over all $l,s$ with $\lambda^{(s)}+l-\hh \le 0$. Changing the sum by
only finitely many terms, we may replace this with a summation over
all $k,r,l,s$ such that $k>0$ and $l\le 0$. Since
$\l\phi_k^{'(r)}|M_{{\gamma}}|\phi_l^{(s)}\r=\l\phi_{k+n}^{'(r)}|M_{{\gamma}}|\phi_{l+n}^{(s)}\r$
for all $n\in \Z$, and since there are $m$ terms with fixed
$k-l=m$, the assertion is equivalent to
\begin{equation}\label{eq:est1} \sum_{r,s}\sum_{m>0} m\ 
|\l\phi_0^{'(r)}|M_{{\gamma}}|\phi_m^{(s)}\r|^2<\infty.\end{equation}
To obtain this estimate, we use $\dot{\gamma}\in
L^\infty([0,1],\on{Mat}_n(\C))$. We have
\[\sum_{r,s}\sum_{m\in\Z}
|\l \phi_0^{'(r)}|M_{\dot{\gamma}}|\phi_m^{(s)}\r|^2
=\sum_r ||M_{\dot{\gamma}}^*\phi_0^{'(r)}||^2<\infty.\]
An
integration by parts shows
\[ \begin{split}
\l\phi_0^{'(r)}|M_{\dot{\gamma}}|\phi_m^{(s)}\r=&-\tpi(\lambda^{(s)}-\lambda^{'(r)}+m)
\l\phi_0^{'(r)}|M_{{\gamma}}|\phi_m^{(s)}\r\\
&+ \l\phi_0^{'(r)}(1)|\gamma(1)| \phi_m^{(s)}(1)\r
- \l\phi_0^{'(r)}(0)|\gamma(0)|\phi_m^{(s)}(0)\r.
\end{split}\]
The boundary terms cancel since $A'\gamma(0)=\gamma(1) A$, and
\[\phi_0^{'(r)}(1)=-A'\phi_0^{'(r)}(0),\ \ \phi_m^{(s)}(1)=-A \phi_m^{(s)}(0).\]% 
Hence we obtain
\[\sum_{r,s}\sum_{m\in\Z} (\lambda^{(s)}-\lambda^{'(r)}+ m)^2\   |\l
\phi_0^{'(r)}|M_\gamma|\phi_m^{(s)}\r|^2 <\infty \]
which implies \eqref{eq:est1}. 
\end{proof}

\begin{proposition}\label{prop:p3}
  Let $A\in \U(n)$, and let $\mu\in L^\infty([0,1],\mf{u}(n))$.
  Consider $D_{A,\mu}=D_A+M_\mu$ with domain equal to that of $D_A$,
  and define $J_{A,\mu}$ similar to $J_A$.  Then $J_{A,\mu}-J_A$ is
  Hilbert-Schmidt.
\end{proposition}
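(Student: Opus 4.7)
The plan is to reduce Proposition \ref{prop:p3} to Proposition \ref{prop:p2} by conjugating $D_{A,\mu}$ into a pure boundary-condition operator $D_{A'}$ via a time-dependent unitary change of variables. Since $\mu\in L^\infty([0,1],\mf{u}(n))$, the initial value problem $\dot\gamma = -\mu\gamma$, $\gamma(0)=I$ has a unique absolutely continuous solution $\gamma\colon[0,1]\to \U(n)$ (fiberwise unitarity follows because $\mu$ is skew-Hermitian), and $\dot\gamma = -\mu\gamma$ lies in $L^\infty([0,1],\on{Mat}_n(\C))$. Set $A' = \gamma(1)^{-1}A$.

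The key computation is that $M_\gamma$ intertwines $D_{A'}$ with $D_{A,\mu}$. Indeed, for $g\in\on{dom}(D_{A'})$, the function $f=\gamma g$ satisfies $f(1) = \gamma(1)g(1) = -\gamma(1)A' g(0) = -A g(0) = -Af(0)$, so $f\in\on{dom}(D_A)=\on{dom}(D_{A,\mu})$, and
\[
(D_{A,\mu}f)(t) = \dot\gamma g + \gamma\dot g + \mu\gamma g = \gamma\dot g = (M_\gamma D_{A'}g)(t),
\]
since $\dot\gamma + \mu\gamma = 0$. Thus $M_\gamma^{-1}D_{A,\mu}M_\gamma = D_{A'}$, and because $M_\gamma$ is unitary on $L^2([0,1],\C^n)$, functional calculus gives $J_{A,\mu} = M_\gamma J_{A'}M_\gamma^{-1}$.

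Now write
\[
J_{A,\mu} - J_A = M_\gamma J_{A'}M_\gamma^{-1} - J_A = \bigl(M_\gamma J_{A'} - J_A M_\gamma\bigr)M_\gamma^{-1}.
\]
Proposition \ref{prop:p2} (applied with the roles of $A$ and $A'$ interchanged) asserts that $M_\gamma J_{A'} - J_A M_\gamma$ is Hilbert-Schmidt provided $A\gamma(0) = \gamma(1)A'$ and $\dot\gamma\in L^\infty$. Both hypotheses hold by construction: $\gamma(1)A' = \gamma(1)\gamma(1)^{-1}A = A = A\gamma(0)$, and $\dot\gamma = -\mu\gamma\in L^\infty$. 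Multiplying a Hilbert-Schmidt operator by the bounded (unitary) operator $M_\gamma^{-1}$ preserves the Hilbert-Schmidt property, so $J_{A,\mu}-J_A$ is Hilbert-Schmidt as claimed.

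The substantive content is entirely packed into Proposition \ref{prop:p2}; the only thing to verify here is that the conjugation trick reduces a perturbation of the operator (by $M_\mu$) to a perturbation of the boundary condition (by passing from $A$ to $A' = \gamma(1)^{-1}A$). The main technical point one must be careful about is the regularity of $\gamma$: since $\mu$ is only $L^\infty$, $\gamma$ is not $C^1$, but it is absolutely continuous with $\dot\gamma\in L^\infty$, which is exactly the hypothesis of Proposition \ref{prop:p2}. No other obstacle arises.
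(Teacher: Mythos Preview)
Your proof is correct and follows essentially the same approach as the paper: solve the initial value problem $\dot\gamma=-\mu\gamma$, $\gamma(0)=I$ to conjugate $D_{A,\mu}$ into $D_{A'}$ with $A'=\gamma(1)^{-1}A$, and then invoke Proposition~\ref{prop:p2}. Your write-up is in fact slightly more careful than the paper's in spelling out the regularity of $\gamma$ and verifying the hypothesis $A\gamma(0)=\gamma(1)A'$ of Proposition~\ref{prop:p2}.
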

\begin{proof}
  Let $\gamma\in C([0,1],\on{U}(n))$ be the solution of the initial
  value problem $\dot{\gamma}\gamma^{-1}=-\mu$ with $\gamma(0)=I$. Let
  $A=\gamma(1) A'$. The operator $M_\gamma$ of multiplication by
  $\gamma$ takes $\on{dom}(D_{A'})$ to $\on{dom}(D_{A})$, and
\[ M_\gamma D_{A'}
M_\gamma^{-1}=D_{A}-\dot{\gamma}\gamma^{-1}=D_{A,\mu}.\]
Hence $M_\gamma J_{A'} M_{\gamma^{-1}}=J_{A,\mu}$. By Proposition
\ref{prop:p2}, $M_\gamma J_{A'} M_{\gamma^{-1}}$
differs from $J_A$ by a Hilbert-Schmidt operator.
\end{proof}

Let us finally consider the continuity properties of the family of
operators $D_A,\ A\in \on{U}(n)$. Recall \cite[Chapter VIII]{ree:fu}
that the \emph{norm resolvent topology} on the set of unbounded
skewadjoint operators on a Hilbert space is defined by declaring that
a net $D_i$ converges to $D$ if and only if $R_1(D_i)=(D_i-I)^{-1}\to
R_1(D)=(D-I)^{-1}$ in norm.  This then implies that $R_z(D_i)\to
R_z(D)$ in norm, for any $z$ with non-zero real part, and in fact
$f(D_i)\to f(D)$ in norm for any bounded continuous function $f$. For
bounded operators, convergence in the norm resolvent topology is
equivalent to convergence in the norm topology. 
%Also, if $Q_i,Q$ are
%bounded operators with $Q_i\to Q$, and if $D_i\to D$ in the norm
%resolvent topology, then $D_i+Q_i\to D+Q$.

\begin{proposition}\label{prop:resolv}
The map $A\mapsto D_A$ is continuous in the norm resolvent topology.
\end{proposition}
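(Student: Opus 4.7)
The plan is to fix $A_0 \in \U(n)$ and show that $R_1(D_A) \to R_1(D_{A_0})$ in operator norm as $A \to A_0$. The main obstacle is that different $A$ give operators on different domains, so we cannot apply resolvent identities directly. The strategy is to conjugate $D_A$ by a unitary multiplication operator so that it acts on the fixed domain $\on{dom}(D_{A_0})$, just as in the proof of Proposition~\ref{prop:p3}.

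First I would use the exponential map of $\U(n)$ to choose the conjugating factor. For $A$ in a small neighborhood of $A_0$, the element $AA_0^{-1}$ lies near $I$, so set $\xi_A = \log(AA_0^{-1})\in\mathfrak{u}(n)$ via the local inverse of the exponential; this depends continuously on $A$ and $\xi_{A_0}=0$. Define $\gamma_A(t)=\exp(t\xi_A)$, which is a smooth unitary path with $\gamma_A(0)=I$, $\gamma_A(1)=AA_0^{-1}$, and constant logarithmic derivative $\gamma_A^{-1}\dot\gamma_A = \xi_A$. If $f\in\on{dom}(D_{A_0})$ then $M_{\gamma_A} f$ satisfies $(M_{\gamma_A}f)(1) = \gamma_A(1)f(1) = -AA_0^{-1}A_0 f(0) = -A (M_{\gamma_A}f)(0)$, so $M_{\gamma_A}$ maps $\on{dom}(D_{A_0})$ bijectively onto $\on{dom}(D_A)$. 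A direct computation on this domain gives
\[ M_{\gamma_A}^{-1}\, D_A\, M_{\gamma_A} \;=\; D_{A_0} + M_{\xi_A}, \]
where $M_{\xi_A}$ is multiplication by the constant matrix $\xi_A$. Hence
\[ R_1(D_A) \;=\; M_{\gamma_A}\, R_1\!\big(D_{A_0}+M_{\xi_A}\big)\, M_{\gamma_A}^{-1}. \]

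With this identity the continuity reduces to two easy facts. (i) As $A\to A_0$, $\xi_A\to 0$ in $\mathfrak{u}(n)$; therefore $\sup_{t\in[0,1]}\|\gamma_A(t)-I\|\to 0$, so $M_{\gamma_A}\to I$ in operator norm. (ii) Since $D_{A_0}$ is skew-adjoint and $M_{\xi_A}$ is skew-adjoint, both $D_{A_0}$ and $D_{A_0}+M_{\xi_A}$ have $\|R_1(\cdot)\|\le 1$; combined with the second resolvent identity
\[ R_1(D_{A_0}+M_{\xi_A})-R_1(D_{A_0}) \;=\; -R_1(D_{A_0}+M_{\xi_A})\,M_{\xi_A}\,R_1(D_{A_0}) \]
and $\|M_{\xi_A}\|=\|\xi_A\|\to 0$, this gives $R_1(D_{A_0}+M_{\xi_A})\to R_1(D_{A_0})$ in norm. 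Putting (i) and (ii) together yields norm convergence of $R_1(D_A)$ to $R_1(D_{A_0})$.

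I do not expect any serious obstacle beyond checking that $\gamma_A$ genuinely intertwines the domains and that the algebraic identity for $M_{\gamma_A}^{-1}D_A M_{\gamma_A}$ holds; both amount to short computations using $\dot\gamma_A = \gamma_A\xi_A$. Uniform boundedness of the resolvents, which is usually the delicate point in norm resolvent arguments, is automatic here from skew-adjointness.
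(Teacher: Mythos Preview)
Your proposal is correct and is essentially the same argument as the paper's: both conjugate $D_A$ by the unitary multiplication operator $M_{\gamma_A}$ with $\gamma_A(t)=\exp(t\,\xi_A)$ to reduce to a bounded skew-adjoint perturbation $D_{A_0}+M_{\xi_A}$ on the fixed domain, and then combine $\|M_{\gamma_A}-I\|\to 0$ with the second resolvent identity and the uniform bound $\|R_1(\cdot)\|\le 1$. The only cosmetic difference is that the paper packages the estimate as the explicit inequality $\|R_1(D_{\exp(a)A_0})-R_1(D_{A_0})\|\le 3\|a\|$, while you phrase it as a convergence statement.
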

\begin{proof}
  We will use that $||R_1(D)||=||(D-I)^{-1}||<1$ for any skew-adjoint operator
  $D$. Let us check continuity at any given $A\in \on{U}(n)$.
  Given $a\in \mf{u}(n)$, let us write $D_a=D_{\exp(a)A}$. We will
  prove continuity at $A$ by showing that
\[ ||R_1(D_a)-R_1(D_0)||\le 3||a||.\]
Let $U_a\in \on{U}(\V)$ be the operator of
pointwise multiplication by $\exp(ta)\in \on{U}(V)$. Then
\[ ||U_a-U_0||=\on{sup}_{t\in[0,1]}||\exp(ta)-I||\le ||a||.\] 
The operator $U_a$ takes the domain of $D_0$ to that of
$D_a$, since $f(1)=-A f(0)$ implies $(U_a f)(1)=
\exp(a)f(1)=-\exp(a)A f(0)$. Furthermore, 
\[ D_a=U_a (D_0+M_a) U_a^{-1}\]
Hence 
\[ R_1(D_a)=U_a\  R_1(D_0+M_a) \ U_a^{-1}.\]
The second resolvent identity
$R_1(D_0+M_a)-R_1(D_0)=R_1(D_0+M_a) M_a R_1(D_0)$ shows
\[ ||R_1(D_0+M_a)-R_1(D_0)||\le ||M_a||=||a|| .\]
Hence
\[ \begin{split} \lefteqn{||R_1(D_a)-R_1(D_0)|| = ||U_a R_1(D_0+M_a) U_a^{-1}
  - U_0 R_1(D_0) U_0^{-1}|| }\\
&\le  ||(U_a-U_0)R_1(D_0+M_a) U_a^{-1}||+||U_0 R_1(D_0+M_a)
(U_a^{-1}-U_0^{-1})||\\ &
\ \ +||U_0 (R_1(D_0+M_a)-R_1(D_0))U_0^{-1}|| \\
&\le 2 ||a||\ ||R_1(D_0+M_a)|| +||R_1(D_0+M_a)-R_1(D_0)||< 3 ||a||. \qedhere
\end{split}
\]
\end{proof}

\section{The Dixmier-Douady bundle over $S^1$}\label{app:so2}
Let $S^1=\R/\Z$ carry the \emph{trivial} action of $S^1$. The Morita
isomorphism classes of $S^1$-equivariant Dixmier-Douady bundles $\A\to
S^1$ are labeled by their class
\[ \on{DD}_{S^1}(\A)\in H^3_{S^1}(S^1,\Z)
\times H^1(S^1,\Z_2).\]
The bundle corresponding to $x\in H^3_{S^1}(S^1,\Z)=H^2_{S^1}(\pt)=\Z$
and $y\in H^1(S^1,\Z_2)=H^0(\pt,\Z_2)=\Z_2$ may be described as
follows. Let $L_{(x,y)}\cong\C$ be the $\Z_2$-graded
$S^1$-representation, of parity given by the parity of $y$, and with
$S^1$-weight given by $x$.  Choose a $\Z_2$-graded $S^1$-equivariant
Hilbert space $\H$ with an equivariant isomorphism $\tau\colon \H\to
\H\otimes L$ preserving $\Z_2$-gradings. Then $\tau$ induces an
$S^1$-equivariant $*$-homomorphism 
\[ \ol{\tau}\colon \K(\H)\to \K(\H\otimes L)=\K(\H),\] 
preserving $\Z_2$-gradings. The bundle $\A\to S^1$ with Dixmier-Douady
class $(x,y)$ is obtained from the trivial bundle $[0,1]\times
\K(\H)$, using $\ol{\tau}$ to glue $\{0\}\times \K(\H)$ and $\{1\}\times
\K(\H)$. Given another choice $\H',\tau'$, one obtains a Morita
isomorphism $\E\colon \A\to \A'$, where $\E$ is obtained from a 
similar boundary identification for $[0,1]\times \K(\H',\H)$.

A convenient choice of $H,\tau$ defining the bundle with $x=1,y=1$ is
as follows.  Let $\H$ be a Hilbert space with orthonormal basis of the
form $s_K$, indexed by the subsets $K=\{k_1,k_2,\ldots\}\subset \Z$ 
such that $k_1>k_2>\cdots$ and $k_l=k_{l+1}+1$ for $l$ sufficiently
large. Let
\[ m_K=\# \{k\in K|k>0\}-\# \{k\in \Z-K|\ k\le 0\}.\]
Let $\H$ carry the $S^1$-action such that $s_K$ is a weight vector of
weight $m_K$, and a $\Z_2$-grading, defined by the weight spaces of
even/odd weight.  Let $\tau(K)=\{k+1|\ k\in K\}$.  Then
$m_{\tau(K)}=m_K+1$, hence the automorphism $\tau\colon \H\to \H$
taking $s_K$ to $s_{\tau(K)}$ has the desired properties. 

The Hilbert space $\H$ can also be viewed as a spinor module. Let $\V$
be a real Hilbert space, with complexification $\V^\C$, and let $f_k,\ 
k\in\Z$ be vectors such that $f_k$ together with $f_k^*$ are an
orthonormal basis. The elements $s_K$ for $K=\{k_1,k_2,\cdots\}$ with
$k_1>k_2>\cdots$ are written as formal infinite wedge products
\[ s_K=f_{k_1}\wedge f_{k_2}\wedge \cdots\]
suggesting the action of the Clifford algebra: $\varrho(f_k)$ acts by
exterior multiplication, while $\varrho(f_{k^*})$ acts by
contraction. The automorphism $\tau\in \U(\H)$ is an
implementer of the orthogonal transformation $T\in \on{O}(V)$, 
\begin{equation} \label{eq:t}
Tf_k=f_{k+1},\ \ Tf_k^*=f_{k+1}^*.\end{equation} 
Let us denote the resulting Dixmier-Douady bundle by $\A_{(1,1)}$. 

\begin{proposition}
The Dixmier-Douady bundle $\A_{(1,1)}\to S^1$ is equivariantly
isomorphic to the Dixmier-Douady bundle  $\A\to \SO(2)\cong S^1$, 
constructed as in Section
\ref{subsec:orth}.
\end{proposition}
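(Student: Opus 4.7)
The plan is to pull back $\A_{\SO(2)}$ along the loop $R\colon [0,1]\to \SO(2),\ t\mapsto R_{2\pi t}$ generating $\pi_1(\SO(2))=\Z$, show that this pull-back is canonically trivialized over $[0,1]$, and identify the resulting clutching automorphism of $\K(\S_0)$ at $\{0\}\cup\{1\}$ with the automorphism $\bar\tau$ used to construct $\A_{(1,1)}$.

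Step 1. Use the loop group description of Section~\ref{subsec:loop}: the bundle $\pi^*\A_{\SO(2)}$ over $\P\SO(2)$ is canonically the trivial bundle $\P\SO(2)\times\K(\S_0)$, where $\V=L^2([0,1],\R^2)$ is equipped with the complex structure $J_0=i\sign(-iD_I)$ coming from the anti-periodic $\f{\p}{\p t}$, and $\S_0$ is the associated spinor module. Pulling the path $R$ up through $\P\SO(2)\to \SO(2)$ (which is possible since $\P\SO(2)\to\SO(2)$ has $R$ itself as a section) gives a trivialization of $R^*\A_{\SO(2)}$ over $[0,1]$; the clutching at $t=0,1$ is the automorphism of $\K(\S_0)$ induced by the orthogonal transformation $M_R$ of pointwise multiplication by $R_{2\pi t}$, viewed as a loop in $L\SO(2)\subset \on{O}_\res(\V)$.

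Step 2. Diagonalize the $S^1$-action. Decompose $\C^2=\C\epsilon_+\oplus\C\epsilon_-$ into weight $\pm 1$ spaces for the rotation action, and define $f_k:=e^{i\pi(2k-1)t}\otimes\epsilon_+\in\V^\C$ for $k\in\Z$. Then $f_k^*=e^{i\pi(2(1-k)-1)t}\otimes\epsilon_-$, and $\{f_k,f_k^*\}_{k\in\Z}$ is an orthonormal basis of $\V^\C$ paired with complex conjugation exactly as in the setup for $\A_{(1,1)}$. A direct computation gives $M_R f_k=f_{k+1}$ and $M_R f_k^*=f_{k+1}^*$, so $M_R$ coincides with the shift $T$ from \eqref{eq:t}.

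Step 3. Identify the polarizations. The $(+i)$-eigenspace of $J_0=i\sign(-iD_I)$ is $\V_+=\overline{\on{span}}\{f_k:k\ge 1\}\oplus\overline{\on{span}}\{f_k^*:k\le 0\}$, because $-iD_I$ acts with sign $\sign(2k-1)$ on $f_k$. The Fock space $\S_0=\overline{\wedge\V_+}$ therefore has the semi-infinite-wedge orthonormal basis $s_K=f_{k_1}\wedge f_{k_2}\wedge\cdots$ indexed by admissible $K\subset\Z$, agreeing with the basis in the $\A_{(1,1)}$ construction. Since $T$ shifts $f_k\mapsto f_{k+1}$ and $f_k^*\mapsto f_{k+1}^*$, the unique-up-to-phase unitary implementer of $T$ is precisely the map $\tau$ sending $s_K\mapsto s_{\tau(K)}$; the induced $*$-automorphism $\bar\tau$ of $\K(\S_0)$ is thus the clutching map on both sides.

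Step 4. Check $S^1$-equivariance. The rotation action of $S^1$ on $\R^2$ lifts to $\V$ pointwise and to $\S_0$ via the spin representation. By construction $f_k$ has $S^1$-weight $+1$ and $f_k^*$ has weight $-1$, so $s_K$ has weight
\[ \#\{k\in K:k>0\}-\#\{k\in\Z\smallsetminus K:k\le 0\}=m_K,\]
matching the prescription defining $\A_{(1,1)}$. Conjugation by $S^1$ on $\SO(2)$ is trivial, so equivariance of the identification is automatic.

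Combining the four steps, $R^*\A_{\SO(2)}$ and $R^*\A_{(1,1)}$ are both obtained from $[0,1]\times\K(\S_0)$ by the same equivariant clutching $\bar\tau$, so $\A_{\SO(2)}\cong \A_{(1,1)}$ as $S^1$-equivariant Dixmier-Douady bundles. The main technical point is the concrete identification in Step~3 between the spinor module coming from the boundary-value Fredholm operator $D_I$ and the Dirac-sea presentation of $\H$ underlying the construction of $\A_{(1,1)}$; once the bases and polarizations are matched up, the equality of the two implementers of the shift $T$ is immediate from their uniqueness up to a phase.
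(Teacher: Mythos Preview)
Your proof is correct and follows essentially the same line as the paper's: both identify the eigenbasis $f_k(t)=e^{2\pi i(k-1/2)t}u$ of the anti-periodic operator $D_I$, check that $\V_+$ matches the polarization used to define $\H$, and show that the clutching transformation is the shift $T\colon f_k\mapsto f_{k+1}$, hence implemented by $\tau$. The paper phrases this using the family of multiplication operators $U_s$ directly rather than invoking the loop-group associated-bundle description of Section~\ref{subsec:loop}, but the computations are identical; your explicit verification of the $S^1$-weights in Step~4 is a useful addition that the paper leaves implicit. One small point of phrasing: in Step~1, ``$\P\SO(2)\to\SO(2)$ has $R$ itself as a section'' is not quite right---what you need (and use) is the lift $s\mapsto\gamma_s$, $\gamma_s(t)=R_{2\pi st}$, of the path $R$ to $\P\SO(2)$.
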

\begin{proof}
For $s\in \R$, let $A_s\in \SO(2)$ be the matrix of rotation by
$2\pi s$, and let $D_s$ be the skew-adjoint operator $\f{\p}{\p t}$ on
$L^2([0,1],\R^2)$ with boundary conditions $f(1)=-A_s f(0)$.  The
operator $D_0$ has an orthonormal system of eigenvectors $f_k,f_k^*,\ 
k\in\Z$ given by
\[  f_k(t)=e^{\tpi(k-\hh)t}u,\]
with $u=\f{1}{\sqrt{2}}(1,\i)$. The eigenvalues for $f_k,\,f_k^*$ are
$\pm \tpi(k-\hh)$. We see that the $+\i$ eigenspace of $J=D_0/|D_0|$
is given by 
\[ \V_+=\on{span}\{\cdots,f_3,f_2,f_1,f_0^*,f_{-1}^*,\cdots\}.\]
There is a unique isomorphism of $\Cl(\V)$-modules $\S_J\to \H$ taking
the `vacuum vector' $1\in \S_J=\ol{\wedge\V_+}$ to the `vacuum vector'
$f_0\wedge f_{-1}\wedge \cdots$.

For $s\in \R$, define orthogonal transformations $U_s\in \on{O}(\V)$,
where $U_s$ is pointwise multiplication by $t\mapsto A_{st}$. On $f_k$
the operator $U_s$ acts as multiplication by $e^{\tpi st}$, and on
$f_k^*$ as multiplication by $e^{-\tpi st}$. Hence
\[ f_k^{(s)}=U_s f_k,\ \ \ (f_k^{(s)})^*=U_s f_k^*\]
are the eigenvectors of $D_s$, with shifted eigenvalues
$\pm \tpi(k-\hh+s)$. The complex structure
\[ J_s=U_s J U_s^{-1}\]
differs from $J_{D_s}=\i\sign(-\i D_s)$ by a finite rank operator.
Hence, letting $\S_s$ denote the $\Cl(\V)$-module defined by $J_s$,
the fiber of $\A\to \SO(2)$ at $A(s)$ may be described as $\K(\S_s)$.
The orthogonal transformation $U_s$ extends to an orthogonal
transformation of $\ol{\wedge\V}$, taking $\S=\ol{\wedge\V_+}$ to
$\ca{S}_s=\ol{\wedge\V_{+,s}}$, where $\V_{\pm,s}=U_s \V_\pm$. Hence
each $\S_s$ is identified with $\S\cong \H$ as a Hilbert space (not as
a $\Cl(\V)$-module).  The identification $\K(\S_0)\cong \K(\S_1)$ is
given by the choice of any isomorphism of $\Cl(\V)$-modules $\S_0\to
\S_1$.  In terms of the identifications with $\H$, such an isomorphism
is given by an implementer of the orthogonal transformation $U_1$. The
proof is completed by the observation that $U_1=T$ (cf. \eqref{eq:t}),
which is implemented by $\tau$.
\end{proof}

We are now in position to outline an alternative argument for the
computation of the Dixmier-Douady class of $\A_{\SO(n)}$, Proposition
\ref{prop:ddclass}.  Note that $\A_{\SO(n)}$ is equivariant under the
conjugation action of $\SO(n)$.  One has $H^3_{\SO(n)}(\SO(n),\Z)=\Z$
for $n\ge 2,\ n\not=4$, and the natural maps to ordinary cohomology are
isomorphisms for $n\ge 3,\,n\not=4$. Similarly $H^1_{\SO(n)}(\SO(n),\Z_2)=\Z_2$
for $n\ge 2$, and the natural map to $H^1(\SO(n),\Z_2)$ is an
isomorphism. On the other hand, the map
$H^3_{\SO(n)}(\SO(n),\Z)\to H^3_{\SO(2)}(\SO(2),\Z)$ 
(defined by the inclusion $\SO(2)\hra \SO(n)$ as the upper left corner)
is an isomorphism for $n\ge 2,\ n\not=4$, and likewise for 
$H^1(\cdot,\Z_2)$.  It
hence suffices to check that the bundle over $\SO(2)$ has
\emph{equivariant} Dixmier-Douady class $(1,1)\in \Z\times\Z_2$. But
this is clear from our very explicit description of $\A_{\SO(2)}$.
\end{appendix}

\def\cprime{$'$} \def\polhk#1{\setbox0=\hbox{#1}{\ooalign{\hidewidth
  \lower1.5ex\hbox{`}\hidewidth\crcr\unhbox0}}} \def\cprime{$'$}
  \def\cprime{$'$} \def\polhk#1{\setbox0=\hbox{#1}{\ooalign{\hidewidth
  \lower1.5ex\hbox{`}\hidewidth\crcr\unhbox0}}} \def\cprime{$'$}
  \def\cprime{$'$}
\providecommand{\bysame}{\leavevmode\hbox to3em{\hrulefill}\thinspace}
\providecommand{\MR}{\relax\ifhmode\unskip\space\fi MR }
% \MRhref is called by the amsart/book/proc definition of \MR.
\providecommand{\MRhref}[2]{%
  \href{http://www.ams.org/mathscinet-getitem?mr=#1}{#2}
}
\providecommand{\href}[2]{#2}

\vskip1in

\end{document}